\documentclass{article}
\usepackage{graphicx} 
\usepackage{amsmath, amsthm, amssymb}
\usepackage{fullpage}
\usepackage[many]{tcolorbox}
\usepackage{xcolor}
\usepackage{wasysym}
\usepackage{mathtools}
\usepackage{mathrsfs}
\usepackage{enumitem}

\usepackage[pdftex,plainpages=false,hypertexnames=false,pdfpagelabels]{hyperref}
\definecolor{medium-blue}{rgb}{0,0,.8}
\hypersetup{colorlinks, linkcolor={purple}, citecolor={medium-blue}, urlcolor={medium-blue}}
\newcommand{\arxiv}[1]{\href{http://arxiv.org/abs/#1}{\tt arXiv:\nolinkurl{#1}}}
\newcommand{\arXiv}[1]{\href{http://arxiv.org/abs/#1}{\tt arXiv:\nolinkurl{#1}}}

\DeclareMathOperator{\FPdim}{FPdim}

\DeclareMathOperator{\Tr}{Tr}

\newcommand{\Rep}{\mathsf{Rep}}
\newcommand{\Fun}{\mathsf{Fun}}
\newcommand{\cZ}{\mathcal{Z}}
\newcommand{\unit}{\mathbf{1}}

\def\semicolon{;}
\def\applytolist#1{
    \expandafter\def\csname multi#1\endcsname##1{
        \def\multiack{##1}\ifx\multiack\semicolon
            \def\next{\relax}
        \else
            \csname #1\endcsname{##1}
            \def\next{\csname multi#1\endcsname}
        \fi
        \next}
    \csname multi#1\endcsname}

\def\calc#1{\expandafter\def\csname c#1\endcsname{{\mathcal #1}}}
\applytolist{calc}QWERTYUIOPLKJHGFDSAZXCVBNM;
\def\bbc#1{\expandafter\def\csname bb#1\endcsname{{\mathbb #1}}}
\applytolist{bbc}QWERTYUIOPLKJHGFDSAZXCVBNM;
\def\bfc#1{\expandafter\def\csname bf#1\endcsname{{\mathbf #1}}}
\applytolist{bfc}QWERTYUIOPLKJHGFDSAZXCVBNM;
\def\sfc#1{\expandafter\def\csname s#1\endcsname{{\sf #1}}}
\applytolist{sfc}QWERTYUIOPLKJHGFDSAZXCVBNM;
\def\fc#1{\expandafter\def\csname f#1\endcsname{{\mathfrak #1}}}
\applytolist{fc}QWERTYUIOPLKJHGFDSAZXCVBNM;
\def\rmc#1{\expandafter\def\csname rm#1\endcsname{{\mathrm #1}}}
\applytolist{rmc}QWERTYUIOPLKJHGFDSAZXCVBNM;

\numberwithin{equation}{section}

\theoremstyle{plain}
\newcounter{mytheorem}[section]

\newtheorem{thm}[mytheorem]{Theorem}
\newtheorem*{thm*}{Theorem}
\newtheorem{cor}[mytheorem]{Corollary}
\newtheorem{lem}[mytheorem]{Lemma}
\newtheorem{prop}[mytheorem]{Proposition}

\newtheorem*{claim*}{Claim}

\theoremstyle{definition}

\newtheorem*{trick*}{Trick}

\newtheorem{rem}[mytheorem]{Remark}

\newtheorem{conjecture}[mytheorem]{Conjecture}

\title{Classification of some $\bbZ/2\bbZ \times \bbZ/2\bbZ$-quadratic fusion categories \\of rank 6}
\date{}

\begin{document}
\author{Yue Meng and Zhiqiang Yu}
\maketitle
\begin{abstract}
A fusion category $\cC$ is said to be $\bbZ/2\bbZ \times \bbZ/2\bbZ$-quadratic if the group $G(\cC)$ of invertible objects is isomorphic  to $\bbZ/2\bbZ \times \bbZ/2\bbZ$, and the remaining simple objects form an orbit under the action of $G(\cC)$.
In this paper, we give a partial classification of $\bbZ/2\bbZ \times \bbZ/2\bbZ$-quadratic fusion categories of rank six.
More precisely,  we show that  its Grothendieck ring $\cK_0(\cC)$ must be one of nine fusion rings if the fusion rule multiplicities are less than $20$, and  the categorifications of five of them are previously known. We prove that   one of the last four fusion rings can be realized as  de-equivariantization of a near-group fusion category of type $\bbZ/2\bbZ \times \bbZ/4\bbZ+8$.

\end{abstract}

{\small\quad\textbf{Keywords}: Quadratic fusion category; Near-group fusion category; Modular fusion category}

{\small\quad\textbf{Mathematics Subject Classification (2020)} 18M20}

\section{Introduction}

Throughout this paper, we   work over an algebraically closed field $\bbK$ of characteristic $0$.

A fusion category $\cC$ is a semisimple $\bbK$-linear finite abelian tensor category, and we denote by $\cO(\cC)$ the set of isomorphism classes of simple objects of $\cC$.  The cardinality of $\cO(\cC)$ is called the rank of $\cC$. Fusion categories, generally, tensor categories connected deeply with representation theory of Hopf algebras and quantum groups, conformal field theory, low-dimensional topology, to name a few. Interested readers are referred to \cite{bakalovkirillov,drinfeld2010braided,egno2015,etingof2005fusion} and the references
therein for more details. One of the central problems in the research of fusion categories is their classifications.

We say a fusion ring  $R$ is categorifiable if there exists a fusion category $\cC$ such that the Grothendieck  ring $\cK_0(\cC)=R$, in this case  $\cC$ is also called a categorification of $R$.  For any finite group $G$, the categorifications of the group ring $\bbZ[G]$ are precisely  the pointed fusion categories $\text{Vec}_G^\omega$, the category of $G$-graded finite vectors spaces of $\bbK$, where $\omega\in Z^3(G,\bbK^*)$ is a $3$-cocycle. It is known that for a  fusion ring, there are
only finitely many categorifications  up to tensor equivalence \cite[Theorem 2.28]{etingof2005fusion}, this is the Ocneanu rigidity. However,  given an arbitrary fusion ring, the existence of categorification can be  incredibly difficult. For example, pivotal fusion categories  have only been classified up to rank $3$ \cite{ostrik2015pivotal}, the classification of pseudo-unitary non-self-dual fusion categories of rank $4$ is complete \cite{larson,MIP2025}.

One of the most studied classes of fusion categories is the $G$-quadratic  fusion category with $G$ being a finite group. A fusion category $\cC$ is $G$-quadratic if non-invertible simple objects form an orbit under action of  $G$  \cite{grossmanizumi}, where $G$ is the group of invertible objects of $\cC$. The near-group fusion categories \cite{siehler} is a special class of $G$-quadratic categories. More precisely, 
let $G$ be a finite group and  $k$ be a  non-negative integer, a near-group fusion category $\cC$ is  of type $G+k$ if $\cK_0(\cC)$ is determined by  following fusion relations\begin{align*}
gX =X=Xg,\quad X^2=\sum_{g\in G}g+ kX.
\end{align*} 
If $G$ is abelian and  $k=0$, it is Tambara-Yamagami categories \cite{tambarayamagami}. There is much literature  considered categorifications of near-group fusion rings, we refer the readers
to  \cite{izumi,schopieray2} and the references therein for details.  
Given a finite group $G$, the progress on the question of when a $G$-quadratic fusion ring is categorifiable is slow, one particular reason  is that these  fusion rings pass the necessary categorifiable conditions automatically, such as  $d$-number test and pseudo-unitary inequality, see \cite{ostrik2009formal,ostrik2015pivotal}. 
 In \cite{MIP2025}, the authors give a complete classification of unitary $\bbZ/2\bbZ$-quadratic fusion categories. The classification of $\bbZ/3\bbZ$-quadratic fusion categories of rank $4$, or a near-group fusion category of type $\bbZ/3\bbZ+k$, is solved by \cite{larson}.

In this paper, following \cite{larson,MIP2025}, we consider the categorification of $\bbZ/2\bbZ\times\bbZ/2\bbZ$-quadratic fusion categories of rank $6$. Notice that $\bbZ/2\bbZ\times\bbZ/2\bbZ$-quadratic fusion categories of rank $5$ are exactly near-group fusion categories of type $\bbZ/2\bbZ\times\bbZ/2\bbZ+k$, which have been classified in \cite{schopieray2}. Under some restrictions on the multiplicity of fusion rules, we show that there are at most $9$ categorifiable fusion rings. Moreover, five  of these fusion rings have known fusion categories associated with them, and we find a new fusion categories which is the de-equivariantization of a near-group fusion category of type $\bbZ/2\bbZ\times\bbZ/4\bbZ+8$.  The main results of this paper show the following:
\begin{thm}
Let $\cK_0(\cC)$ be the Grothendieck ring of $\bbZ/2\bbZ\times\bbZ/2\bbZ$-quadratic fusion category $\cC$ of rank $6$. If the fusion rule multiplicities are less than $20$, then $\cK_0(\cC)$ is one of the following:
\begin{itemize}[leftmargin=*, itemsep=2pt, parsep=2pt]
\item the Grothendieck ring of  $\mathrm{Vec}_{\bbZ/2\bbZ}^\omega\boxtimes \cA$, where $\cA$ is a near-group fusion category of type $\bbZ/2\bbZ+k$ with $k=0,1,2$, see Proposition \ref{notzero}.
\item the Grothendieck ring of the fusion category $\cC_{4,8,4}$, whose Drinfeld center is braided tensor equivalent to  $ \cC(\mathbb Z/2\mathbb Z\times \mathbb Z/2\mathbb Z ,\eta)\boxtimes \cC(\mathfrak{sl}_4,8)^0_{\mathbb Z/4\mathbb Z}$, see \textnormal{\cite[Theorem 5.2]{edie-michell}}.
\item the fusion ring with multiplication determined by
\begin{equation*}
g^2=h^2=\unit,\quad gh=hg,\quad g  X = Y = X g,\quad
X ^2 = \unit+ h + m X + m Y =
Y ^2,
\end{equation*}
where $m=2,4$.
\item the Grothendieck ring of fusion category $\cB$, which is a faithful $\bbZ/2\bbZ$-extension of pointed fusion subcategory $\mathrm{Vec}^{\,\omega}_{\mathbb Z/2\mathbb Z\times \mathbb Z/2\mathbb Z}$,
see Lemma \ref{R(0)}.
\item the Grothendieck ring of the de-equivariantization of near-group fusion category of type $\bbZ/2\bbZ \times \bbZ/4\bbZ+8$ by $\Rep(\bbZ/2\bbZ)$, see Corollary \ref{nonselfdual-categorification}.
\item the fusion ring with multiplication determined by
\begin{equation*}
g^2= h^2 = \unit, \quad gh=hg,\quad g X = X^*= Xg,\quad
X X^* = \unit+ h + 4 X +4 X^*= X^* X.  
\end{equation*}
\end{itemize}
\end{thm}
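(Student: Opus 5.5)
The plan is to pin down the general shape of the fusion ring $\cK_0(\cC)$ first, and only then use number-theoretic and categorical constraints to cut it down to the listed cases. Write $G(\cC)=\{\unit,g,h,gh\}\cong\bbZ/2\bbZ\times\bbZ/2\bbZ$. The two non-invertible simples form one $G$-orbit, so there is a stabilizer $H$ of order $2$. After relabelling, $H=\langle h\rangle$ and $X,Y=gX$. Hence $hX=X=Xh$ and $gX=Y$; left and right actions agree because the orbit has size two.

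The first step is the duality split. Either $X^*=X$ (and then $Y^*=Y$), or $X^*=Y$.

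In the self-dual case, $X\otimes X$ contains $\unit$ and $h$ but not $g$ or $gh$. So we can write
\[
X^2=\unit+h+aX+bY,\qquad XY=g+gh+bX+aY,
\]
where associativity and Frobenius reciprocity ($N_{XX}^Y=N_{XY}^X$) force this form. Then $Y^2=g^2X^2=X^2$.

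In the non-self-dual case, $X^*=Y=gX$, and Frobenius reciprocity gives
\[
XX^*=\unit+h+aX+aX^*,\qquad X^2=g+gh+cX+dX^*,
\]
with $N_{XX}^{X}=N_{XX^*}^{X^*}$ relations reducing the free parameters to two.

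Next, impose associativity, i.e.\ $(XX)X=X(XX)$ together with commutation with the $G$-action. This gives a quadratic Diophantine relation between the parameters, e.g.\ $a^2+b^2$ versus $2ab+\dots$ in the self-dual case, and forces either $a=b$ or a small list of exceptional shapes.

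The case $a=0$ or $b=0$ should be the degenerate one. Here the category is $\bbZ/2\bbZ$-graded with trivial component a near-group of type $\bbZ/2\bbZ+k$, or the adjoint subcategory is pointed. Proposition \ref{notzero} then identifies it with $\mathrm{Vec}^\omega_{\bbZ/2\bbZ}\boxtimes\cA$, $k\in\{0,1,2\}$, using the classification of $\bbZ/2\bbZ$-quadratic categories of \cite{MIP2025}. Similarly, Lemma \ref{R(0)} handles the case where $X\otimes X^*$ is pointed, producing the $\bbZ/2\bbZ$-extension $\cB$.

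For the remaining generic parameter family, we have $\FPdim X=d$ with $d^2=2+(a+b)d$. I would then apply the structure of the Drinfeld center. The Frobenius–Schur indicator and formal codegree tests of \cite{ostrik2009formal,ostrik2015pivotal} constrain the cases: the formal codegrees of the rank-$6$ fusion ring must be algebraic integers dividing $\FPdim\cC$, with the correct Galois behaviour. Computing the character table is feasible because the ring is commutative and carries an action of $\bbZ/2\bbZ\times\bbZ/2\bbZ$. This reduces the problem to one- or two-parameter families, which can be checked by computer for multiplicities $<20$. That check should leave exactly the rings listed: $\cC_{4,8,4}$, the $m=2,4$ family, the de-equivariantization case (Corollary \ref{nonselfdual-categorification}), and the non-self-dual ring with $4X+4X^*$.

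The main obstacle is this last elimination step. The formal codegree criteria are algebraic, so the bound $<20$ is needed to make the search finite. For each candidate parameter value one must check that every codegree $f$ satisfies $\FPdim\cC/f$ being an algebraic integer, and also the cyclotomic/Galois test. I would first try the codegree divisibility test alone, and use the Drinfeld-center rank bounds only on the few survivors.
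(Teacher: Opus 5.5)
You have a genuine gap at exactly the step you flag as the main obstacle, and there is an error before it.

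\textbf{Associativity gives no constraint.} Since $XY=g\,X^2$, one checks $(XX)Y=X(XY)$ identically. So $R(m,n)$ is a fusion ring for every $m,n\ge 0$. In the non-self-dual case $X^2=g\,XX^*=g+gh+aX+aX^*$, so there is only one parameter. In the paper, the restriction to $m=n$ when $m+n<10$ comes from requiring $\dim(A_j)=f_1/f_j$ to be algebraic integers, where the $A_j$ are the summands of $\cI(\unit)$.

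\textbf{The proposed tests do not shrink the families.} Take $m=n=2k$. The formal codegrees are $f_1,f_2$ (conjugates in $\bbQ(\sqrt{8+16k^2})$) together with $8,8,4,4$. The ratios $f_1/f_2=1+2kd$, $f_1/8=1+kd$ and $f_1/4=2+2kd$ all lie in $\bbZ[d]$. Everything lives in a real quadratic, hence abelian, field, and $f_1$ is a $d$-number. The same holds for $R'(2k)$ for every $k$. A computer search using codegree divisibility and the cyclotomic/Galois test therefore returns $R(6,6),\dots,R(18,18)$ and $R'(6),\dots,R'(18)$ as survivors, none of which is on the list. The paper's introduction makes exactly this point. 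Unspecified ``Drinfeld-center rank bounds'' do not close the gap.

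\textbf{Missing ingredient: the Frobenius--Schur indicator argument inside $\cZ(\cC)$.} The paper proceeds as follows.
\begin{enumerate}
\item It reduces to the pseudo-unitary case (Lemma \ref{pseudounitary}) and decomposes $\cI(\unit),\cI(g),\cI(h),\cI(gh)$.
\item For invertible $V$ it uses $\nu_n(V)\dim\cC=\Tr(\theta^n_{\cI(V)})$. This forces twist $-1$ on the new summands of $\cI(h)$, and twists all in $\{\pm1\}$ or all in $\{\pm i\}$ on the summands of $\cI(g)$ and $\cI(gh)$.
\item Adding the indicator identities for $X$ and $Y$ writes a real quadratic integer as a weighted sum of roots of unity. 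For example, with $q=m+n$ one gets $16+2q^2+2q\sqrt{8+q^2}=\sum_s(1-\theta_s^2)(z_s+z_s')^2$; when $\nu_2(g)=-1$ the analogue uses $\theta_s+\overline{\theta_s}$.
\item This is a sum of at most $2\sum_s(z_s+z_s')^2\le 32+\frac{15}{4}q^2$ roots of unity. The bound comes from $[\cI(X),\cI(X)]$, $[\cI(X),\cI(Y)]$, $[\cI(Y),\cI(Y)]$ and Cauchy--Schwarz on the known columns.
\item Larson's lower bound on the number of roots of unity needed for such an element then either bounds $q$ or forces a Pell equation. In the self-dual case this is $8+q^2=tv^2$ with $t\in\{2,3,6,11\}$; in the non-self-dual case it is $8+4m^2=tv^2$ with $t\in\{2,6\}$.
\end{enumerate}
The hypothesis that multiplicities are below $20$ only truncates these infinite Pell families, whose next solutions are $(18,20)$ and $m=22$. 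It is not there to make a brute-force search finite.
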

\begin{rem}
Although we have placed bounds in the 
fusion rule multiplicities, it is possible that we have listed all possibilities.  Namely, we believe that the Grothendieck ring of any $\bbZ/2\bbZ\times\bbZ/2\bbZ$-quadratic fusion category of rank $6$ must be one of  the above $9$ fusion rings.  And in  a follow-up paper \cite{mengyu}, we will classify $\bbZ/4\bbZ$-quadratic fusion categories of rank $6$, preliminary results show that similar conclusions also hold.
\end{rem}
The paper is organized as follows. In Section \ref{section-prels}, we recall some most used results on fusion categories. In Section \ref{section-selfdual}, we  prove that if a rank $6$ self-dual $\bbZ/2\bbZ\times\bbZ/2\bbZ$-quadratic fusion ring \ref{selfdual}   is categorifiable and $m+n<38$, then $(m,n)$ must be equal to one of $(0,0),(1,0),(0,1),(2,0),(0,2),(2,2),(4,4),(4,6),(6,4)$ (Theorem \ref{thm:PseudounitaryBoundselfdual}). In Section \ref{section-nonself-dual}, we prove that if  a rank $6$ non-self-dual $\bbZ/2\bbZ\times\bbZ/2\bbZ$-quadratic fusion ring   \ref{notselfdual} is categorifiable and $m<22$, then $m=0,2 \;\text{or}\;4$ (Theorem \ref{thm:PseudounitaryBoundnonselfdual}). In Section \ref{section-categorification}, we give categorifications of fusion rings $R(m,n)$ (Proposition \ref{selfdual-categorification}) and $R'(m)$ (Lemma \ref{R(0)}, Corollary \ref{nonselfdual-categorification}), respectively. 

\section{Preliminaries}\label{section-prels}
In this section, we recall some basic notions and properties of  fusion categories, braided fusion categories. For further details,
readers are referred to \cite{drinfeld2010braided,egno2015,etingof2005fusion}.

\subsection{Fusion categories}

Let $\cC$ be a fusion category.  Then  there is a unique ring  homomorphism  $\FPdim(-)$  from the Grothendieck ring $\cK_0(\cC)$ to $\bbK$ such that $\FPdim(X)\geq1$ is an algebraic integer for all objects $X\in\cK_0(\cC)$
\cite[Theorem 8.6]{etingof2005fusion}, and $\FPdim(X)$ is called the Frobenius-Perron dimension of object $X$. The Frobenius-Perron dimension $\FPdim(\cC)$ of fusion category $\cC$ is defined by
\begin{equation*}
\FPdim(\cC):=\sum_{X\in\cO(\cC)}\FPdim(X)^2.
\end{equation*}
We  say a fusion category $\cC$ is weakly integral if $\FPdim(\cC)\in\bbZ$; $\cC$ is integral if $\FPdim(X)\in\bbZ$ for all simple objects $X\in\cC$. For objects $X,Y\in\cC$, we   use the notation
\begin{equation*}
[X,Y]:=\dim_\bbK\mathrm{Hom}_\mathcal{C}(X,Y)
\end{equation*}
for brevity.

Let $\cC$ be a pivotal fusion category with a pivotal structure $j$, that is, $j$ is a natural isomorphism from identity tensor functor $\text{id}_\cC$ to the double dual tensor functor $(-)^{**}$.  Then the quantum dimension of an object $X$ is defined by the (categorical) trace of $\text{id}_X$, that is,
\begin{align*}
\dim_j(X)=\text{Tr}(\text{id}_X):=\text{ev}_X\circ(j_X\circ \text{id}_X)\otimes \text{id}_{X^*}\circ\text{coev}_X,
\end{align*}
here we suppress the associativity and   unit constraints  of $\cC$, $\text{ev}$ and $\text{coev}$ are the evaluation and coevaluation morphisms of $\cC$, respectively.  $\cC$ is a spherical  fusion category  if $\dim_j(X)=\dim_j(X^*)$ for all objects $X$ of $\cC$. Then we define the global (or categorical) dimension of the spherical fusion category $\cC$ as
\begin{equation*}
\dim(\cC):=\sum_{X\in\cO(\cC)}\dim_j(X)^2.
\end{equation*}
Notice that $\dim_j(-)$ induces a homomorphism from $\cK_0(\cC)$ to $\bbK$
\cite[Proposition 4.7.12]{egno2015}. A fusion category $\cC$ is said to be pseudo-unitary if $\text{dim}(\cC)=\FPdim(\cC)$. Any weakly integral fusion category is pseudo-unitary \cite{etingof2005fusion}, for example.

Let $\cC$ be a fusion category. Then the algebra $\cK_0(\cC)\otimes_\bbZ\bbK$ is semisimple \cite[Proposition 3.7.3]{egno2015}. Given an irreducible representation $\chi$ of $\cK_0(\cC)\otimes_\bbZ\bbK$,  the element
\begin{equation*}
\alpha_\chi=\sum_{X\in\cO(\cC)}\text{tr}_\chi(X)X^*
\end{equation*}
is central, where $\text{tr}_\chi(-)$ is the ordinary trace function on the representation $\chi$. Moreover, $\chi'(\alpha_\chi)=0$ if $\chi\ncong\chi'$ and $f_\chi:=\chi(\alpha_\chi)$ is a positive algebraic integer \cite{Lusztig}, $f_\chi$ is called a formal codegree of  $\cC$ \cite{ostrik2009formal}. For example, $\FPdim(\cC)$ and $\dim(\cC)$ are formal codegrees of $\cC$ determined by the homomorphisms $\FPdim(-)$ and $\dim(-)$, respectively. Moreover, for any formal codegree $f_\chi$ of spherical fusion category $\cC$,  there exists object
$A_{\chi} \in \cO (\cZ(\cC))$
such that $f_\chi =\frac{\dim(\cC)}{\dim(A_\chi )}$  and $[\unit_{\cC},\cF(A_\chi)]=[\cI(\unit_{\cC}),A_\chi]=\dim(\chi)$ by \cite[Theorem 2.13]{ostrik2015pivotal}, where $\cZ(\cC)$ is  the Drinfeld center of  $\mathcal C$, $\cF:\cZ(\cC)\to\cC$ is the forgetful functor and $\cI:\cC\to\cZ(\cC)$ is the adjoint functor of $\cF$.

\subsection{Modular fusion category and modular data}
Let $\cC$ be a braided spherical fusion category with a braiding $c$, that is, $\cC$ is a pre-modular fusion category.  Then $\cC$ is modular  
if its $S$-matrix is non-degenerate \cite{egno2015}, where $S=\operatorname{Tr}(c_{Y,X}c_{X,Y})$, $\forall X,Y\in\mathcal{O}(\mathcal{C})$. Equivalently,  $\mathcal C$ is modular if its M\"{u}ger center is trivial \cite{muger2003}.  
Let $T=(\delta_{X,Y}\theta_X)$, where $\theta_X$ is the scalar associated with the ribbon twist of $X\in\mathcal O(\mathcal C)$. 
The pair of $(S,T)$ is referred as the \textit{modular data}.

It is well-known that the S-matrix of a modular fusion category $\cC$ determines the multiplication of the Grothendieck ring $\cK_0(\cC)$ by the famous Verlinde formula \cite{egno2015} which states that for any objects $X, Y, Z\in\cO(\cC)$,
\begin{equation}
N_{X, Y}^Z=\frac{1}{\operatorname{dim}(\mathcal{C})} \sum_{W \in \mathcal O (\mathcal{C})} \frac{S_{X, W} S_{Y, W} S_{Z^\ast, W}}{S_{\unit_{\cC}, W}}\label{verlinde},
\end{equation}
where $N_{X, Y}^Z:= [X \otimes Y, Z]$.

Let $\mathcal C$ be modular tensor category with modular data $(S,T)$. 
Denote by $\operatorname{\mathbb{Q}}(S)$ and $\operatorname{\mathbb{Q}}(T)$ the smallest fields containing all entries of $S$ and $T$, respectively.
It is proved in \cite{donglinng} that $\mathbb{Q}(S)$ is a subfield of $\mathbb{Q}(T)$ and $\mathbb{Q}(T)=\mathbb{Q}_N:=\mathbb{Q}(\zeta_N)$, where $ \zeta_N $ is a primitive $N$-th root of unity and $N$ is the order of the $T$ matrix.  It follows from  
  \cite[Lemma 2.4]{muger2003} that
\begin{equation}
 \frac{S_{X,Y} S_{X,Z}}{S_{\unit_{\cC},X}^2}= \sum_{W\in \mathcal{O}(\mathcal C)}N_{Y,Z}^W\frac{S_{XW}}{S_{\unit_{\cC},X}},
\end{equation}
hence the map $ \chi_Y:X\mapsto \frac{S_{X,Y}}{S_{\unit_{\cC},Y}} $ defines a linear character of the Grothendieck ring  $ \mathcal{K}_0(\mathcal C) $.
Since $ S$  is non-degenerate, $ \{\chi_Y\}_{Y\in\mathcal O(\mathcal C)} $  forms all characters of $ \mathcal{K}_0(\mathcal C) $. Let $ \operatorname{Gal}(\mathbb{Q}(S)/\mathbb{Q}) $ be the Galois group of field extension $\mathbb{Q}(S)/\mathbb{Q}$, then for any $ \sigma\in \operatorname{Gal}(\mathbb{Q}(S)/\mathbb{Q}) $,  $ \sigma(\chi_Y) $  is also a linear character of $ \mathcal{K}_0(\mathcal C) $.  Thus, there is a unique permutation $\hat{\sigma}:\mathcal{O}(\mathcal{C})\to\mathcal{O}(\mathcal{C})$ such that $\sigma(\chi_Y) =\chi_{\hat{\sigma}(Y)}$, i.e.,
\begin{equation}
\sigma\left(\dfrac{S_{X,Y}}{S_{\unit_{\cC},Y}}\right)=\dfrac{S_{X,\hat{\sigma}(Y)}}{S_{\unit_{\cC},\hat{\sigma}(Y)}}, \quad \forall  X, Y \in \mathcal{O}(\mathcal{C}). 
\end{equation}
 By using the Verlinde formula (\ref{verlinde}), we have
\begin{equation}\label{twentyseven}
\dim(\hat{\sigma}(X))^2=\dfrac{\dim(\mathcal{C})}{\sigma(\dim(\mathcal{C}))}\sigma(\dim(X))^2,\quad \forall X\in\cO(\cC).
\end{equation}

\subsection{ \texorpdfstring{$\bbZ/2\bbZ\times \bbZ/2\bbZ$}{Z/2Z}-quadratic fusion categories of rank  six }
We call a spherical fusion category $\cC$ a $\bbZ/2\bbZ\times \bbZ/2\bbZ$-quadratic fusion category of rank six if  $G(\cC) = \{\unit_{\cC},g,h,gh\}=\bbZ/2\bbZ\times\bbZ/2\bbZ$ and $\cC$ has  one other orbit of simple objects under the $G(\cC)$-action.
It is easy to see that  the fusion rules are  commutative, and there are two cases:
\begin{enumerate}[label=(Q\arabic*), labelindent=1cm, leftmargin=1.5cm]
\item
\label{Q:4ObjectsNonSelfDual} $\cO(\cC)=\{\unit_{\cC},g,h,gh,X,Y\}$; 
fusion rules determined by: $g\otimes X=Y$,
$X^2 = \unit_{\cC}\oplus h \oplus m X \oplus n Y$.
\item
\label{Q:4ObjectsSelfDual} $\cO(\cC)=\{\unit_{\cC},g,h,gh,X,X^*\}$; 
fusion rules determined by: $g\otimes X=X^*$,
$X^2 = g \oplus gh \oplus m X \oplus n X^*$.
\end{enumerate}
\noindent 
Notice that  $X\otimes X^*=(X\otimes X^*)^*$, so $m=n$ in the latter case.

The following is a special case of \cite[Lemma 3.1]{schopieray2}, we include a proof here for latter use.
\begin{lem}\label{pseudounitary}
    Let $\cC$ be a $\bbZ/2\bbZ\times \bbZ/2\bbZ$-quadratic fusion category of rank six. Then $\cC$ is Galois conjugate to a pseudo-unitary fusion category.
\end{lem}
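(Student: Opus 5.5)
The plan is to show that the dimension homomorphism $\dim_j$ of $\cC$ is a Galois conjugate of $\FPdim$. Once this holds, there is $\sigma\in\mathrm{Gal}(\overline{\bbQ}/\bbQ)$ with $\dim_j(Z)=\sigma(\FPdim(Z))$ for every $Z\in\cO(\cC)$. Applying $\sigma^{-1}$ to the structure constants of $\cC$ (associators, and the pivotal structure) gives a Galois conjugate category $\cC^{\sigma^{-1}}$. Its categorical dimensions are $\sigma^{-1}(\dim_j(Z))=\FPdim(Z)$, so $\dim(\cC^{\sigma^{-1}})=\FPdim(\cC^{\sigma^{-1}})$, since the fusion rules and hence $\FPdim$ are unchanged. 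So $\cC^{\sigma^{-1}}$ is pseudo-unitary. The same reduction is used in \cite[Lemma 3.1]{schopieray2}.

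The first step is to compute all characters of the commutative ring $\cK_0(\cC)\otimes\bbK$ and see which ones can be dimension functions. Let $\chi$ be a character. In case \ref{Q:4ObjectsNonSelfDual} we have $g^2=h^2=\unit$, so $\chi(g),\chi(h)\in\{\pm1\}$ and $\chi(Y)=\chi(g)\chi(X)$.
\begin{itemize}
\item We use $hX=X$. This holds because $[X^2,h]=1$ gives $[hX,X]=1$, and $hX$ is simple. It forces $\chi(h)=1$ whenever $\chi(X)\neq0$.
\item The relation $X^2=\unit+h+mX+nY$ then gives $x^2=2+(m+\epsilon n)x$, where $x=\chi(X)$ and $\epsilon=\chi(g)$.
\end{itemize}
Case \ref{Q:4ObjectsSelfDual} is handled the same way using $X^2=g+gh+mX+mX^*$ and $XX^*=\unit+h+\cdots$.

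A categorical dimension function $\dim_j$ is a character with $\dim_j(\unit)=1$ and $\dim_j(Z)\neq0$ for all simple $Z$. Moreover $\dim_j$ of an invertible object is a root of unity, so $\dim_j(h)=1$. The important point is which value of $\epsilon$ occurs.

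The second step is to separate the two sign cases.
\begin{itemize}
\item If $\dim_j(g)=1$, then $d=\dim_j(X)$ is a root of $x^2-(m+n)x-2$. This is exactly the quadratic satisfied by $\FPdim(X)$. Hence $d$ is either $\FPdim(X)$ or its Galois conjugate $\frac{(m+n)-\sqrt{(m+n)^2+8}}{2}$, the conjugation being $\sqrt{(m+n)^2+8}\mapsto-\sqrt{(m+n)^2+8}$. If the discriminant is a square, both roots are still rational, and only the positive one is an algebraic-integer root of the monic polynomial. Some care is needed here: the negative root is rational and integral too, for example $m+n=1$ gives roots $2$ and $-1$.
\item The case $\dim_j(g)=-1$ cannot be excluded just by asking $\dim_j(X)$ to be a root of the twisted quadratic. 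Instead I would use sphericality, the formal codegree theory of the preliminaries, or simply the requirement that $\dim(\cC)=\sum\dim_j(Z)^2$ be a nonzero formal codegree, together with \cite[Theorem 2.13]{ostrik2015pivotal}.
\end{itemize}

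The cleanest route to that sign issue is to note that $\dim_j$ is one of the characters of $\cK_0(\cC)$. Its Galois orbit together with the positivity and integrality of $\dim(\cC)/\sigma(\dim(\cC))$ (from the formula relating formal codegrees to $\cZ(\cC)$) should pin it to the $\FPdim$ orbit.

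The main obstacle I expect is exactly this: ruling out, or absorbing into a Galois conjugate, dimension functions with $\dim_j(g)=-1$ or with a rational non-FP root. My first attempt would be the trick in \cite{schopieray2}. There $\dim_j(g)=-1$ is a legitimate pivotal choice, but changing the pivotal structure by the grading character of the $\bbZ/2\bbZ$-grading generated by $g$ turns it into $\dim_j(g)=1$. The resulting dimension function then falls into the first case of the second step, and the rational-root subcase is excluded by the positivity of $\dim(\cC)/\sigma(\dim(\cC))$.
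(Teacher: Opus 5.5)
Your character computation is sound, and it is equivalent to the paper's formal codegree computation. The codegree of a character $\chi$ is $\sum_Z\chi(Z)\chi(Z^*)$. So your characters with $\chi(h)=1$ and $\chi(g)=1$ (resp.\ $\chi(g)=-1$) give exactly the paper's $f_1,f_2$ (resp.\ $f_3,f_4$).

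The gap is that you never close the branch $\dim_j(g)=-1$ in the self-dual case with $m,n>0$, and this branch is the actual content of the lemma. The pivotal-twisting trick cannot close it. When $m,n>0$, both $X$ and $Y$ occur in $X\otimes X$, so $g\subset X\otimes Y$ lies in $\cC_{\mathrm{ad}}$. Hence $\cC_{\mathrm{ad}}=\cC$, the universal grading group is trivial, and there is no $\bbZ/2\bbZ$-grading to twist by. More fundamentally, pseudo-unitarity only concerns $\dim(\cC)=\sum_V|V|^2$, which does not depend on the pivotal structure. Here a $\pm1$-valued twist never changes $\sum_V\dim_j(V)^2$ anyway. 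So if $\dim(\cC)=8+(m-n)^2\pm(m-n)\sqrt{8+(m-n)^2}$ with $mn\neq0$, nothing moves it into the Galois orbit of $\FPdim(\cC)$, and this case has to be shown impossible.

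Your fallback criterion does not do that either. The ratio $f_3/f_4=1+\tfrac14\bigl((m-n)^2+(m-n)\sqrt{8+(m-n)^2}\bigr)$ is a positive algebraic integer of norm $1$ whenever $m-n$ is even. For example, $m=n$ gives $\dim(\cC)=8$ and ratio $1$. Sphericality only helps in the non-self-dual case, where $X^*=gX$ forces $\dim_j(g)=1$.

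The missing idea is to compare $\dim(\cC)$ with the \emph{other} Galois orbit, namely $\FPdim(\cC)$. The quotient $\dim(\cC)/\FPdim(\cC)$ is an algebraic integer (Etingof--Nikshych--Ostrik, Prop.~8.22; equivalently, Ostrik's theorem applied to the Frobenius--Perron character). We have $f_1f_2=8(8+(m+n)^2)$ and $f_3f_4=8(8+(m-n)^2)$. Taking norms then shows that $\dim(\cC)\in\{f_3,f_4\}$ is impossible unless $mn=0$, and when $mn=0$ one has $\{f_3,f_4\}=\{f_1,f_2\}$. This is exactly the paper's step.

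The same tool repairs your rational subcase $m+n=1$, $\dim_j(X)=-1$. There $\dim(\cC)=6$ and $\FPdim(\cC)=12$. Your proposed exclusion, via $\dim(\cC)/\sigma(\dim(\cC))$, fails here because that ratio is $1$. But $6/12$ is not an algebraic integer; alternatively, $\cC$ is integral and hence pseudo-unitary. Finally, as you yourself noticed, the intermediate claim that only the positive root is an algebraic integer is false.
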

\begin{proof}  We prove the lemma for the self-dual case, the other is same. Since $\cK_0(\cC)$ is commutative,
its irreducible representations are all one-dimensional. 
Let $M$ be matrix of the left multiplication by the element $\sum_{V\in\cO(\cC)}V^2$, then we have \begin{equation*}
	M= 
	\begin{bmatrix} 6&0&2&0&2m&2n\\ 0&6&0&2&2n&2m\\ 2&0&6&0&2m&2n\\ 0&2&0&6&2n&2m\\ 2m&2n&2m&2n&8+2m^2+2n^2&4mn\\ 2n&2m&2n&2m&4mn&8+2m^2+2n^2 \end{bmatrix}.
\end{equation*}
It follows from \cite[Lemma 2.6]{ostrik2009formal} that the formal codegrees of $\cC$ are precisely the roots of the characteristic polynomial  of $M$, which are  
\begin{align*}
	f_1 &= 8 + (m+n)^2 + (m+n)\sqrt{8 + (m+n)^2},\quad
	f_2 = 8 + (m+n)^2 - (m+n)\sqrt{8 + (m+n)^2},\\
	f_3 &= 8 + (m-n)^2 + (m-n)\sqrt{8 + (m-n)^2},\quad
	f_4 = 8 + (m-n)^2 - (m-n)\sqrt{8 + (m-n)^2},\\
	f_5 &=f_6 = 4.
\end{align*}
 Let $d:= \FPdim(X)$, then $d = \frac{1}{2} \left(m + n + \sqrt{8 + (m+n)^2}\right)$ and 
$f_1=\FPdim(\cC)$ as it is the largest. Obviously $\dim(\cC)\neq 4$. If $\dim(\cC)=f_1$ or $f_2$, then we are done. If $\dim(\cC)=f_3$ or $f_4$, then $\frac{\dim(\cC)}{\FPdim(\cC)}$  is an algebraic integer \cite[Proposition 8.22]{etingof2005fusion}, which then implies $\frac{N(\dim(\cC))}{N(\FPdim(\cC))}=\frac{8+(m-n)^2}{8+(m+n)^2}$ is an integer by \cite[Proposition 4.2]{schopieray1}, this can happen if and only if $m=0$ or $n=0$, we have $\dim(\cC)=\FPdim(\cC)$ or they are in the same Galois orbit. This completes the proof of the lemma.  
\end{proof}
Therefore, we always assume that $\bbZ/2\bbZ\times \bbZ/2\bbZ$-quadratic fusion category of rank six to be pseudo-unitary throughout  this paper.

\section{The Self-Dual Case}\label{section-selfdual}
Given a self-dual $\bbZ/2\bbZ \times \bbZ/2\bbZ$-quadratic  fusion category $\cC$ of rank six, let $\cO(\cC)=\{\unit_{\cC},g,h,gh,X,Y\}$  and fusion rules are determined by
\begin{align}
g^2=h^2=\unit_{\cC},\quad g \otimes X = Y = X \otimes g,\quad
X \otimes X &= \unit_{\cC}\oplus h \oplus m X \oplus n Y =
Y \otimes Y . \label{selfdual}\tag{$R(m,n)$}
\end{align}
Let us write \ref{selfdual} for such a fusion ring. 
\begin{lem}\label{lemfirst}
     Let $\cC$ be an integral fusion category such that $\cK_0(\cC)=$ \ref{selfdual}, then $m+n=1$. Moreover, $\cK_0(\cC)=\cK_0(\mathrm{Vec}^{\,\omega}_{\mathbb Z/2\mathbb Z} \boxtimes \Rep(S_3))$.
 \end{lem}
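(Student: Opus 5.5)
Since $\cC$ is integral, $d:=\FPdim(X)$ must be a positive integer. The plan is to turn this into a Diophantine condition on $m+n$, solve it, and then identify the fusion ring.

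\emph{Step 1: the dimension equation.} Apply $\FPdim$ to the relation $X\otimes X=\unit_{\cC}\oplus h\oplus mX\oplus nY$. Since $\FPdim(Y)=\FPdim(g)\FPdim(X)=d$, this gives $d^2=2+(m+n)d$. So $d=\frac12\bigl(s+\sqrt{s^2+8}\bigr)$ with $s:=m+n$, which agrees with the formula in the proof of Lemma \ref{pseudounitary}.

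\emph{Step 2: solve for $s$.} Integrality of $d$ requires $s^2+8$ to be a perfect square, say $s^2+8=t^2$ with $t\ge 0$. Then $(t-s)(t+s)=8$. Both factors have the same parity and $t+s>0$, so $\{t-s,t+s\}=\{2,4\}$. This forces $t=3$ and $s=1$, hence $d=2$. A shortcut is also available: $d$ divides $d^2-sd=2$, and $d\neq 1$ because $X$ is not invertible, so $d=2$ and $s=1$.

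\emph{Step 3: identify the ring.} We now have $(m,n)=(1,0)$ or $(0,1)$. The two cases are exchanged by swapping the labels $X\leftrightarrow Y$. Indeed, $Y\otimes Y=X\otimes X$, so in the new labels the relation reads $Y\otimes Y=\unit\oplus h\oplus nY\oplus mX$. Hence we may assume $(m,n)=(1,0)$, giving $X^2=\unit\oplus h\oplus X$.

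Next compare with $\cK_0(\mathrm{Vec}^{\,\omega}_{\bbZ/2\bbZ}\boxtimes\Rep(S_3))$. Its simples are $a^i\boxtimes V$, where $a$ generates $\bbZ/2\bbZ$ and $V\in\{\unit,\mathrm{sgn},\rho\}$. We have $\rho^2=\unit+\mathrm{sgn}+\rho$. Define the identification
\[
g\mapsto a\boxtimes\unit,\qquad h\mapsto \unit\boxtimes \mathrm{sgn},\qquad X\mapsto \unit\boxtimes\rho,\qquad Y\mapsto a\boxtimes\rho .
\]
Under this map $gX=Y$, the group $\{\unit,g,h,gh\}$ goes to $\bbZ/2\bbZ\times\bbZ/2\bbZ$, $hX=X$ because $\mathrm{sgn}\otimes\rho\cong\rho$, and $X^2$ maps to $\unit+h+X$. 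So the two rings coincide.

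The main obstacle is in Step 3: the labeled presentation $R(m,n)$ does not record $hX$. This relation has to be deduced from the given fusion rules. By Frobenius reciprocity, $[h\otimes X,X]=[h,X\otimes X^*]=[h,X\otimes X]=1$. Since $\FPdim(h\otimes X)=2=\FPdim(X)$, it follows that $h\otimes X=X$.
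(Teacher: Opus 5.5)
Your proposal is correct and follows the paper's route: the paper likewise factors $(t-(m+n))(t+(m+n))=8$ to force $t=3$ and $m+n=1$, and then observes that $X$ or $Y$ generates a subcategory with the fusion rules of $\Rep(S_3)$. You additionally spell out what the paper leaves implicit, namely the parity argument, the symmetry $R(m,n)\cong R(n,m)$, the explicit ring isomorphism, and the Frobenius-reciprocity argument that $h\otimes X\cong X$.
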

\begin{proof}
Let $t:=\sqrt{8+(m+n)^2}\in\bbZ$, equivalently, 
$(t-(m+n))(t+(m+n))=8$.
A direct computation shows $t\in\bbZ$  if and only if $t=3$, that is, $m+n=1$. In this case, the simple object $X$ or $Y$ generates a fusion subcategory which has the same fusion rules as $\Rep(S_3)$, this completes the proof.
\end{proof}

\begin{prop}\label{notzero}
If either $m$ or $n$ is zero, then  $(m,n)$ must be one of $(0,0),(0,1),(1,0),(0,2),(2,0)$. Therefore, $\cK_0(\cC)=\cK_0(\mathrm{Vec}^{\,\omega}_{\mathbb Z/2\mathbb Z} \boxtimes \cA)$, where $\cA$ is a near-group fusion category of type $\bbZ/2\bbZ+l$ with $l=0,1,2$.
\end{prop}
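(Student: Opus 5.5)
The plan is to reduce to a rank-$3$ near-group fusion subcategory and then quote the classification of rank-$3$ pivotal fusion categories. By the symmetry of the statement, we treat the cases $n=0$ and $m=0$ in parallel.

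\emph{Case $n=0$.} The relations read $X\otimes X=\unit_{\cC}\oplus h\oplus mX$.
\begin{itemize}
\item Since $h$ occurs in $X\otimes X$ and $X$ is self-dual, Frobenius reciprocity gives $[h\otimes X,X]=[h,X\otimes X]=1$. As $h\otimes X$ is simple, $h\otimes X=X$.
\item Hence $\{\unit_{\cC},h,X\}$ is closed under tensor products and duals, so it spans a fusion subcategory $\cA\subseteq\cC$ with $\cK_0(\cA)$ the near-group ring of type $\bbZ/2\bbZ+m$.
\end{itemize}

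\emph{Case $m=0$.} Now $Y\otimes Y=X\otimes X=\unit_{\cC}\oplus h\oplus nY$.
\begin{itemize}
\item The same argument gives $h\otimes Y=Y$.
\item So $\{\unit_{\cC},h,Y\}$ spans a near-group subcategory of type $\bbZ/2\bbZ+n$.
\item Replacing $X$ by $Y$ (note $g\otimes Y=X$), this case becomes the previous one.
\end{itemize}

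Next we bound the parameter. Since $\cC$ is spherical (indeed pseudo-unitary by Lemma \ref{pseudounitary}), the rank-$3$ subcategory $\cA$ is a pivotal fusion category. Ostrik's classification of pivotal fusion categories of rank $3$ \cite{ostrik2015pivotal} shows that the near-group ring $X^2=\unit+h+kX$ is categorifiable only for $k=0,1,2$. The three cases are realized by Ising type, $\Rep(S_3)$, and the $\bbZ/2\bbZ+2$ near-group (an $SO(3)$-type category). This gives $(m,n)\in\{(0,0),(1,0),(2,0),(0,1),(0,2)\}$.

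For the description of $\cK_0(\cC)$, take the case $n=0$. The basis $\{\unit_{\cC},g\}\times\{\unit_{\cC},h,X\}$, that is $\{\unit_{\cC},h,X,g,gh,Y=gX\}$, exhibits $\cK_0(\cC)\cong\bbZ[\bbZ/2\bbZ]\otimes\cK_0(\cA)$ as rings. The only relation to check is
\[
Y\otimes Y=g^2X^2=X\otimes X,
\]
which holds by \eqref{selfdual}, and the remaining products follow from commutativity and $g^2=\unit_{\cC}$. This tensor product is exactly $\cK_0(\mathrm{Vec}^{\,\omega}_{\bbZ/2\bbZ}\boxtimes\cA)$.

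The main obstacle is the bound $k\le 2$. Everything else is bookkeeping, so the proof rests on correctly invoking the rank-$3$ classification, which requires $\cA$ to inherit a pivotal structure from $\cC$.

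If one wanted to avoid citing \cite{ostrik2015pivotal}, an alternative route would go through formal codegrees. For the rank-$3$ ring, compute the formal codegrees via \cite[Lemma 2.6]{ostrik2009formal} and require them to be compatible with \cite[Theorem 2.13]{ostrik2015pivotal} in $\cZ(\cA)$. I expect this alternative to be considerably harder.
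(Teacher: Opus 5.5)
Your proposal is correct and follows the same route as the paper. When one of $m,n$ vanishes, the fusion rules yield a rank-$3$ near-group subcategory of type $\bbZ/2\bbZ+k$, generated by $X$ or $Y$. Ostrik's classification of pivotal rank-$3$ fusion categories \cite[Theorem 1.1]{ostrik2015pivotal} then forces $k\in\{0,1,2\}$. You also make explicit what the paper leaves implicit: $h\otimes X=X$ by Frobenius reciprocity, the pivotal structure inherited from the spherical category $\cC$, and the ring isomorphism $\cK_0(\cC)\cong\bbZ[\bbZ/2\bbZ]\otimes\cK_0(\cA)$.
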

\begin{proof}
If $m=0$, then the fusion rule \ref{selfdual} implies that $\cC$ contains a near-group fusion subcategory  of type $\bbZ/2\bbZ+n$, which is generated by the simple object $Y$. It follows from \cite[Theorem 1.1]{ostrik2015pivotal} that $n\in\{0,1,2\}$. The argument for $n=0$ is same. 
\end{proof}

In the following, we always assume 
$m\neq0$ and
$n\neq 0$.

\subsection{Computation of the bi-adjoint functor}\label{subsection-decomposition}
In this subsection, we compute the bi-adjoint pair $(\cF,\cI)$:
$\cZ(\cC) \underset{\cI}{\overset{\cF}{\rightleftarrows}} \cC$. For any fusion category $\cC$, 
recall that 
\begin{align}
\cF(\cI(Y)) = \bigoplus_{X\in \cO(\cC)} X\otimes Y\otimes X^*, \; \forall \; Y\in\cO(\cC). \label{FIY}
\end{align}
For all $X\in\mathcal{C}$ and $Z\in\mathcal{Z}(\mathcal{C})$, the adjointness condition implies 
$
[\cI(X),Z]=[X,\cF(Z)]$. The following lemma is a direct result of 
\cite[Theorem 2.13]{ostrik2015pivotal} and Lemma \ref{pseudounitary}.
\begin{lem}\label{lem:Ostrik2.13}
Denote by
$
r := \sqrt{\frac{8 + (m+n)^2}{8 + (m-n)^2}}
$.  There are distinct simple objects 
$ \unit_{\mathcal{Z}(\cC)},A_1,A_2,A_3,A_4,A_5 \in \cO(\cZ(\cC))$
such that 
\begin{equation*}
\cI(\unit_\cC) = \unit_{\mathcal{Z}(\cC)} \oplus A_1\oplus A_2\oplus A_3\oplus A_4\oplus A_5,
\end{equation*}

\begin{align}
\dim(A_1)=\frac{f_1}{f_2} = 1 + \frac{(m+n)}{2}d = \frac{\dim(\cC)}{4}-1,\label{fpdima}
\end{align}

\begin{align}
\dim(A_2) &=\frac{f_1}{f_3} 
=1 + \frac{1}{4}(m+n)d - \frac{r}{4}(m-n)d,\label{fpdimb}
\\
\dim(A_3) &=\frac{f_1}{f_4} 
=1 + \frac{1}{4}(m+n)d + \frac{r}{4}(m-n)d,\label{fpdimc}
\\
\dim(A_4) &=\frac{f_1}{f_5}=2 + \frac{(m+n)}{2}d,\label{fpdimd}
\\
\dim(A_5) &=\frac{f_1}{f_6}=2 + \frac{(m+n)}{2}d.\label{fpdime}
\end{align}
\end{lem}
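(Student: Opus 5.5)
The plan is to apply \cite[Theorem 2.13]{ostrik2015pivotal} directly to the formal codegrees computed in the proof of Lemma \ref{pseudounitary}. By that lemma we may assume $\cC$ is pseudo-unitary, so $\dim(\cC)=\FPdim(\cC)=f_1$. Since $\cK_0(\cC)$ is commutative, every irreducible representation $\chi$ is one-dimensional. Hence each formal codegree $f_\chi$ produces a simple object $A_\chi\in\cO(\cZ(\cC))$ with $[\cI(\unit_\cC),A_\chi]=1$ and $\dim(A_\chi)=f_1/f_\chi$.

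The next point is that the six one-dimensional characters are distinct, so by \cite[Theorem 2.13]{ostrik2015pivotal} the summands $A_\chi$ of $\cI(\unit_\cC)$ are pairwise non-isomorphic. Moreover, $\cI(\unit_\cC)=\bigoplus_\chi A_\chi$ exactly. To see this, compare dimensions: $\dim\cI(\unit_\cC)=\dim(\cC)=f_1$, and $\sum_\chi f_1/f_\chi=f_1\sum_\chi 1/f_\chi=f_1$, because $\sum_\chi \dim(\chi)/f_\chi=1$ (the unit is $\sum_\chi e_\chi$ with $\tau(e_\chi)=\dim(\chi)/f_\chi$, where $\tau$ is the coefficient of $\unit$). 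The character $\FPdim$ gives $f_1$, so its object has dimension $1$. It is the unique summand with $[\unit_{\cZ(\cC)},\cI(\unit_\cC)]=[\unit,\cF(\unit)]=1$, so it is $\unit_{\cZ(\cC)}$. The remaining objects are labelled $A_1,\dots,A_5$, corresponding to $f_2,\dots,f_6$.

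What remains is to simplify the ratios, using $d=\frac12(s+\sqrt{8+s^2})$ with $s=m+n$, so that $d^2=sd+2$. Write $u=\sqrt{8+s^2}$; then $f_1=2d^2+4$. For \eqref{fpdima}, $f_1f_2=(8+s^2)^2-s^2u^2=8u^2$, so $f_1/f_2=f_1^2/(8u^2)$. Substituting $u=2d-s$ and $d^2=sd+2$ should reduce this to $1+\frac{s}{2}d$, which also equals $\frac{f_1}{4}-1=\frac{d^2+2}{2}-1$. For \eqref{fpdimd} and \eqref{fpdime}, directly $f_1/4=\frac{2d^2+4}{4}=1+\frac{sd+2}{2}=2+\frac s2 d$.

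For $A_2$ and $A_3$, put $s'=m-n$ and $u'=\sqrt{8+s'^2}$, so $r=u/u'$. Then $f_3f_4=8u'^2$, and $f_1/f_3=f_1f_4/(8u'^2)$. Expand $f_1f_4=(8+s^2+su)(8+s'^2-s'u')$ and divide by $8u'^2$. To match the stated form I will rewrite everything in terms of $d$, using $ud=\frac{u(s+u)}{2}$ and $r u'=u$. The cleaner route is to verify the claimed expression: check that $(1+\frac14 sd\mp\frac r4 s'd)\,f_{3,4}=f_1$, using $f_{3,4}=u'^2\pm s'u'$ and $d=\frac{s+u}{2}$. The terms should split into rational and $\sqrt{\cdot}$ parts that match separately.

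The main obstacle is this last algebraic verification, because two independent square roots, $u$ and $u'$, appear, and the cross terms must cancel exactly. I would expand $(4+sd-rs'd)(u'^2+s'u')$ and use $rs'du'=s'du$ together with $du=\frac{su+u^2}{2}$, then collect terms. The computations for $A_1$, $A_4$ and $A_5$ are routine.
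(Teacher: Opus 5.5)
Your proposal is correct and follows the paper's route: the paper obtains the lemma directly from \cite[Theorem 2.13]{ostrik2015pivotal} applied to the formal codegrees of Lemma~\ref{pseudounitary} under pseudo-unitarity, and your algebraic checks fill in what the paper leaves implicit. These checks are $f_1=2d^2+4$, $f_1/f_2=d^2/2$, and $(4+sd-rs'd)f_3=4f_1$ once the $s'u'$ terms cancel, and they do go through. The one step to state more carefully is identifying $\unit_{\cZ(\cC)}$ with the summand attached to the character $\FPdim$: this uses that $f_1$ strictly exceeds $f_2,\dots,f_6$, which holds under the standing assumption $m,n\neq 0$.
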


\begin{rem}
\label{rem:K2abBetaConditions}
Since $\dim(A_j)\in \bbZ[d]$ for $1\leq j\leq 5$, $m\neq 0,n\neq0$, there can be only two possibilities:
\begin{itemize}
    \item $m=n$ and $4 \mid (m+n)$, or
    \item $m \neq n$, $r \in \mathbb{Q}(d)$, and $2 \mid (m+n)$.
\end{itemize}
Assume $ m \neq n$ below. Let 
$r = k_1 + k_2\sqrt{8+(m+n)^2}$,
where $k_1, k_2 \in \mathbb{Q}$.
Then 
\begin{equation*}
r^2 = \frac{8+(m+n)^2}{8+(m-n)^2} = k_1^2 + k_2^2(8+(m+n)^2) + 2k_1k_2\sqrt{8+(m+n)^2},
\end{equation*}
which means
$k_1k_2 = 0$ as $\sqrt{8+(m+n)^2} \notin \mathbb{Q}$. If $k_1=0$,  then $\frac{1}{8+(m-n)^2}=k_2^2$, so $|m-n|=1$, which contradicts $2\mid (m+n)$. Hence $k_2=0$ and $r \in \mathbb{Q}$. Write
$r=\frac{p}{q}$,  where \(p,q\in\mathbb Z_{>0}\) and \(\gcd(p,q)=1\).

Let
$a:=m+n, b:=m-n$, so $r=\sqrt{\frac{a^2+8}{b^2+8}}$. Since 
$\frac14(a-rb)\in \mathbb Z$ by Equation \eqref{fpdimb}, there exists \(t\in \mathbb Z\) such that $a-rb=4t$. Then we have
\begin{equation}
 qa-pb=4qt,
\label{r1}   
\end{equation}
we obtain
$pb\equiv0\pmod q$, which means $q\mid b$ as $\gcd(p,q)=1$. 
Assume $b=qs$ for some \(s\in\mathbb Z\), then $r^2=\frac{a^2+8}{b^2+8}=\frac{a^2+8}{q^2s^2+8}$, that is, 
$p^2(q^2s^2+8)=q^2(a^2+8)$, hence 
$8p^2\equiv0\pmod{q^2}$, so $q^2\mid 8$.
Therefore $q=1$  or   $q=2$. 
If \(q=2\), then \(p\) is odd and $ p^2(s^2+2)=a^2+8$. 
Meanwhile, Equation \eqref{r1}   means 
$ a-ps=4t$, so we have
$p^2=4pst+8t^2+4$, which implies \(2\mid p\), it is impossible as $p$ is odd. 
Therefore, \(q=1\) and 
$r=p\in\mathbb Z$.

It is straightforward to verify that if $0 \neq m \neq n \neq 0$, $r \in \bbZ$ and $2\mid (m+n)$, then $m+n \geq 10$. Hence, when $m+n<10$,  we have $m=n$ and $4\mid (m+n)$. Moreover, if  \( m + n = 10 \),  the only possible pairs \((m, n)\) are \((4, 6)\) and \((6, 4)\); if $m+n=38$, the only possible pairs \((m, n)\) are \((18, 20)\) and \((20, 18)\). The pairs \((18, 20)\) and \((20, 18)\) are the second smallest pair satisfying  $0 \neq m \neq n \neq 0$, $r \in \bbZ$ and $2\mid (m+n)$.
\end{rem}

\begin{prop}\label{biao}
The Drinfeld center $\mathcal{Z}(\cC)$ has  distinct simple objects 
$\unit_{\mathcal{Z}(\cC)},A_1,A_2,A_3,\dots$,
such that
\begin{align}
    \cI(\unit_\cC) &= \unit_{\mathcal{Z}(\cC)} \oplus A_1\oplus A_2\oplus A_3\oplus A_4\oplus A_5, 
        &&\text{and} & 
    \cI(g) &= A_6\oplus A_7\oplus A_8\oplus A_9\oplus A_{10}\oplus A_{11},\\
    \cI(h) &= A_4 \oplus A_5\oplus A_{12}\oplus A_{13}\oplus A_{14}\oplus A_{15}, 
        &&\text{and} & 
    \cI(gh) &= A_{10}\oplus A_{11}\oplus A_{16}\oplus A_{17}\oplus A_{18}\oplus A_{19}.
\end{align}
Denote the rest of the simple objects of $\mathcal{Z}(\cC)$ by $\{Z_s\}_{s\in \cS}$ where $\cS$ is a finite set.
The matrix $W$ of the forgetful functor $\cF:\mathcal{Z}(\cC) \to \cC$ can then be represented as follows, where zero entries are omitted:
\[
\setlength{\arraycolsep}{3.5pt}
\begin{array}{c|cccccc|cccccc|cccc|cccc|c}
& \unit_{\mathcal{Z}(\cC)} & A_1 & A_2 & A_3 & A_4 & A_5 & A_6 & A_7 & A_8 & A_9 & A_{10} & A_{11} & A_{12} & A_{13} & A_{14} & A_{15} & A_{16} & A_{17} & A_{18} & A_{19} & Z_s
\\\hline
\unit_\cC & 1 & 1 & 1 & 1 & 1 & 1 &&&&& &&&&& &&&&&
\\
g & &&&& && 1 & 1 & 1 & 1 &1 &1 &&&& &&&&
\\
h &&&&&1&1&&&& &&&1 &1 &1 &1 &&&&
\\
gh &&&& & &&&&&&1 &1&&&& &1 &1 &1 &1
\\\hline
X &&x_1 &x_2 &x_3 &x_4 &x_5 &x_6 &x_7 &x_8 &x_9 &x_{10} &x_{11} &x_{12} &x_{13} &x_{14} &x_{15} &x_{16} &x_{17} &x_{18} &x_{19} &z_s
\\
Y &&y_1 &y_2 &y_3 &y_4 &y_5 &y_6 &y_7 &y_8 &y_9 &y_{10} &y_{11} &y_{12} &y_{13} &y_{14} &y_{15} &y_{16} &y_{17} &y_{18} &y_{19}&z'_s
\end{array}
\vspace*{-\smallskipamount}
\]
and the induction matrix is given by $W^T$.
Moreover, the following equalities are satisfied:
\begin{align}
x_1 + y_1=x_4+y_4=x_5+y_5 &= \frac{m+n}{2},
\label{eqn:HorizontalSumsAB4}
\\
x_2 + y_2= \frac{1}{4}(m+n) - \frac{r}{4}(m-n), \quad
x_3 + y_3 &= \frac{1}{4}(m+n) + \frac{r}{4}(m-n),
\label{eqn:HorizontalSumsAB3}
\end{align}
\vspace{-0.3cm}
\begin{alignat}{2}
\sum_{i=1}^5x_i&= 2m \quad &&\text{ and }  \quad \sum_{i=1}^5y_i = 2n,\label{eqn:VerticalSumsAB1}
\\
\sum_{i=6}^{11}x_i &= 2n \quad &&\text{ and }  \quad \sum_{i=6}^{11}y_i= 2m,
\label{eqn:VerticalSumsAB2}
\\
\sum_{i=4}^5x_i+\sum_{i=12}^{15}x_i &= 2m \quad &&\text{ and }  \quad \sum_{i=4}^5y_i+\sum_{i=12}^{15}y_i = 2n,\label{eqn:VerticalSumsAB3}
\\
\sum_{i=10}^{11}x_i+\sum_{i=16}^{19}x_i&= 2n \quad &&\text{ and }  \quad \sum_{i=10}^{11}y_i+\sum_{i=16}^{19}y_i = 2m.\label{eqn:VerticalSumsAB4}
\end{alignat}
\end{prop}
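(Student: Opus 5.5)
The plan is to compute $\cF\cI(V)$ for each invertible $V\in\{\unit_\cC,g,h,gh\}$ from formula \eqref{FIY}, and then use the adjunction $[\cI(V),\cI(V')]=[V,\cF\cI(V')]$ to read off how the inductions split and overlap.

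\textbf{Step 1: compute $\cF\cI(V)$.} Each invertible $u$ contributes $u\otimes V\otimes u^*=V$, so the invertibles contribute $4V$ in total. The non-invertibles contribute $X\otimes V\otimes X+Y\otimes V\otimes Y=2V\otimes X^{2}$. Hence
\begin{align*}
\cF\cI(\unit_\cC)&=6\cdot\unit_\cC+2h+2mX+2nY,\\
\cF\cI(h)&=6h+2\cdot\unit_\cC+2mX+2nY,\\
\cF\cI(g)&=6g+2gh+2nX+2mY,\\
\cF\cI(gh)&=6gh+2g+2nX+2mY,
\end{align*}
where we used $g\otimes X=Y$ and $g\otimes Y=X$.

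\textbf{Step 2: the shape of the decompositions.} The adjunction gives $[\cI(V),\cI(V)]=6$ for every invertible $V$. Since $\cI(V)$ has six simple summands counted with multiplicity, each $\cI(V)$ is multiplicity-free with six distinct simple summands; for $V=\unit_\cC$ this is consistent with Lemma \ref{lem:Ostrik2.13}. Similarly $[\cI(\unit_\cC),\cI(h)]=2$ and $[\cI(g),\cI(gh)]=2$, while every other pair of distinct invertibles gives $0$. So $\cI(\unit_\cC)$ and $\cI(h)$ share exactly two summands, $\cI(g)$ and $\cI(gh)$ share exactly two summands, and there are no other overlaps. This yields the $19$ nontrivial objects $A_i$, labelled as in the statement. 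The invertible rows of $W$ follow from $[V,\cF(A)]=[\cI(V),A]$. The remaining simples $Z_s$ appear in no $\cI(V)$, so their images under $\cF$ contain no invertibles.

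\textbf{Step 3: which summands of $\cI(\unit_\cC)$ are shared with $\cI(h)$.} This is the main point: we must show the shared ones are exactly the $A_4,A_5$ of Lemma \ref{lem:Ostrik2.13}. For $A_j\subset\cI(\unit_\cC)$ with $j\ge1$, write
\[
\cF(A_j)=\unit_\cC+\varepsilon_j h+x_jX+y_jY,\qquad \varepsilon_j\in\{0,1\}.
\]
Pseudo-unitarity (Lemma \ref{pseudounitary}) gives $\dim A_j=1+\varepsilon_j+(x_j+y_j)d$. Since $m,n\ge1$, Lemma \ref{lemfirst} shows $d$ is irrational, and $d$ satisfies $d^2=(m+n)d+2$. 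So each element of $\bbQ(d)$ has a unique expression $\alpha+\beta d$ with $\alpha,\beta\in\bbQ$.

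Compare with the formulas \eqref{fpdima}--\eqref{fpdime}. By Remark \ref{rem:K2abBetaConditions}, $r\in\bbZ$ when $m\ne n$, and the $r$-term vanishes when $m=n$. Hence $\dim A_1,\dim A_2,\dim A_3$ have constant term $1$, while $\dim A_4,\dim A_5$ have constant term $2$. Matching constant terms forces $\varepsilon_1=\varepsilon_2=\varepsilon_3=0$ and $\varepsilon_4=\varepsilon_5=1$. So $A_4,A_5$ are precisely the common summands of $\cI(\unit_\cC)$ and $\cI(h)$. Matching the $d$-coefficients then gives \eqref{eqn:HorizontalSumsAB4} and \eqref{eqn:HorizontalSumsAB3}.

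For $\cI(g)$ and $\cI(gh)$ no dimension information is needed. We simply name the two common summands $A_{10},A_{11}$.

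\textbf{Step 4: the vertical sums.} Use $\sum_{A\subset\cI(V)}[X,\cF(A)]=[X,\cF\cI(V)]$, and the same with $Y$, together with Step 1. For $V=\unit_\cC$ this gives $\sum_{i=1}^5x_i=2m$ and $\sum_{i=1}^5y_i=2n$, using $\cF(\unit_{\cZ(\cC)})=\unit_\cC$. The identities for $\cI(g)$, $\cI(h)$ and $\cI(gh)$ follow identically from the coefficients $2n,2m$, resp. $2m,2n$, resp. $2n,2m$ computed in Step 1.
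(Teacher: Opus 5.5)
The gap is in Step 2. You assert that each $\cI(V)$ ``has six simple summands counted with multiplicity'', but nothing supports this for $V=g,h,gh$. Lemma~\ref{lem:Ostrik2.13} (Ostrik's theorem plus commutativity of $\cK_0(\cC)$) describes only $\cI(\unit_\cC)$. Adjunction gives only $[\cI(V),\cI(V)]=\sum_A[\cI(V),A]^2=6$. This allows the multiplicity pattern $(2,1,1)$ as well as $(1,1,1,1,1,1)$, and nothing you use controls $\sum_A[\cI(V),A]$.

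Nor does this resolve itself automatically. $\mathrm{End}(\cI(g))$, of dimension $[g,\cF\cI(g)]=6$, is a semisimple algebra that need not be commutative even when $\cK_0(\cC)$ is. For example, in $\mathrm{Vec}^{\omega}_{(\bbZ/2\bbZ)^3}$ with a type-III cocycle, $\cI(\unit)$ is multiplicity free while $\cI(g)=2A\oplus 2B$ for every $g\neq e$. So configurations such as $\cI(g)=2A_6\oplus A_{10}\oplus A_{11}$, $\cI(h)=2A_{12}\oplus A_4\oplus A_5$, or $\cI(gh)=2A_{16}\oplus A_{10}\oplus A_{11}$ are not excluded. In them the vertical sums change; e.g.\ \eqref{eqn:VerticalSumsAB2} becomes $2x_6+x_{10}+x_{11}=2n$.

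The paper's proof does not claim more than it can show here. It extracts exactly this dichotomy from $[\cI(V),\cI(V)]=6$ and the overlap numbers. The remark after the proposition then says the displayed decomposition is one of eight versions, taken as representative because the later estimates go through in each.

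To make your argument correct, you must either rule out the multiplicity-two patterns by a further argument, which neither you nor the paper supplies, or state the result as a case split with correspondingly modified sums. For the pair $g,gh$, a complete split should also allow $[\cI(g),\cI(gh)]=2$ to come from a single common summand occurring twice in one of the two inductions. Neither argument addresses that possibility.

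The rest of your proposal is sound. Step 1 agrees with the paper. Your constant-term comparison in Step 3 is a real improvement: it shows that the summands shared by $\cI(\unit_\cC)$ and $\cI(h)$ are exactly $A_4,A_5$, each occurring once, whatever the shape of $\cI(h)$, since a summand $A_j$ with $[h,\cF(A_j)]=2$ would have constant term $3$. The paper leaves this point implicit.
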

\begin{proof}
 The decomposition of $\cI(\unit_{\cC}) $ is obtained in Lemma \ref{lem:Ostrik2.13}.  
Next, by Equation \eqref{FIY}, we have
\begin{align*}
\cF(\cI(\unit_{\cC}))&=6\cdot\unit_{\cC}\oplus2h\oplus2mX\oplus2nY, \quad
\cF(\cI(g))=6g\oplus2gh\oplus2nX\oplus2mY, \label{fi1} \\
\cF(\cI(h))&=2\cdot\unit_{\cC}\oplus6h\oplus2mX\oplus2nY, \quad
\cF(\cI(gh))=2g\oplus6gh\oplus2nX\oplus2mY.
\end{align*}
Equations \eqref{eqn:HorizontalSumsAB4} and \eqref{eqn:HorizontalSumsAB3} are consequences of Equations \eqref{fpdima}, \eqref{fpdimb}, \eqref{fpdimc}, \eqref{fpdimd}, and \eqref{fpdime}. Equation \eqref{eqn:VerticalSumsAB1} follows from  $[\cI(\unit_{\cC}),\cI(X)]=[\cF(\cI(\unit_{\cC})),X]=2m$ and $[\cI(\unit_{\cC}),\cI(Y)]=[\cF(\cI(\unit_{\cC})),Y]=2n$. Combining $[\cI(g),\cI(g)]=[\cF(\cI(g)),g]=6$ and $[\cI(g),\cI(\unit_{\cC})]=[\cF(\cI(g)),\unit_{\cC}]=0$, we obtain
\begin{equation*}
\cI(g) = A_6\oplus A_7\oplus A_8\oplus A_9\oplus A_{10}\oplus A_{11} \quad \text{or} \quad \cI(g) = 2A_6\oplus A_{10}\oplus A_{11}.
\end{equation*}
Equation \eqref{eqn:VerticalSumsAB2} is then verified by $[\cI(g),\cI(X)]=[\cF(\cI(g)),X]=2n$ and $[\cI(g),\cI(Y)]=[\cF(\cI(g)),Y]=2m$.
Similarly, the identities $[\cI(h),\cI(h)]=[\cF(\cI(h)),h]=6$, $[\cI(h),\cI(g)]=[\cF(\cI(h)),g]=0$, and $[\cI(h),\cI(\unit_{\cC})]=[\cF(\cI(h)),\unit_{\cC}]=2$ imply
\begin{align*}
\cI(h) = A_4 \oplus A_5\oplus A_{12}\oplus A_{13}\oplus A_{14}\oplus A_{15} \quad \text{or} \quad \cI(h) = 2A_{12}\oplus A_4\oplus A_5.
\end{align*}
Equation \eqref{eqn:VerticalSumsAB3} thus follows from $[\cI(h),\cI(X)]=[\cF(\cI(h)),X]=2m$ and $[\cI(h),\cI(Y)]=[\cF(\cI(h)),Y]=2n$.
Finally, from $[\cI(gh),\cI(gh)]=6$, $[\cI(gh),\cI(g)]=2$, $[\cI(gh),\cI(h)]=0$, and $[\cI(gh),\cI(\unit_{\cC})]=0$, we deduce
\begin{align*}
\cI(gh) = A_{10}\oplus A_{11}\oplus A_{16}\oplus A_{17}\oplus A_{18}\oplus A_{19} \quad \text{or} \quad \cI(gh) = 2A_{16}\oplus A_{10}\oplus A_{11}.
\end{align*}
Equation \eqref{eqn:VerticalSumsAB4} consequently holds by virtue of $[\cI(gh),\cI(X)]=2n$ and $[\cI(gh),\cI(Y)]=2m$.
\end{proof}

\begin{rem}
In Proposition \ref{biao}, we presented only one decomposition of $\cI$; in fact, there are eight versions. In the rest of this paper, we consider only the one listed in the proposition, since the conclusions for other decompositions are similar, and the final results are exactly the same.
\end{rem}

In what follows, we calculate the dimensions of all hom-spaces between $\cI(X)$ and $\cI(Y)$ by two different approaches. The first  is by taking adjoints and using Equation (\ref{FIY}). The second uses the induction matrix $W^T$ computed in Proposition \ref{biao}. This yields the
following equations:
\begin{align}
[\cI(X),\cI(X)] &= 8 + 2m^2 + 2n^2= \sum_{i=1}^{19}x_i^2 +\sum\nolimits_{s} z_s^2,\label{xx}\\
[\cI(X),\cI(Y)]&= 4mn= \sum_{i=1}^{19}x_iy_i +\sum\nolimits_{s} z_sz_s',\label{xy}\\
[\cI(Y),\cI(Y)]&= 8 + 2m^2 + 2n^2=  \sum_{i=1}^{19}y_i^2 +\sum\nolimits_{s} (z'_s)^{2}.\label{yy}
\end{align}

\begin{lem}
The non-negative integers $x_i,y_i$ for $6\leq i \leq 19$ and $z_s,z_s'$ for $s\in \cS$ satisfy the equation:
\begin{equation}
16+\frac{25}{8}(m +n)^2 - \frac{r^2}{8}(m-n)^2 = \sum_{i=6}^{19}(x_i+y_i)^2 +\sum\nolimits_{s} (z_s+z_s')^2. \label{mainequal}
\end{equation}
\end{lem}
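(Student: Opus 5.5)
The plan is to add the three hom-space identities \eqref{xx}, \eqref{xy}, \eqref{yy}, taking \eqref{xy} twice, so that each square $x_i^2+2x_iy_i+y_i^2$ becomes $(x_i+y_i)^2$. Then we remove the contributions of the indices $1\le i\le 5$ using the row sums already fixed in Proposition \ref{biao}.

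\textbf{Step 1.} Compute \eqref{xx}$\,+\,2\cdot$\eqref{xy}$\,+\,$\eqref{yy}. The left-hand sides give
\begin{equation*}
(8+2m^2+2n^2)+8mn+(8+2m^2+2n^2)=16+4(m+n)^2 .
\end{equation*}
The right-hand sides give
\begin{equation*}
\sum_{i=1}^{19}(x_i+y_i)^2+\sum\nolimits_s(z_s+z_s')^2 .
\end{equation*}
This step is purely formal. It uses only that the matrix $W$ records $\cF$ and that $W^T$ records $\cI$, both as set up in Proposition \ref{biao}.

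\textbf{Step 2.} Evaluate the terms with $1\le i\le 5$ using \eqref{eqn:HorizontalSumsAB4} and \eqref{eqn:HorizontalSumsAB3}. Write $a=m+n$ and $b=m-n$, as in Remark \ref{rem:K2abBetaConditions}.
\begin{itemize}
\item The indices $i=1,4,5$ each give $(a/2)^2$, for a total of $\tfrac34 a^2$.
\item The indices $i=2,3$ give
\begin{equation*}
\Bigl(\tfrac{a}{4}-\tfrac{rb}{4}\Bigr)^2+\Bigl(\tfrac{a}{4}+\tfrac{rb}{4}\Bigr)^2=\tfrac{a^2}{8}+\tfrac{r^2b^2}{8}.
\end{equation*}
The cross terms cancel here, which is why the identity contains no term linear in $r$.
\end{itemize}
Altogether the first five indices contribute $\tfrac78a^2+\tfrac18r^2b^2$.

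\textbf{Step 3.} Subtract this contribution from $16+4a^2$. This leaves
\begin{equation*}
16+\tfrac{25}{8}a^2-\tfrac{r^2}{8}b^2,
\end{equation*}
which equals the remaining sum over $6\le i\le 19$ and $s\in\cS$. This is exactly \eqref{mainequal}.

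Every step is bookkeeping, so no real obstacle is expected. The only point needing care is the case $m=n$, where $r$ multiplies $b=0$. There the formula degenerates consistently with Lemma \ref{lem:Ostrik2.13}, so no separate case split should be necessary. Finally, we note that the identity uses the particular decomposition of $\cI$ chosen in Proposition \ref{biao}. The other versions mentioned in the subsequent remark would only relabel which $A_j$ appear, and would give the same identity.
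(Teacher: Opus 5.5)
Your proposal is correct and is exactly the paper's argument: adding \eqref{xx}, \eqref{yy} and twice \eqref{xy} gives $16+4(m+n)^2=\sum_{i=1}^{19}(x_i+y_i)^2+\sum_s(z_s+z_s')^2$. Subtracting the contribution $\tfrac78(m+n)^2+\tfrac{r^2}{8}(m-n)^2$ of $A_1,\dots,A_5$, read off from \eqref{eqn:HorizontalSumsAB4} and \eqref{eqn:HorizontalSumsAB3}, then yields \eqref{mainequal}, and your arithmetic checks out.
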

\begin{proof}
First,  summing  Equations \eqref{xx} and \eqref{yy} and twice Equation \eqref{xy} together, then Equation (\ref{mainequal}) is a direct result of Equations \eqref{eqn:HorizontalSumsAB4} and \eqref{eqn:HorizontalSumsAB3}.
\end{proof}

\begin{prop}\label{prop:TwoThetasAB}
Denote the twists of $A_i$ by $\theta_i$, $1\leq i\leq 19$. 
Without loss of generality, the twists satisfy one of the following
\[
\begin{array}{ccccc|cccccc|cccc|cccc}
  \theta_1 & \theta_2 & \theta_3 & \theta_4 & \theta_5 & \theta_6 & \theta_7 & \theta_8 & \theta_9 & \theta_{10} & \theta_{11} & \theta_{12} & \theta_{13} & \theta_{14} & \theta_{15} & \theta_{16} & \theta_{17} & \theta_{18} & \theta_{19} \\
\hline
  1 & 1 & 1 & 1 & 1 & \pm1 &  \pm1 & \pm1 &  \pm1 & \mp1 & \mp1 & -1 & -1 & -1 & -1 & \pm1 & \pm1 & \pm1 & \pm1\\
 \hline
  1 & 1 & 1 & 1 & 1 & \pm1 &  \mp1 & \pm1 &  \mp1 & \pm1 & \mp 1 & -1 & -1 & -1 & -1 & \pm1 & \mp 1 & \pm1 & \mp 1\\
\hline
  1 & 1 & 1 & 1 & 1 & \pm i & \pm i & \pm i & \pm i & \mp i & \mp i & -1 & -1 & -1 & -1 &\pm i &\pm i &\pm i &\pm i\\
\hline
  1 & 1 & 1 & 1 & 1 & \pm i &  \mp i & \pm i &  \mp i & \pm i & \mp i & -1 & -1 & -1 & -1 & \pm i & \mp i & \pm i & \mp i
\end{array}.
\]
\end{prop}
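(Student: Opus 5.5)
The approach is to control the twists through the action of the ribbon structure of $\cZ(\cC)$ on the multiplicity spaces $\mathrm{Hom}_\cC(V,\cF(A))$ for $V\in\{\unit_\cC,g,h,gh\}$. There are three steps: (i) fix the twists on $\cI(\unit_\cC)$; (ii) bound the possible twist values on $\cI(g),\cI(h),\cI(gh)$; (iii) use trace identities plus tensoring with invertible central objects to fix how these values are distributed.

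\emph{Step (i).} For any $A\in\cO(\cZ(\cC))$ with $[\unit_\cC,\cF(A)]\neq 0$, the twist $\theta_A$ acts on $\mathrm{Hom}(\unit_\cC,\cF(A))$ via the half-braiding of $A$ with $\unit_\cC$, which is trivial. Since $\cC$ is pseudo-unitary (Lemma \ref{pseudounitary}), the spherical structure is canonical. Hence $\theta_1=\dots=\theta_5=1$.

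\emph{Step (ii).} For an invertible $V$ and a summand $A$ of $\cI(V)$, the scalar $\theta_A$ restricted to $\mathrm{Hom}(V,\cF(A))$ should be computed from the half-braiding $c_{A,V}$ composed with the canonical identification $V\otimes V^*\cong\unit$. This expresses $\theta_A^2$ through the restriction of the $3$-cocycle of the pointed part $\mathrm{Vec}^{\omega}_{\bbZ/2\bbZ\times\bbZ/2\bbZ}\subset\cC$ to $\langle V\rangle$. For an element of order $2$, this forces $\theta_A\in\{\pm1\}$ or $\theta_A\in\{\pm i\}$ according to whether that restriction is trivial or not. Running this over $\cI(g)$, $\cI(h)$ and $\cI(gh)$ gives the two families in the table, real and $\pm i$. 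It should also show that $h$ carries the trivial restriction, since $A_4,A_5\subset\cI(\unit_\cC)\cap\cI(h)$ have twist $1$.

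\emph{Step (iii).} To pin down the sign pattern, I will use the Ng--Schauenburg type identity
\[
\sum_{A}\theta_A\,[V,\cF(A)] \;=\; \text{trace of the ``rotation'' operator on } \textstyle\bigoplus_{W\in\cO(\cC)}\mathrm{Hom}(V,W\otimes V\otimes W^*),
\]
computed directly in $\cC$ from the fusion rules in Proposition \ref{biao}. For $V=h$, the invertible $W$ contribute the braiding characters $\pm1$ and the $X,Y$ contribute their multiplicities. Together with $\theta_4=\theta_5=1$ and the fact that $A_{12},\dots,A_{15}$ share a common value from Step (ii), this should force $\theta_{12}=\dots=\theta_{15}=-1$.

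For $g$ and $gh$, I will use that $\cZ(\cC)$ contains invertible objects lifting $h$, and possibly $g$. Tensoring with these permutes the summands of $\cI(g)$ and $\cI(gh)$, and the balancing axiom then gives relations $\theta_{A\otimes L}=\theta_A\theta_L\cdot(\text{monodromy})$. These relations split $A_6,\dots,A_{11}$ and $A_{16},\dots,A_{19}$ into the two patterns listed, either all equal except $A_{10},A_{11}$, or alternating. The shared summands $A_{10},A_{11}$ between $\cI(g)$ and $\cI(gh)$ tie the two blocks together. The "without loss of generality" clause covers relabelling within each block and the complex conjugation $i\leftrightarrow -i$.

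I expect the main obstacle to be Step (iii), specifically evaluating the rotation traces on $\mathrm{Hom}(V,X\otimes V\otimes X)$ for the non-invertible $X$ without knowing the associator. I would first try to avoid this by using only the invertible contributions together with the dimension equations \eqref{eqn:VerticalSumsAB2}--\eqref{eqn:VerticalSumsAB4}, arguing parity and sign mod the $\bbZ/2$ action. As a fallback, I would use the Gauss sum $\sum_A\theta_A\dim(A)^2$ of $\cZ(\cC)$, which equals $\dim(\cC)$ for a Drinfeld center, to exclude the remaining sign patterns.
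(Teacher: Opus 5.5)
Your Steps (i) and (ii) are sound and match what the paper uses. Step (i) is Ostrik's Theorem 2.5: summands of $\cI(\unit_\cC)$ have trivial twist. Step (ii) is the content of Schopieray's Proposition 3.8 combined with $\nu_2(g)\dim(\cC)=\Tr_{\cZ(\cC)}(\theta^2_{\cI(g)})$. This gives $\theta_j^2=\nu_2(g)=\pm1$ on $\cI(g)$ and $\cI(gh)$, and $\theta_j^2=1$ on $\cI(h)$. Note that (ii) gives $A_{12},\dots,A_{15}$ only a common \emph{square}, not the common value you invoke in (iii).

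The genuine gap is Step (iii), which is where the table is actually decided.
\begin{itemize}
\item The identity you propose there is not the Ng--Schauenburg formula. As written (unweighted multiplicities on one side, a ``rotation'' on $\bigoplus_W\mathrm{Hom}(V,W\otimes V\otimes W^*)$ on the other), it is not an established identity. Evaluating it would need associator data for $X,Y$ that you do not have, and the contributions you assign to invertible $W$ and to $X,Y$ are unjustified. So ``this should force $\theta_{12}=\dots=\theta_{15}=-1$'' is not established.
\item For $g$ and $gh$, nothing provides invertible objects of $\cZ(\cC)$ over $h$ or $g$. Such an object over $h$ would be a summand of $\cI(h)$. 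But $A_4,A_5$ have dimension $2+\frac{m+n}{2}d$, and $A_{12},\dots,A_{15}$ have dimension $1+(x_j+y_j)d$, which equals $1$ only if $x_j+y_j=0$; this is not forced.
\item Even given such objects, the monodromy scalars in your balancing relation are unknown.
\item The Gauss-sum fallback involves all the unknown $Z_s$ and their twists, so it cannot isolate $\theta_6,\dots,\theta_{19}$.
\end{itemize}

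The missing idea is the Ng--Schauenburg formula in its actual, dimension-weighted form, $\nu_n(V)\dim(\cC)=\Tr_{\cZ(\cC)}(\theta^n_{\cI(V)})=\sum_A\theta_A^n[V,\cF(A)]\dim(A)$. Use it with $n=1$, where $\nu_1(V)=0$ for $V\neq\unit_\cC$; no rotation needs to be computed inside $\cC$.

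For $V=h$ it gives $0=\dim(A_4)+\dim(A_5)+\sum_{j=12}^{15}\theta_j\dim(A_j)$. By Lemma \ref{lem:Ostrik2.13}, \eqref{eqn:HorizontalSumsAB4} and \eqref{eqn:VerticalSumsAB3}, both $\dim(A_4)+\dim(A_5)$ and $\sum_{j=12}^{15}\dim(A_j)$ equal $4+(m+n)d>0$. The triangle inequality then forces $\theta_{12}=\dots=\theta_{15}=-1$.

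For $V=g$ it gives $0=\theta_6+\theta_7+\theta_8+\theta_9+2\theta_{10}+2\theta_{11}+\bigl(\sum_{j=6}^{11}\theta_j(x_j+y_j)\bigr)d$. The $\theta_j\in\{\pm1,\pm i\}$ lie in $\bbQ(i)$ and $d$ is real and irrational, so the constant part vanishes. Consider the solutions of $\theta_6+\theta_7+\theta_8+\theta_9+2\theta_{10}+2\theta_{11}=0$ with all $\theta_j\in\{\pm\epsilon\}$, $\epsilon\in\{1,i\}$. Up to relabelling they are:
\begin{itemize}
\item $\theta_6=\dots=\theta_9=-\theta_{10}=-\theta_{11}$, or
\item $\theta_{10}=-\theta_{11}$ with $\theta_6,\dots,\theta_9$ split two and two.
\end{itemize}
These are exactly the rows of the table. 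The same argument with $\nu_1(gh)=0$ and the shared summands $A_{10},A_{11}$ fixes $\theta_{16},\dots,\theta_{19}$.
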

\begin{proof}
By \cite[Theorem 2.5]{ostrik2015pivotal}, we know $\theta_i=1$ for $1 \leq i \leq 5$.
 Note   $\theta_4=\theta_5=1$ and $\nu_1(h)=\Tr_{\cZ(\cC)}(\theta_{\cI(h)})=0$ by \cite[Theorem 4.1]{ngschauenburg} , we have $\theta_j=-1$, $12\leq j\leq 15$. Then we calculate the  Frobenius-Schur indicators  of $g$. Specifically, 
\begin{align}
0 &=\nu_1(g)\dim(\cC)= \Tr_{\cZ(\cC)}(\theta_{\cI(g)})
    =\sum_{i=6}^{11} \theta_i \dim(A_i) 
   = \sum_{i=6}^9\theta_j+ 2\theta_{10} +2\theta_{11}+\Bigl(\sum_{i=6}^{11}\theta_i(x_i+y_i)\Bigr)d,\nonumber \\[1pt]
\pm \dim(\cC)&=\nu_2(g)\dim(\cC)=\Tr_{\cZ(\cC)}(\theta_{\cI(g)}^2)
     = \sum_{i=6}^{11} \theta_i^2 \dim(A_i) 
 = \sum_{i=6}^9\theta_j^2 + 2\theta_{10}^2 + 2\theta_{11}^2 +\Bigl(\sum_{i=6}^{11}\theta_i^2(x_i+y_i)\Bigr)d. \nonumber   
\end{align}
Since   $g^2=\unit_{\cC}$, the $\theta_j^2$ are equal to each other for  $6\leq j\leq 11$  by \cite[Proposition 3.8]{schopieray2}. Together with Equation \eqref{eqn:VerticalSumsAB2}, this yields
\begin{equation*}
\pm\big(8+2(m+n)d\big)
= 8\theta_6^2 + \theta_6^2\Bigl(\sum_{i=6}^{11}(x_i+y_i)\Bigr)d= \theta_6^2\big(8+2(m+n)d\big).
\end{equation*}
Canceling the nonzero term $8+2(m+n)d$ gives $\theta_6^2=\pm1$, hence $\theta_j\in\{\pm1,\pm i\}$ for $6 \leq j \leq 11$. Since $\bbQ(i)\cap \mathbb{Q}(d) = \mathbb{Q}$, we have $\theta_6+ \theta_7+ \theta_8+\theta_9+ 2\theta_{10}+ 2\theta_{11}=0$. By using this equation, we find that, up to permutation equivalence, there are only eight possibilities, as listed in the table. The analysis for the rest ribbons is similar to the above by considering  indicators $\nu_1(gh)$ and $\nu_2(gh)$.
\end{proof}

\begin{rem}
By Proposition \ref{prop:TwoThetasAB}, we know $\nu_2(\unit_{\cC})=\nu_2(h)=1$, and $\nu_2(g)=\nu_2(gh)=\pm 1$. Notice that the first two rows correspond to $\nu_2(g)=\nu_2(gh)=1$, and the last two rows correspond to $\nu_2(g)=\nu_2(gh)=-1$.
\end{rem}

\begin{lem}\label{m+n}
The integral part of the  dimension of each of $A_4,A_5,A_{10},A_{11}$ is two. And we have the following equality:
$
(x_{4}+y_{4}) + (x_{5}+y_{5})=(x_{10}+y_{10}) + (x_{11}+y_{11}) = m+n.
$
\end{lem}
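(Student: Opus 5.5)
The plan is to read the dimension of each simple object of $\cZ(\cC)$ off its column in the matrix $W$ of Proposition \ref{biao}. Since $\cC$ is pseudo-unitary (Lemma \ref{pseudounitary}), $\dim(Z)=\FPdim(\cF(Z))$ for $Z\in\cO(\cZ(\cC))$. Write $d=\FPdim(X)=\FPdim(Y)$ and let $a:=m+n$.

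First, the integral parts. By the table, $\cF(A_4)$ and $\cF(A_5)$ contain $\unit_\cC\oplus h$. Also $\cF(A_{10})$ and $\cF(A_{11})$ contain $g\oplus gh$. No other invertible object occurs in these columns. Hence
\[
\dim(A_j)=2+(x_j+y_j)d \quad\text{for } j=4,5,10,11 .
\]
Because $d$ is irrational and $x_j+y_j\in\bbZ_{\ge0}$, the integral part in $\bbZ[d]$ of each dimension is exactly $2$. For $A_4,A_5$ the equality is then immediate: Equation \eqref{eqn:HorizontalSumsAB4} gives $x_4+y_4=x_5+y_5=\frac{a}{2}$, so the sum is $a$. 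This is also consistent with \eqref{fpdimd} and \eqref{fpdime}.

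The real content is the statement for $A_{10},A_{11}$, where no formula like Lemma \ref{lem:Ostrik2.13} is available. Put $u_j:=x_j+y_j$. I would combine three constraints.

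(i) The total sum is fixed: Equation \eqref{eqn:VerticalSumsAB2} gives $\sum_{i=6}^{11}u_i=2a$.

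(ii) The twists give a linear relation. The irrational part of $0=\Tr(\theta_{\cI(g)})$ is $\sum_{i=6}^{11}\theta_iu_i=0$, and similarly for $\cI(gh)$ with indices $10,11,16,\dots,19$ using \eqref{eqn:VerticalSumsAB4}. In rows one and three of Proposition \ref{prop:TwoThetasAB}, $\theta_{10},\theta_{11}$ have the opposite sign to $\theta_6,\dots,\theta_9$. There $\sum_{i=6}^9u_i=u_{10}+u_{11}$, and together with (i) this yields $u_{10}+u_{11}=a$ at once.

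(iii) Rows two and four are the main obstacle, since the sign pattern no longer separates $A_{10},A_{11}$ from the rest. Here I would use arithmetic of dimensions. For any simple $A$ in $\cZ(\cC)$, $\dim(\cC)/\dim(A)$ is an algebraic integer, because $\dim\cZ(\cC)=\dim(\cC)^2$ and $\dim\cZ(\cC)/\dim(A)^2$ is an algebraic integer. Let $d'=-2/d$ be the conjugate of $d$, so $d+d'=a$ and $dd'=-2$. Then
\[
N(2+ud)=4+2ua-2u^2 \quad\text{must divide}\quad N(8+2ad)=8(8+a^2).
\]
In addition, Equation \eqref{twentyseven} applied with the nontrivial Galois automorphism forces the conjugate $2+ud'$ to give a positive square of an honest dimension. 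This bounds $u$ via $|2+ud'|$. I expect these conditions to pin down $u_{10},u_{11}\in\{0,\frac a2\}$, or at least to exclude values other than $\frac a2$ compatible with (i), the sum over $\cI(gh)$, and the alternating relation.

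As a fallback, I would use the symmetry between the blocks $(\cI(\unit),\cI(h))$ and $(\cI(g),\cI(gh))$. Tensoring with $g$ interchanges $X$ and $Y$. The $[\cI(g),\cI(gh)]=2$ overlap is structurally the same as $[\cI(\unit),\cI(h)]=2$, so the $\unit$/$h$ computation that produced $A_4,A_5$ should transfer and give $u_{10}+u_{11}=a$ directly.
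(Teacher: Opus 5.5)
\textbf{There is a gap in rows two and four of Proposition \ref{prop:TwoThetasAB}.} The parts that work: the integral parts, the case of $A_4,A_5$, and rows one and three. In rows one and three $\theta_{10}=\theta_{11}$ has the opposite sign to $\theta_6,\dots,\theta_9$, so the $d$-coefficient of $\Tr(\theta_{\cI(g)})=0$ together with \eqref{eqn:VerticalSumsAB2} does give $u_{10}+u_{11}=a$.

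In rows two and four, where $\theta_{10}=-\theta_{11}$, step (iii) is only an expectation, and the constraints you list cannot settle it. Take $m=n=2$, so $a=4$ and $N(\dim(\cC))=192$. Consider $u_6=\dots=u_9=2$, $u_{10}=u_{11}=0$, $u_{16}=\dots=u_{19}=2$.
\begin{itemize}
\item This satisfies \eqref{eqn:VerticalSumsAB2} and \eqref{eqn:VerticalSumsAB4}.
\item It satisfies both alternating relations coming from $\Tr(\theta_{\cI(g)})=\Tr(\theta_{\cI(gh)})=0$, in their rational and irrational parts.
\item It passes every norm test: $N(1+2d)=1$ and $N(2)=4$ both divide $192$, and $2+0\cdot d'=2>0$.
\end{itemize}
Yet $u_{10}+u_{11}=0\neq a$. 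Your guess $u\in\{0,\frac a2\}$ is also off, since $4+8u-2u^2\mid 192$ already admits $u=0,2,4$. The fallback does not help either. There is no analogue of Lemma \ref{lem:Ostrik2.13} for $\cI(g)$. Moreover, $\cI(g)\cong\tilde g\otimes\cI(\unit_\cC)$ would require $g$ to lift to an invertible $\tilde g\in\cZ(\cC)$ with $\cF(\tilde g)=g$, and nothing guarantees such a half-braiding on $g$.

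\textbf{The missing idea} is to use $\hat\sigma$ to \emph{identify} the Galois-conjugate simple object, not merely to bound $u$. The computation goes as follows.
\begin{itemize}
\item We have $\dim(\cC)=8+2ad=2(2+d^2)$ and $\sigma(d)=d'=-2/d$, so $\dim(\cC)/\sigma(\dim(\cC))=d^2/2$.
\item Applying \eqref{twentyseven} to the modular category $\cZ(\cC)$, $\hat\sigma(A_{10})$ is a simple object of dimension $\bigl|\frac{d^2}{2}(2+u_{10}d')\bigr|=\bigl|2+(a-u_{10})d\bigr|$.
\item The case $u_{10}>a$ is impossible, since simple dimensions in $\cZ(\cC)$ lie in $\bbZ_{\ge0}+\bbZ_{\ge0}d$. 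So $\dim(\hat\sigma(A_{10}))=2+(a-u_{10})d$, with integral part $2$.
\item By the table, the only simples of $\cZ(\cC)$ with integral part $2$ are $A_4,A_5,A_{10},A_{11}$.
\end{itemize}
Comparing coefficients of $d$:
\begin{itemize}
\item if $\hat\sigma(A_{10})\in\{A_4,A_5,A_{10}\}$, then $u_{10}=\frac a2$;
\item if $\hat\sigma(A_{10})=A_{11}$, then $u_{10}+u_{11}=a$.
\end{itemize}
The same holds for $A_{11}$, and every combination gives $u_{10}+u_{11}=a$.

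This is the paper's argument. It is uniform in the twist pattern, so your step (ii) becomes unnecessary. It also rules out the configuration above, because no simple object with integral part $2$ has dimension $2+4d$.
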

\begin{proof}
Since $\dim(A_{10})=2+(x_{10}+y_{10})d$, $\dim(\hat{\sigma}(A_{10}))=2+(m+n-(x_{10}+y_{10}))d$ by \cite[Lemma 3.6]{schopieray2}, which means $\hat{\sigma}(A_{10})\in\{A_4,A_5,A_{10},A_{11}\}$. If $\hat{\sigma}(A_{10})\in\{A_4,A_5\}$, then $\dim(\hat{\sigma}(A_{10}))=2+(\frac{m+n}{2})d$, so comparing dimensions yields $x_{10}+y_{10}=\frac{m+n}{2}$. If $\hat{\sigma}(A_{10})=A_{10}$, then $\dim(\hat{\sigma}(A_{10}))=\dim(A_{10})$ immediately gives $x_{10}+y_{10}=\frac{m+n}{2}$. If $\hat{\sigma}(A_{10})=A_{11}$, dimension matching gives $(x_{10}+y_{10})+(x_{11}+y_{11})=m+n$. The same conclusions hold for $\hat{\sigma}(A_{11})$ by symmetry.\end{proof}

\begin{prop}
We have the following upper bound:
\begin{equation}
\sum\nolimits_{s} (z_s+z_s')^2 
\leq 16+\frac{15}{8}(m+n)^2.
\label{eqn:SquaresUpperBoundAB}
\end{equation}
\end{prop}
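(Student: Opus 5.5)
We start from Equation \eqref{mainequal}, which expresses $\sum_{s}(z_s+z_s')^2$ as $16+\frac{25}{8}(m+n)^2-\frac{r^2}{8}(m-n)^2$ minus $\sum_{i=6}^{19}(x_i+y_i)^2$. We drop the term $-\frac{r^2}{8}(m-n)^2\le 0$, so it suffices to prove the lower bound
\begin{equation*}
\sum_{i=6}^{19}(x_i+y_i)^2\ \geq\ \frac{10}{8}(m+n)^2=\frac{5}{4}(m+n)^2 .
\end{equation*}

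We obtain this lower bound by splitting the indices $6\le i\le 19$ into blocks and applying Cauchy--Schwarz to each block, using the column sums from Proposition \ref{biao} and the pair sums from Lemma \ref{m+n}.

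\textbf{Block $\{10,11\}$.} By Lemma \ref{m+n}, $(x_{10}+y_{10})+(x_{11}+y_{11})=m+n$. Cauchy--Schwarz then gives
\begin{equation*}
(x_{10}+y_{10})^2+(x_{11}+y_{11})^2\ \ge\ \frac{(m+n)^2}{2}.
\end{equation*}

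\textbf{Block $\{6,7,8,9\}$.} Adding the two equalities in Equation \eqref{eqn:VerticalSumsAB2} gives $\sum_{i=6}^{11}(x_i+y_i)=2(m+n)$. Subtracting the $\{10,11\}$ block leaves $\sum_{i=6}^{9}(x_i+y_i)=m+n$. Cauchy--Schwarz over four terms gives $\sum_{i=6}^{9}(x_i+y_i)^2\ge \frac{(m+n)^2}{4}$.

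\textbf{Block $\{12,13,14,15\}$.} Equation \eqref{eqn:VerticalSumsAB3} together with Equation \eqref{eqn:HorizontalSumsAB4} (where $x_4+y_4=x_5+y_5=\frac{m+n}{2}$) gives $\sum_{i=12}^{15}(x_i+y_i)=2(m+n)-(m+n)=m+n$. Hence $\sum_{i=12}^{15}(x_i+y_i)^2\ge\frac{(m+n)^2}{4}$.

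\textbf{Block $\{16,17,18,19\}$.} Equation \eqref{eqn:VerticalSumsAB4} and Lemma \ref{m+n} give $\sum_{i=16}^{19}(x_i+y_i)=m+n$. Hence $\sum_{i=16}^{19}(x_i+y_i)^2\ge\frac{(m+n)^2}{4}$.

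Adding the four blocks gives
\begin{equation*}
\sum_{i=6}^{19}(x_i+y_i)^2\ \ge\ \Bigl(\tfrac12+\tfrac14+\tfrac14+\tfrac14\Bigr)(m+n)^2=\frac54(m+n)^2 .
\end{equation*}
Substituting into \eqref{mainequal} yields $\sum_s(z_s+z_s')^2\le 16+\frac{25}{8}(m+n)^2-\frac{10}{8}(m+n)^2=16+\frac{15}{8}(m+n)^2$, which is the claimed bound \eqref{eqn:SquaresUpperBoundAB}.

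The main thing to verify is that the blocks are disjoint and that the block sums are exactly as stated. In particular, $A_4,A_5,A_{10},A_{11}$ each appear in two induced objects, so their contributions must be subtracted correctly when extracting the remaining blocks from \eqref{eqn:VerticalSumsAB3} and \eqref{eqn:VerticalSumsAB4}.

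Proposition \ref{biao} only displays one of the eight possible decompositions of $\cI$. For the alternative forms such as $\cI(g)=2A_6\oplus A_{10}\oplus A_{11}$, the column sums change: the coefficient of $A_6$ doubles. The same Cauchy--Schwarz argument should still give at least the same lower bound, since fewer terms carry the same weighted sum. I would check this case by case, following the paper's remark that the other decompositions behave similarly.
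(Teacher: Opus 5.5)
Your proof is correct and is essentially the paper's argument. You drop the nonpositive term $-\frac{r^2}{8}(m-n)^2$ in \eqref{mainequal}, then bound $\sum_{i=6}^{19}(x_i+y_i)^2\ge\frac54(m+n)^2$ by Cauchy--Schwarz on the same four blocks $\{6,\dots,9\}$, $\{10,11\}$, $\{12,\dots,15\}$, $\{16,\dots,19\}$, with each block sum equal to $m+n$ by Proposition \ref{biao} and Lemma \ref{m+n}; your deferral of the alternative decompositions of $\cI$ mirrors the paper's own remark.
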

\begin{proof}
By Equations \eqref{eqn:HorizontalSumsAB4}, \eqref{eqn:VerticalSumsAB2}, \eqref{eqn:VerticalSumsAB3}, \eqref{eqn:VerticalSumsAB4} and Lemma \ref{m+n}, we have
\begin{align*}
\sum_{i=6}^9(x_i+y_i)^2&\geq \frac{1}{4}\Bigl(\sum_{i=6}^9(x_i+y_i)\Bigr)^2=\frac{1}{4}(m+n)^2, \quad
\sum_{i=10}^{11}(x_i+y_i)^2\geq \frac{1}{2}\Bigl(\sum_{i=10}^{11}(x_i+y_i)\Bigr)^2= \frac{1}{2}(m+n)^2, \\
\sum_{i=12}^{15}(x_i+y_i)^2 &\geq \frac{1}{4}\Bigl(\sum_{i=12}^{15}(x_i+y_i)\Bigr)^2=\frac{1}{4}(m+n)^2, \quad
\sum_{i=16}^{19}(x_i+y_i)^2 \geq \frac{1}{4}\Bigl(\sum_{i=16}^{19}(x_i+y_i)\Bigr)^2=\frac{1}{4}(m+n)^2.
\end{align*}
Summing the above four inequalities gives
$
\sum_{i=6}^{19}(x_i+y_i)^2  \geq \frac{5}{4}(m+n)^2 
\geq
\frac{5}{4}(m+n)^2 - \frac{1}{8}r^2(m-n)^2
$. Together with  Equation \eqref{mainequal}, we get the desired inequality.
\end{proof}

\begin{rem}
The proposition is also valid in other decomposition of the induction functor, such as $\cI(h) = 2A_{12}\oplus A_4\oplus A_5$, in this case $x_{12}+y_{12}=\frac{m+n}{2},x_{13}+y_{13}=x_{14}+y_{14}=x_{15}+y_{15}=0.$ Simply changing the third inequality to an equality does not affect our results in any way. The other decompositions are completely analogous.
\end{rem}

To simplify notations, we denote by 
$\beta := \frac{1}{4}(m+n) - \frac{r}{4}(m-n)$ and 
$\overline{\beta} := \frac{1}{4}(m+n) + \frac{r}{4}(m-n)$.
Observe that  
$\beta + \overline{\beta} =\frac{ m+n}{2}$ 
and 
$\beta^2 + \overline{\beta}^2 = \frac{1}{8}(m+n)^2 + \frac{r^2}{8}(m-n)^2$.

\subsection{The case where $\nu_2(g)=\nu_2(gh)= 1$}

\begin{prop}
Assume $\nu_2(g)=\nu_2(gh)= 1$, then we have the following equality:
\begin{equation}
-\sum\nolimits_s\theta_s^2(z_s{+}z_s')^2=16+\frac{\Delta+8(m+n)}{d}+4(m+n)^2-\sum\nolimits_s(z_s{+}z_s')^2,\label{eqn:UsefulIndicatorAB2}
\end{equation}
where $\Delta:=\pm(\Tr_{\mathcal{Z}(\cC)}(\theta_{\cI(X)}^2) + \Tr_{\mathcal{Z}(\cC)}(\theta_{\cI(Y)}^2))
\in \{0,\pm2\dim(\cC)=\pm  (16+4(m+n)d)\}$.
\end{prop}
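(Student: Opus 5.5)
The plan is to compute $\Tr_{\cZ(\cC)}(\theta_{\cI(X)}^2)+\Tr_{\cZ(\cC)}(\theta_{\cI(Y)}^2)$ in two ways. First, by \cite[Theorem 4.1]{ngschauenburg}, $\Tr_{\cZ(\cC)}(\theta^2_{\cI(V)})=\nu_2(V)\dim(\cC)$. Since $X,Y$ are self-dual, $\nu_2(X),\nu_2(Y)\in\{\pm1\}$. Hence the sum lies in $\{0,\pm2\dim(\cC)\}$, and $\dim(\cC)=8+2(m+n)d=\tfrac12(16+4(m+n)d)$ by pseudo-unitarity (Lemma \ref{pseudounitary}). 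This justifies the stated range of $\Delta$, and the sign $\pm$ in its definition lets me fix whichever sign is convenient at the end.

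Second, I expand via the induction matrix $W^T$ of Proposition \ref{biao}:
\[
\Tr(\theta^2_{\cI(X)})+\Tr(\theta^2_{\cI(Y)})=\sum_{i=1}^{19}(x_i+y_i)\theta_i^2\dim(A_i)+\sum\nolimits_s(z_s+z_s')\theta_s^2\dim(Z_s).
\]
The dimensions are read off from the columns of $W$. For $i\in\{4,5,10,11\}$ we have $\dim(A_i)=2+(x_i+y_i)d$, while for the other $A_i$ we have $\dim(A_i)=1+(x_i+y_i)d$, and $\dim(Z_s)=(z_s+z_s')d$. The hypothesis $\nu_2(g)=\nu_2(gh)=1$ puts us in the first two rows of Proposition \ref{prop:TwoThetasAB}, so $\theta_i^2=1$ for all $1\le i\le19$. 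The sum therefore splits into a term linear in the $x_i+y_i$ with integer coefficients, plus $d\bigl(\sum_{i=1}^{19}(x_i+y_i)^2+\sum_s\theta_s^2(z_s+z_s')^2\bigr)$.

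For the linear part, I use \eqref{eqn:HorizontalSumsAB4} and \eqref{eqn:HorizontalSumsAB3}, which give $\beta+\overline\beta=\frac{m+n}2$. I also use the vertical sums \eqref{eqn:VerticalSumsAB2}--\eqref{eqn:VerticalSumsAB4} and Lemma \ref{m+n}, which gives $(x_{10}+y_{10})+(x_{11}+y_{11})=m+n$. The blocks $A_1,\dots,A_5$ contribute $\frac{m+n}{2}+\frac{m+n}{2}+2(m+n)$, the blocks $A_6,\dots,A_{11}$ contribute $2(m+n)+(m+n)$, and the blocks $A_{12},\dots,A_{15}$ and $A_{16},\dots,A_{19}$ contribute $m+n$ each. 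The total is $8(m+n)$.

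For the quadratic part, I compute $\sum_{i=1}^5(x_i+y_i)^2=\frac34(m+n)^2+\beta^2+\overline\beta^2=\frac78(m+n)^2+\frac{r^2}8(m-n)^2$. I then add \eqref{mainequal} for $i\ge6$. The $r^2$ terms cancel, giving
\[
\sum_{i=1}^{19}(x_i+y_i)^2=16+4(m+n)^2-\sum\nolimits_s(z_s+z_s')^2 .
\]
Equating the two computations, dividing by $d$, and choosing the sign in $\Delta$ appropriately then yields \eqref{eqn:UsefulIndicatorAB2}.

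The main obstacle is bookkeeping rather than ideas. One must confirm that the constant parts of $\dim(A_i)$ are correctly $1$ or $2$, using the overlaps of $\cI(\unit_\cC),\cI(g),\cI(h),\cI(gh)$. One must also check that the $r$-dependent terms cancel exactly. Finally, the sign convention must be tracked so that the linear term $8(m+n)$ appears with the stated sign in $\frac{\Delta+8(m+n)}{d}$.
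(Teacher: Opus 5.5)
Your proposal is correct and is essentially the paper's own proof: you expand $\Tr_{\cZ(\cC)}(\theta^2_{\cI(X)})+\Tr_{\cZ(\cC)}(\theta^2_{\cI(Y)})$ through the induction matrix with all $\theta_i^2=1$, extract the constant term $8(m+n)$ and the quadratic term (via \eqref{mainequal}), divide by $d$, and absorb the sign into $\Delta$. Your bookkeeping checks out; in fact the linear term is literally the sum of the four identities \eqref{eqn:VerticalSumsAB1}--\eqref{eqn:VerticalSumsAB4}, because $A_4,A_5,A_{10},A_{11}$ each appear in two of them, so Lemma \ref{m+n} is not actually needed to get $8(m+n)$.
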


\begin{proof}
By \cite[Theorem 4.1]{ngschauenburg}, the second Frobenius-Schur indicators  of simple objects $X$ and $Y$ are:
\begin{align*}
\pm \dim(\cC)&=\nu_2(X)\dim(\cC)=\Tr_{\mathcal{Z}(\cC)}(\theta_{\cI(X)}^2)\\
&= x_1\Bigl(1+\frac{m+n}{2}d\Bigr)+x_2(1+\beta d)+x_3(1+\overline{\beta}d)+ x_4\Bigl(2+\frac{m+n}{2}d\Bigr)+x_5\Bigl(2+\frac{m+n}{2}d\Bigr) \\
  &\quad+\sum_{i=6}^9x_i\theta_{i}^2(1+(x_i+y_i)d) +\sum_{i=10}^{11}x_i\theta_{i}^2(2+(x_i+y_i)d) 
   +\sum_{i=12}^{15}x_i(1+(x_i+y_i)d)\\
  &\quad +\sum_{i=16}^{19}x_i\theta_{i}^2(1+(x_i+y_i)d)  + \sum\nolimits_s \theta_s^2 z_s (z_s+z_s')d, 
\end{align*}
\vspace{-0.3cm}
\begin{align*}
\pm \dim(\cC)&=\nu_2(Y)\dim(\cC)=\Tr_{\mathcal{Z}(\cC)}(\theta_{\cI(Y)}^2)\\
&= y_1\Bigl(1+\frac{m+n}{2}d\Bigr)+y_2(1+\beta d)+y_3(1+\overline{\beta}d)+ y_4\Bigl(2+\frac{m+n}{2}d\Bigr)+y_5\Bigl(2+\frac{m+n}{2}d\Bigr) \\
  &\quad+\sum_{i=6}^9y_i\theta_{i}^2(1+(x_i+y_i)d) +\sum_{i=10}^{11}y_i\theta_{i}^2(2+(x_i+y_i)d) 
   +\sum_{i=12}^{15}y_i(1+(x_i+y_i)d)\\
  &\quad +\sum_{i=16}^{19}y_i\theta_{i}^2(1+(x_i+y_i)d)  + \sum\nolimits_s \theta_s^2 z_s' (z_s+z_s')d.
\end{align*}
Adding the above equations together, and notice that $\theta_i=\pm1$ for $6\leq i\leq 11$ and $16\leq i\leq 19$ if $\nu_2(g)=\nu_2(gh)= 1$, the it follows from  Equations \eqref{eqn:HorizontalSumsAB4}, \eqref{eqn:HorizontalSumsAB3}, \eqref{eqn:VerticalSumsAB2}, \eqref{eqn:VerticalSumsAB3} and \eqref{eqn:VerticalSumsAB4} that
\begin{equation*}
\Tr_{\mathcal{Z}(\cC)}(\theta_{\cI(X)}^2)+\Tr_{\mathcal{Z}(\cC)}(\theta_{\cI(Y)}^2)
=8(m+n)+\frac{7}{8}(m+n)^2d+\frac{r^2}{8}(m-n)^2d
+\sum_{i=6}^{19}(x_i+y_i)^2d +\sum_s \theta_s^2(z_s{+}z_s')^2d,
\end{equation*}
using Equation \eqref{mainequal} and dividing equation by $d$, we get Equation \eqref{eqn:UsefulIndicatorAB2}.
\end{proof}

\begin{thm}\label{thm1}
If $\nu_2(g)=\nu_2(gh)= 1$ and $m\neq0,n\neq0$, then  $(m,n)$ must be one of $(2,2),(4,6),(6,4)$.
\end{thm}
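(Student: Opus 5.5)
The plan is to extract a numerical bound on $a:=m+n$ from Equation \eqref{eqn:UsefulIndicatorAB2}. I would combine that bound with the arithmetic restrictions of Remark \ref{rem:K2abBetaConditions}, which leave only finitely many candidate pairs, and then exclude the remaining spurious candidates one at a time. Write $S:=\sum_s(z_s+z_s')^2$. Since the $\theta_s$ are roots of unity, the left-hand side of \eqref{eqn:UsefulIndicatorAB2} has real part in $[-S,S]$. Taking real parts therefore gives
\begin{equation*}
16+4a^2+\frac{\Delta+8a}{d}\le 2S\le 32+\frac{15}{4}a^2 ,
\end{equation*}
where the last step uses the upper bound \eqref{eqn:SquaresUpperBoundAB}. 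I would then treat the three possible values of $\Delta$ separately, using $1/d=\frac14(\sqrt{8+a^2}-a)\in(0,1)$.
\begin{itemize}
\item If $\Delta=+(16+4ad)$, the inequality reads $\frac14 a^2+4a+\frac{16+8a}{d}\le 16$. This forces $a\le 3$.
\item If $\Delta=0$, it reads $\frac14a^2+\frac{8a}{d}\le 16$, giving $a\le 8$.
\item If $\Delta=-(16+4ad)$, it reads $\frac14a^2-4a+\frac{8a-16}{d}\le16$, which still bounds $a$ by roughly $19$.
\end{itemize}
In every case $a<38$.

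Next I would feed this bound into Remark \ref{rem:K2abBetaConditions}. Either $m=n$ with $4\mid a$, or $m\neq n$ with $r\in\bbZ$, $2\mid a$ and $a\ge 10$. The smallest instances of the second alternative with $a<38$ are exactly $(4,6),(6,4)$. The candidates are therefore $(2,2),(4,4),(6,6),(8,8)$ (together with $(10,10)$ if the $\Delta<0$ bound turns out not to exclude it), plus $(4,6)$ and $(6,4)$. The task then reduces to excluding $m=n\ge 4$.

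For these exclusions I would use two further constraints. The first is the full identity \eqref{eqn:UsefulIndicatorAB2}, not merely its real part. Both sides lie in $\bbQ(\theta)\cap\bbQ(d)$. Applying the Galois automorphism $\sigma$ with $\sigma(\sqrt{8+a^2})=-\sqrt{8+a^2}$, which sends $1/d\mapsto -d/2$, yields a second identity. That identity has the same integer part but its $\Delta$-term becomes large and of opposite sign. Comparing the two against $|\sum_s\theta_s^2(z_s+z_s')^2|\le S$ should sharpen the bound considerably; for $\Delta=0$, for example, the conjugate gives $16+4a^2-4ad\ge 0$. The second constraint is that, when $m=n$, one has $r=1$ and $\beta=\overline\beta=a/4$. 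Equation \eqref{mainequal} then becomes $16+\frac{25}{8}a^2=\sum_{i\ge6}(x_i+y_i)^2+S$. I would check, for each remaining $a$, whether the lower bound $\frac54a^2$ from the proof of \eqref{eqn:SquaresUpperBoundAB} and the integrality and parity of the $x_i+y_i$ permit a solution compatible with \eqref{eqn:UsefulIndicatorAB2}.

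The main obstacle will be this last elimination of $m=n\in\{4,6,8\}$ in the case $\Delta=-(16+4ad)$. There the crude real-part bound is weakest, and one probably needs the Galois-conjugate identity together with a finite search over the possible distributions of the $x_i+y_i$ in the blocks of Proposition \ref{biao}. My first attempt would be to show that the conjugated equation forces $S$ to exceed \eqref{eqn:SquaresUpperBoundAB} for $a\ge 8$, leaving $(2,2),(4,6),(6,4)$.
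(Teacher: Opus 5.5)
\textbf{There is a genuine gap: your plan never eliminates $m=n\in\{4,6,8\}$, and the mechanism you propose for it yields essentially nothing.}

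Your first stage is sound. Taking real parts of \eqref{eqn:UsefulIndicatorAB2} and using \eqref{eqn:SquaresUpperBoundAB} gives $a=m+n\le 19$; in fact $a\le 17$, and $a\le 5$ when $\Delta=0$. So $(10,10)$ is already excluded, and Remark~\ref{rem:K2abBetaConditions} leaves $(2,2),(4,6),(6,4)$ together with $m=n\in\{4,6,8\}$. (Minor slip: for $m=n$ one has $r=\sqrt{(8+a^2)/8}$, not $r=1$; only $r(m-n)=0$ is used.)

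To see why the Galois-conjugation idea fails, write \eqref{eqn:UsefulIndicatorAB2} as $\sum_s(1-\theta_s^2)(z_s+z_s')^2=R$, where $R=16+2a^2+(2a+4\epsilon)\sqrt{8+a^2}$ and $\epsilon\in\{0,\pm1\}$. The conjugated identity is $\sum_s(1-\sigma(\theta_s^2))(z_s+z_s')^2=\bar R$, where $\bar R:=16+2a^2-(2a+4\epsilon)\sqrt{8+a^2}$. For $\epsilon\in\{0,-1\}$, $\bar R$ is a small positive number (about $8$, resp.\ $4a+8$), so $0\le\bar R\le 2S$ holds automatically. Your sample inequality $16+4a^2-4ad\ge 0$ is just $16+2a^2-2a\sqrt{8+a^2}>0$, which is true for every $a$. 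As long as $\theta_s^2$ and $\sigma(\theta_s^2)$ are only used as points on the unit circle, the two identities give essentially nothing beyond $2S\ge R$, which you already had.

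A search over the $x_i+y_i$ does not close the gap either. Take $(4,4)$ with $\Delta<0$. There \eqref{mainequal} reads $216=\sum_{i\ge6}(x_i+y_i)^2+S$, and the block sums allow $\sum_{i\ge6}(x_i+y_i)^2=80,82,\dots,92$. The real-part bound only forces $S\ge 123$, so the window $123\le S\le 136$ is consistent with every constraint you list.

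What is missing is an arithmetic lower bound on how many roots of unity are needed to write $A+B\sqrt t$ (\cite[Proposition 5.6, Theorem 5.7]{larson}). The paper uses this in place of your lower bound $2S\ge R$. In effect, one averages over all of $\mathrm{Gal}(\bbQ(\zeta_N)/\bbQ(\sqrt t))$ rather than applying a single $\sigma$.

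\begin{itemize}
\item For $(4,4)$ with $\Delta<0$ one has $R=144+72\sqrt2$. Project onto $\bbQ(\sqrt2)$: only the $\theta_s^2$ whose order is divisible by $8$ contribute to the $\sqrt2$-coefficient, each by at most $\tfrac12(z_s+z_s')^2$. Hence $S\ge 144>136$.
\item For $t>2$ the contribution of each root of unity to the $\sqrt t$-coefficient is much smaller. This is how the paper obtains $S\ge (2a-4)v\varphi(2t)$, and hence $a\le 10$ outright.
\end{itemize}

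That bound disposes of $(6,6)$ (where $t=38$) and $(8,8)$ (where $t=66$) at once. The separate $t=2$ bound $a\le 7$ removes $(4,4)$, since $8+64=2\cdot 6^2$. Without a lemma of this kind, your argument does not exclude $(4,4)$, $(6,6)$ or $(8,8)$.
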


\begin{proof}
By Equation (\ref{eqn:UsefulIndicatorAB2}) and $\frac{1}{d}=-\frac{1}{4}(m+n)+\frac{1}{4}\sqrt{8 + (m+n)^2}$. If $\Delta = 0$, then we have
\begin{align}
-\sum\nolimits_s\theta_s^2(z_s{+}z_s')^2&=16+\frac{8(m+n)}{d}+4(m+n)^2-\sum\nolimits_s(z_s{+}z_s')^2\nonumber
\\
&=16+2(m+n)^2+2(m+n)\sqrt{8+(m+n)^2}-\sum\nolimits_s(z_s{+}z_s')^2;\label{delta0_1}
\end{align}
if $\Delta \neq 0$, $\frac{\Delta}{d}=\pm4\sqrt{8+(m+n)^2}$, then we get
\begin{align}
-\sum\nolimits_s\theta_s^2(z_s{+}z_s')^2&=16+\frac{\Delta+8(m+n)}{d}+4(m+n)^2-\sum\nolimits_s(z_s{+}z_s')^2\nonumber
\\
&=16+2(m+n)^2+(2(m+n)\pm4)\sqrt{8+(m+n)^2}-\sum\nolimits_s(z_s{+}z_s')^2 \label{delta_1}.
\end{align}
Move the rightmost term in each of the two equations above to the left-hand side, we have
\begin{align}
\Delta=0:\;\;\;\; \sum\nolimits_s(1-\theta_s^2)(z_s{+}z_s')^2&=16+2(m+n)^2+2(m+n)\sqrt{8+(m+n)^2};\label{delta0_2}\\
\Delta \neq 0:\;\;\;\; \sum\nolimits_s(1-\theta_s^2)(z_s{+}z_s')^2&=16+2(m+n)^2+(2(m+n)\pm4)\sqrt{8+(m+n)^2}.\label{delta_2}
\end{align}

When $\Delta = 0.$ Suppose $(m+n)^2+8=2v^2$ for some integer $v>1$.
Then by \cite[Proposition 5.6]{larson}, it requires at least 
\begin{equation*}
16+2(m+n)^2+2(2(m+n))\sqrt{\frac{8 + (m+n)^2}{2}}
\geq 
16+2(m+n)^2+\frac{2}{\sqrt{2}}(2(m+n))(m+n)\\
\end{equation*}
roots of unity to write the right hand side of Equation \eqref{delta0_2}. 
Together with Inequality \eqref{eqn:SquaresUpperBoundAB}, we see
$$
32 + \frac{15}{4}(m+n)^2 \geq  2\sum\nolimits_{s}(z_s + z_s')^2 \geq 16+2(m+n)^2+\frac{2}{\sqrt{2}}(2(m+n))(m+n),$$
which implies $m+n \leq 3$. If  $8 + (m+n)^2=v^2t$, where $v,t$ are integers with $v>1$ and $t>2$ is square free.
Then by \cite[Theorem 5.7]{larson}, it requires at least 
$2(m+n)v\varphi(2t)$ roots of unity to write the right hand side of Equation \eqref{delta0_1}. 
Now by Inequality \eqref{eqn:SquaresUpperBoundAB} and \cite[Lemma 4.2]{schopieray2}, we see
\begin{align*}
16+ \frac{15}{8}(m+n)^2 
&\geq 
\sum\nolimits_s (z_s+z_s')^2 
\geq 
2(m+n)v\varphi(2t)= 
2(m+n) \sqrt{\frac{8 + (m+n)^2}{t}}\varphi(2t)\\&\geq \frac{4}{\sqrt{3}}(m+n)\sqrt{8 + (m+n)^2},
\end{align*}
which implies $m+n \leq 4$.

When $\Delta \neq 0.$ Suppose $(m+n)^2+8=2v^2$ for some integer $v>1$.
Then by \cite[Proposition 5.6]{larson}, it requires at least 
\begin{equation*}
16+2(m+n)^2+2((2(m+n)-4))\sqrt{\frac{8 + (m+n)^2}{2}}
\geq 
16+2(m+n)^2+\frac{2}{\sqrt{2}}((2(m+n)-4))(m+n)\\
\end{equation*}
roots of unity to write the right hand side of Equation \eqref{delta_2}. 
Together with Inequality \eqref{eqn:SquaresUpperBoundAB}, we see
\begin{equation*}
32 + \frac{15}{4}(m+n)^2 \geq  2\sum\nolimits_{s}(z_s + z_s')^2 \geq (16+2(m+n)^2)+\frac{2}{\sqrt{2}}((2(m+n)-4))(m+n),\end{equation*}
which implies $m+n \leq 7$. If   $8 + (m+n)^2=v^2t$, where $v,t$ are integers with $v>1$ and $t>2$ is square free.
Then by \cite[Theorem 5.7]{larson}, it requires at least 
$(2(m+n)-4)v\varphi(2t)$ roots of unity to write the right hand side of Equation \eqref{delta_1}. 
Now by Inequality \eqref{eqn:SquaresUpperBoundAB} and \cite[Lemma 4.2]{schopieray2}, we see
\begin{align}
16+ \frac{15}{8}(m+n)^2 
&\geq 
\sum\nolimits_s (z_s+z_s')^2 
\geq 
(2(m+n)-4)v\varphi(2t)
= 
(2(m+n)-4) \sqrt{\frac{8 + (m+n)^2}{t}}\varphi(2t) \label{3.26}\\&\geq \frac{4}{\sqrt{3}}(m+n-2)\sqrt{8 + (m+n)^2},
\end{align}
which implies $m+n \leq 12$. But $m+n=11,12$ do not satisfy the Inequality (\ref{3.26}), so $m+n\leq 10$.

By Remark \ref{rem:K2abBetaConditions}, when $m+n<10$, $m=n$ and $4\mid(m+n)$, so $(m,n)$ must be one of $(2,2),(4,4),(4,6),(6,4)$.
From the above discussion, if  $(m+n)^2+8=2v^2$ for some integer $v>1$, then $m+n\leq 7$, so  $(m,n)\neq (4,4)$. The proof of the theorem is complete.
 \end{proof}

\subsection{The case where $\nu_2(g)=\nu_2(gh)= -1$}
\begin{prop}
Assume $\nu_2(g)=\nu_2(gh)= -1$, then we  have the following equality:
\begin{equation}
-\sum\nolimits_s (\theta_s+\overline{\theta_s})(z_s{+}z_s')^2
= \frac{3}{4}(m+n)^2+(m+n)\sqrt{8 + (m+n)^2}+\frac{1}{4}r^2(m-n)^2 -2\sum_{i=12}^{15}(x_i+y_i)^2. \label{eqn:UsefulIndicatorAB} 
\end{equation}
\end{prop}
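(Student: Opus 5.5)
The plan is to use the first Frobenius--Schur indicators of $X$ and $Y$ rather than the second ones. In the case $\nu_2(g)=\nu_2(gh)=1$ we used $\nu_2$; here the twists $\theta_i$ for $6\le i\le 11$ and $16\le i\le 19$ are $\pm i$, so their real parts vanish. Since $X,Y\neq\unit_\cC$, \cite[Theorem 4.1]{ngschauenburg} gives
\begin{equation*}
0=\nu_1(X)\dim(\cC)=\Tr_{\cZ(\cC)}(\theta_{\cI(X)}),\qquad 0=\nu_1(Y)\dim(\cC)=\Tr_{\cZ(\cC)}(\theta_{\cI(Y)}).
\end{equation*}
I will add these two equations and then add the complex conjugate of the sum. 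This replaces every twist $\theta$ by $\theta+\overline{\theta}$, which equals $2$ for $1\le i\le 5$, $0$ for $6\le i\le 11$ and $16\le i\le 19$, and $-2$ for $12\le i\le 15$, by Proposition \ref{prop:TwoThetasAB}.

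Next I will expand each trace using the multiplicities from Proposition \ref{biao} and the dimensions. For $i=1,2,3$ the dimensions are $\dim(A_i)=1+(x_i+y_i)d$, and for $i=4,5$ they are $\dim(A_i)=2+\frac{m+n}{2}d$, as in Lemma \ref{lem:Ostrik2.13}. For $12\le i\le 15$ we have $\dim(A_i)=1+(x_i+y_i)d$, since each such $A_i$ occurs once in $\cI(h)$ and in no other $\cI$ of an invertible object. Finally $\dim(Z_s)=(z_s+z_s')d$. Summing the $X$ and $Y$ contributions produces factors $(x_i+y_i)$ throughout, and the sum splits into a rational part and a multiple of $d$.

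For the rational part, the contribution of $A_1,\dots,A_5$ is $2\bigl[(x_1+y_1)+(x_2+y_2)+(x_3+y_3)+2(x_4+y_4)+2(x_5+y_5)\bigr]=6(m+n)$, by Equations \eqref{eqn:HorizontalSumsAB4} and \eqref{eqn:HorizontalSumsAB3}. The contribution of $A_{12},\dots,A_{15}$ is $-2\sum_{i=12}^{15}(x_i+y_i)=-2\bigl(2(m+n)-(m+n)\bigr)=-2(m+n)$, by Equation \eqref{eqn:VerticalSumsAB3} together with \eqref{eqn:HorizontalSumsAB4}. The rational part is therefore $4(m+n)$.

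For the $d$-part, the first five objects give $2\bigl[3\bigl(\tfrac{m+n}{2}\bigr)^2+\beta^2+\overline{\beta}^2\bigr]=\tfrac74(m+n)^2+\tfrac{r^2}{4}(m-n)^2$. The remaining contributions are $-2\sum_{12}^{15}(x_i+y_i)^2$ and $\sum_s(\theta_s+\overline{\theta_s})(z_s+z_s')^2$.

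I will then divide the whole identity by $d$ and use $\frac1d=\frac14\bigl(\sqrt{8+(m+n)^2}-(m+n)\bigr)$. This turns $\frac{4(m+n)}{d}$ into $(m+n)\sqrt{8+(m+n)^2}-(m+n)^2$, and combining it with $\tfrac74(m+n)^2$ gives exactly the right-hand side of \eqref{eqn:UsefulIndicatorAB}.

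The step that needs the most care is the bookkeeping. One must confirm that each $A_i$ with $6\le i\le 19$ has constant term $1$ or $2$ exactly as listed, so that the imaginary twists cancel completely. One must also use the specific decomposition chosen in Proposition \ref{biao}; the other seven decompositions (for instance $\cI(h)=2A_{12}\oplus A_4\oplus A_5$) should be checked to give the same identity. No deeper input is needed.
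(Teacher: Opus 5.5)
Your proposal is correct and follows the paper's proof essentially step for step. You sum $\Tr_{\cZ(\cC)}(\theta_{\cI(X)})+\Tr_{\cZ(\cC)}(\theta_{\cI(Y)})=0$ with its complex conjugate so that the $\pm i$ twists drop out. You then use Equations \eqref{eqn:HorizontalSumsAB4}, \eqref{eqn:HorizontalSumsAB3} and \eqref{eqn:VerticalSumsAB3} to obtain $0=4(m+n)+\bigl(\tfrac74(m+n)^2+\tfrac{r^2}{4}(m-n)^2-2\sum_{i=12}^{15}(x_i+y_i)^2+\sum_s(\theta_s+\overline{\theta_s})(z_s+z_s')^2\bigr)d$, and you divide by $d$. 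Your explicit computation of the rational part as $6(m+n)-2(m+n)$ is exactly the bookkeeping the paper leaves implicit.
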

\begin{proof}
By considering  the first Frobenius-Schur indicators  of  simple objects $X$ and $Y$ \cite{ngschauenburg} , we have 
\begin{align*}
0&=\nu_1(X)=\Tr_{\mathcal{Z}(\cC)}(\theta_{\cI(X)})\\
&= x_1\Bigl(1+\frac{m+n}{2}d\Bigr)+x_2(1+\beta d)+x_3(1+\overline{\beta}d)+ x_4\Bigl(2+\frac{m+n}{2}d\Bigr)+x_5\Bigl(2+\frac{m+n}{2}d\Bigr) \\
&\quad+\sum_{i=6}^9x_i\theta_{i}(1+(x_i+y_i)d) +\sum_{i=10}^{11}x_i\theta_{i}(2+(x_i+y_i)d) 
   -\sum_{i=12}^{15}x_i(1+(x_i+y_i)d)\\
  &\quad +\sum_{i=16}^{19}x_i\theta_{i}(1+(x_i+y_i)d)+ \sum\nolimits_s \theta_s z_s (z_s+z_s')d, 
\end{align*}
\vspace{-0.5cm}
\begin{align*}
0&=\nu_1(Y)=\Tr_{\mathcal{Z}(\cC)}(\theta_{\cI(Y)})\\
&= y_1\Bigl(1+\frac{m+n}{2}d\Bigr)+y_2(1+\beta d)+y_3(1+\overline{\beta}d)+ y_4\Bigl(2+\frac{m+n}{2}d\Bigr)+y_5\Bigl(2+\frac{m+n}{2}d\Bigr) \\
&\quad+\sum_{i=6}^9y_i\theta_{i}(1+(x_i+y_i)d) +\sum_{i=10}^{11}y_i\theta_{i}(2+(x_i+y_i)d) 
   -\sum_{i=12}^{15}y_i(1+(x_i+y_i)d)\\
  &\quad +\sum_{i=16}^{19}y_i\theta_{i}(1+(x_i+y_i)d)+ \sum\nolimits_s \theta_s z'_s (z_s+z_s')d.
\end{align*}
Notice that $\theta_i=\pm i$ for $6\leq i\leq 11$ and $16\leq i\leq 19$ if $\nu_2(g)=\nu_2(gh)= -1$,  together with Equations \eqref{eqn:HorizontalSumsAB4}, \eqref{eqn:HorizontalSumsAB3} and \eqref{eqn:VerticalSumsAB3},
we obtain
\begin{align*}
0=&\Tr_{\cZ(\cC)}(\theta_{\cI(X)})+\Tr_{\cZ(\cC)}(\theta_{\cI(Y)})+\overline{\Tr_{\cZ(\cC)}(\theta_{\cI(X)})+\Tr_{\cZ(\cC)}(\theta_{\cI(Y)})}\\
=&4(m+n)+\frac{7}{4}(m+n)^2d+\frac{1}{4}r^2(m-n)^2d-2\sum_{i=12}^{15}(x_i+y_i)^2d+\sum\nolimits_s (\theta_s+\overline{\theta_s})(z_s{+}z_s')^2d.
\end{align*}
Dividing both sides of the above equation by 
$d$ and rearranging, we obtain Equation \eqref{eqn:UsefulIndicatorAB}.
\end{proof}

\begin{lem}\label{ineq}
Let \(t\) be a square-free positive integer.  If a prime $p$ divides $t$ and $p\geq 19$, then
$
\frac{\varphi(2t)}{\sqrt{t}}>4.
$
\end{lem}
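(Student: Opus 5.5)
The plan is to reduce everything to a product of local factors, one for each prime dividing $t$, using the multiplicativity of Euler's function $\varphi$. Since $t$ is square-free and $p\mid t$, I would write $t=ps$ with $s$ square-free and $\gcd(p,2s)=1$; the latter holds because $p\geq 19$ is odd and $p\nmid s$. Then $\varphi(2t)=\varphi(p)\varphi(2s)=(p-1)\varphi(2s)$, so
\[
\frac{\varphi(2t)}{\sqrt t}=\frac{p-1}{\sqrt p}\cdot\frac{\varphi(2s)}{\sqrt s}.
\]
It then suffices to show two bounds: $\frac{p-1}{\sqrt p}>4$ for every prime $p\geq 19$, and $\frac{\varphi(2s)}{\sqrt s}\geq 1$ for every square-free $s$.

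For the first factor, the function $x\mapsto (x-1)/\sqrt{x}=\sqrt{x}-1/\sqrt{x}$ is increasing for $x>0$. It therefore suffices to check $p=19$. There $18/\sqrt{19}>4$ is equivalent to $324>16\cdot 19=304$, which holds. This is exactly where the hypothesis $p\geq 19$ enters: for $p=17$ one gets $16/\sqrt{17}<4$.

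For the second factor I split according to the parity of $s$.
\begin{itemize}
\item If $s$ is odd, then $\varphi(2s)=\varphi(s)=\prod_{q\mid s}(q-1)$, so $\frac{\varphi(2s)}{\sqrt s}=\prod_{q\mid s}\frac{q-1}{\sqrt q}$ with each $q\geq 3$. Each factor is at least $2/\sqrt3>1$ by the same monotonicity, and the empty product is $1$.
\item If $s=2s'$ with $s'$ odd, then $\varphi(2s)=\varphi(4s')=2\varphi(s')$. Hence $\frac{\varphi(2s)}{\sqrt s}=\sqrt2\,\frac{\varphi(s')}{\sqrt{s'}}\geq\sqrt2$ by the odd case.
\end{itemize}
Note that $\varphi(2s)=2\varphi(s)$ when $s$ is even, which only helps the bound.

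Combining the two bounds gives $\frac{\varphi(2t)}{\sqrt t}\geq\frac{p-1}{\sqrt p}>4$, which is the claim. There is no real obstacle here. The only points needing care are the correct handling of the factor $2$ in $\varphi(2t)$ in the two parity cases, and the numerical check at $p=19$.
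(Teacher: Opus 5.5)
Your proof is correct and follows essentially the same route as the paper: write $\varphi(2t)/\sqrt{t}$ as a product of local factors ($2^{\varepsilon/2}$ and $(q-1)/\sqrt{q}$ for odd primes $q \mid t$), bound each factor below by $1$, and use $(p-1)/\sqrt{p}>4$ for the factor at $p$. Your monotonicity argument and the check $18^2=324>304$ at $p=19$ supply a numerical step that the paper only asserts.
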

\begin{proof}
Since \(t\) is square-free, we may write
$t=2^{\varepsilon}\prod_{i=1}^r q_i$,
where \(\varepsilon\in\{0,1\}\) and \(q_1,\dots,q_r\) are distinct odd primes.
Then $\varphi(2t)=\varphi(2^{\varepsilon+1})\prod_{i=1}^r \varphi(q_i)=2^\varepsilon\prod_{i=1}^r(q_i-1)$.
As $\varepsilon\in\{0,1\}$, it follows that
\begin{equation*}
\frac{\varphi(2t)}{\sqrt{t}}=2^{\varepsilon/2}\prod_{i=1}^r\frac{q_i-1}{\sqrt{q_i}}\geq
\prod_{i=1}^r\frac{q_i-1}{\sqrt{q_i}}.\end{equation*}
And for every odd prime $q_i$,
we have $\frac{q_i-1}{\sqrt{q_i}}\ge 1$,
hence $\prod_{i=1}^r\frac{q_i-1}{\sqrt{q_i}}\geq\frac{p-1}{\sqrt{p}}$. Therefore, $\frac{\varphi(2t)}{\sqrt{t}}\geq
\frac{p-1}{\sqrt{p}}>4$.
\end{proof}

\begin{thm}\label{thm2}
If $\nu_2(g)=\nu_2(gh)= -1$ and $m\neq0,n\neq0$,   then $(m,n)$ satisfies one of the following four Pell equations:
\begin{equation*}
8+(m+n)^2=2v^2,  \;\;  8+(m+n)^2=6v^2, \;\;
8+(m+n)^2=3v^2,  \;\;   8+(m+n)^2=11v^2,
\end{equation*}
where $v$ is a positive integer greater than $1$. In particular, if $m = n$, then $(m, m)$ can only arise from the first two equations.
\end{thm}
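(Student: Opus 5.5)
Write $a:=m+n$ and $8+a^2=v^2t$ with $t$ square-free, and work from Equation \eqref{eqn:UsefulIndicatorAB}. Its left-hand side $-\sum_s(\theta_s+\overline{\theta_s})(z_s+z_s')^2$ is, up to sign, a sum of $2\sum_s(z_s+z_s')^2$ roots of unity. Its right-hand side is
\[
\tfrac34 a^2+\tfrac14 r^2(m-n)^2-2\sum_{i=12}^{15}(x_i+y_i)^2 \;+\; a\,v\sqrt{t}.
\]
The irrational part is $a v\sqrt t$, and $a\neq 0$.

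First I dispose of the case $v=1$. Then $8+a^2=t$ is square-free, and I use the same lower bound from \cite[Theorem 5.7]{larson} (or its $v=1$ analogue) as in the proof of Theorem \ref{thm1}. For $t>2$, writing the right-hand side needs at least $a\,v\,\varphi(2t)$ roots of unity. Comparing with $2\sum_s(z_s+z_s')^2\le 32+\frac{15}{4}a^2$ from Inequality \eqref{eqn:SquaresUpperBoundAB} gives
\[
a\sqrt{8+a^2}\,\frac{\varphi(2t)}{\sqrt t}\;\le\; 32+\tfrac{15}{4}a^2 .
\]
Since $a\sqrt{8+a^2}>a^2$, this forces $\varphi(2t)/\sqrt t<\frac{15}{4}+32/a^2$, which is roughly below $4$ once $a$ is moderate.

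Lemma \ref{ineq} then says every prime divisor of $t$ is at most $17$. So $t$ is a square-free product of primes from $\{2,3,5,7,11,13,17\}$ with $\varphi(2t)/\sqrt t$ bounded. I enumerate these finitely many $t$ and discard those whose ratio is too large, for example any $t$ with two odd primes $\ge 5$, or $t=13,17$. For each survivor I check solvability of $8+a^2=v^2t$ modulo small numbers. Since $v^2t-a^2=8$, $t$ must be a norm-type value: $t\equiv 3\pmod 8$ or $t$ even with constraints. Checking mod $3$, $5$ and $8$ should kill $t=5,7,10,14,15,\dots$ and leave exactly $t\in\{2,3,6,11\}$. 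The case $t=1$ is excluded because $d\notin\bbQ$ (Lemma \ref{lemfirst}). The same argument excludes $v=1$ for all but the few smallest $a$, and those I check directly, which is why $v>1$ appears in the statement.

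For the final claim, take $m=n$. Remark \ref{rem:K2abBetaConditions} gives $4\mid a$, so $a^2\equiv 0\pmod{16}$ and $8+a^2\equiv 8\pmod{16}$. Hence $v^2t\equiv 8\pmod{16}$, which forces $t$ even, so $t\in\{2,6\}$. This excludes $t=3$ and $t=11$.

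The main obstacle is the middle step. I need a uniform inequality that holds for all $a$, not just large $a$. Small $a$ (say $a\le 10$) will need a direct check, and for the exceptional $t$ near the threshold, such as $t=5,7$, I must bound the ratio carefully (precisely, not just with the rough $\approx 4$ bound) or rule them out by congruences. If the bound alone is too weak, I plan to use that $8+a^2\equiv 1$ or $0\pmod 3$ and the local solvability of $x^2-ty^2=-8$ to eliminate those $t$.
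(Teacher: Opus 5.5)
Your strategy is the paper's. You play Larson's lower bound on the number of roots of unity needed for the right-hand side of \eqref{eqn:UsefulIndicatorAB} against Inequality \eqref{eqn:SquaresUpperBoundAB}, use Lemma \ref{ineq} to reduce to finitely many square-free $t$, and finish with congruences. Your uniform treatment of $m=n$ is correct and tidier than the paper's separate check of $t=3,11$: $4\mid a$ forces $v^2t\equiv 8\pmod{16}$, hence $t$ is even.

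But as written the main step has a genuine hole. You invoke Remark \ref{rem:K2abBetaConditions} only for $m=n$, whereas the parity $2\mid a$, which that remark gives in both cases $m=n$ and $m\neq n$, is needed throughout. Without it your disposal of $v=1$ fails. For $a=3$ and $a=5$ you get $8+a^2=17$ and $33$, i.e.\ $(t,v)=(17,1)$ and $(33,1)$. Both pass your inequality ($48\le 65.75$ and $100\le 125.75$). Both are honest integer solutions, so no congruence can remove them. Both have $t\equiv 1\pmod 8$, contradicting your claim that $t\equiv 3\pmod 8$ or $t$ is even. So ``the same argument excludes $v=1$ \dots\ and those I check directly'' does not go through. (Integrality of the rational part $\frac34a^2+\frac14r^2(m-n)^2$, which the paper checks before applying Larson's theorem, also rests on $2\mid a$.)

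The missing line is this. Since $2\mid a$, we get $4\mid 8+a^2=v^2t$ with $t$ square-free, so $2\mid v$ and $v\ge 2$ automatically; this is why the paper may take $v>1$ for granted. Writing $a=2u$, $v=2w$ gives $u^2-tw^2=-2$, which for odd $t$ forces $w$ odd and $t\equiv 3\pmod 8$.

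There are two smaller inaccuracies in the elimination step.
\begin{enumerate}
\item The ratio does not discard $t=13$: its value $\varphi(26)/\sqrt{13}\approx 3.33$ is below $\frac{15}{4}$, the limit of your bound. Nor does it discard $t=17$ ($\approx 3.88$) for moderate $a$. The paper keeps both in its list, and they are removed only by $t\equiv 3\pmod 8$.
\item Reducing $8+a^2=tv^2$ modulo $3$, $5$ and $8$ does not remove $t=14$ or $t=42$. For instance, $(a,v)=(36,16)$, resp.\ $(8,4)$, satisfies $8+a^2\equiv tv^2\pmod{120}$. The paper kills every $t$ divisible by $7$ with $a^2\equiv -8\equiv 6\pmod 7$, a quadratic non-residue, just as mod $5$ kills $10$ and $30$. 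Your fallback to local solvability works if applied at $7$, or via a further $2$-adic descent.
\end{enumerate}
With these repairs your argument coincides with the paper's.
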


\begin{proof}
Assume $8 + (m+n)^2=v^2t$, where $v,t$ are integers with $v>1$ and $t$ is square free. 
By Remark \ref{rem:K2abBetaConditions}, $\frac{3}{4}(m+n)^2\in \bbZ$ and $\frac{r^2}{4}(m-n)^2\in \bbZ$, then it follows from \cite[Theorem 5.7]{larson} that it requires at least 
$(m+n)v\varphi(2t)$ roots of unity to express the right hand side of Equation \eqref{eqn:UsefulIndicatorAB}. Let $q:=m+n$, then we have 
\begin{equation*}
32+ \frac{15}{4}q^2 
\geq 
2\sum\nolimits_s (z_s+z_s')^2 
\geq 
qv\varphi(2t)
= 
q \sqrt{\frac{8 + q^2}{t}}\varphi(2t),
\end{equation*}
by Inequality \eqref{eqn:SquaresUpperBoundAB}. Thus, we obtain
\begin{equation}
  \frac{\varphi(2t)}{\sqrt t}
\le
\frac{\frac{32}{q^2}+\frac{15}{4}}
{\sqrt{1+\frac8{q^2}}}.
\label{t2}  
\end{equation}
By Remark \ref{rem:K2abBetaConditions},  either $q=4,8$, or $q\ge10$. If $q=4$, then
$t=6$; if $q=8$, then $t=2$.
Assume $q\ge10$ below. Since the right-hand side of Equation \eqref{t2} is decreasing for \(q>10\),
\begin{equation*}
\frac{\varphi(2t)}{\sqrt t}
\le
\frac{\frac{32}{100}+\frac{15}{4}}
{\sqrt{1+\frac8{100}}}
=
\frac{407}{100\sqrt{27/25}}
<3.92.
\end{equation*}
By Lemma \ref{ineq}, we know that prime divisors of $t$ belong  to $\{2,3,5,7,11,13,17\}$. 
A direct inspection shows that the set of  square-free integers satisfying $\frac{\varphi(2t)}{\sqrt t}<3.92$
is
\begin{equation*}
 t\in\{1,2,3,5,6,7,10,11,13,14,15,17,21,30,33,39,42\}.  
\end{equation*}
We know $t>1$ by Lemma \ref{lemfirst}.

Assume $t$ is odd. Notice that we have 
$q^2-tv^2=-8$.
 Write $q=2u$ as $q$ is even,  then $v$ is also even, let $v=2w$, and 
$u^2-tw^2=-2$. 
For an odd prime factor $p$ of $t$, we have 
$u^2\equiv-2\pmod p.$
Hence $\left(\frac{-2}{p}\right)=1$, we have $p\equiv1,3\pmod8$ by the quadratic reciprocity law, thus 
$t\in\{3,11,17,33\}$.
If $t=17,33$, then $t\equiv1\pmod 8$, and 
$u^2-w^2\equiv6\pmod8$, which has no solution  as the quadratic residues modulo \(8\) are \(0,1,4\). Hence  $t\in\{3,11\}$.
Now suppose \(t\) is even. 
If \(5\mid t\), then  $q^2\equiv-8\equiv2\pmod5$, it is
impossible as $2$ is not a quadratic residue modulo $5$. 
 A similar argument shows $7\nmid t$, therefore
$t\in\{2,6\}$. 

In summary, we obtain 
$t\in\{2,3,6,11\}$. 
Next, we show that 
 $4\mid x$ for $t=2,6$ and that $4\nmid x$ for $t=3,11$. If so,  Remark \ref{rem:K2abBetaConditions} implies   that if $m = n$, then $m + n$ can only come from the Pell equations of the form $8+x^2=2v^2$ and $8+x^2=6v^2$. Here we give a proof for $t=2,3$, the proof for the other two is the same.
For equation $8+x^2=2v^2$, $x$ must be  even, let $x = 2k$. Then   $4 + 2k^2 = v^2$, so $v= 2w$ is also even, which means  $4\mid x$. For  equation $8+x^2=3v^2$, we have  $x^2 \equiv 3v^2 \equiv -v^2 \pmod{4}$. If $v$ is odd, then $x^2\equiv-v^2 \equiv 3 \pmod{4}$, it  is impossible. Thus $v$ is even  and $x^2 \equiv 0 \pmod 4$, so $x$ is even. If $x = 4k$, then  $3v^2 \equiv 8 \pmod{16}$, hence $v^2 \equiv 8 \pmod{16}$, impossible.
\end{proof}

\begin{rem}\label{pell}
We note that each of the four Pell equations in Theorem \ref{thm2} has infinitely many solutions.
The set of all solutions with 
$m+n\leq1000$ is $\{2,4,6,8,10,38,44,48,126,142,280,436,530\}$, then Remark \ref{rem:K2abBetaConditions} implies $m+n\in \{4,8,10,38,44,48,126,142,280,436,530\}.$ Moreover, if $m+n\leq 100$, all possible pairs $(m,n)$ are $(0,0),(1,0),(0,1),(2,0),(0,2),(2,2),(4,4),(4,6),(6,4),(18,20),(20,18),(22,22),(20,24),(24,20),(24,24)$.
\end{rem}

\begin{thm}
\label{thm:PseudounitaryBoundselfdual}
Let $\cC$ be a  fusion category with the fusion rules \ref{selfdual}.
Then $(m,n)$ must be equal to one of
$(0,0),(1,0),(0,1),(2,0),(0,2),(2,2),(4,4),(4,6),(6,4)$ if $m+n<38$. 
\end{thm}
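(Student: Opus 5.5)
This is an assembly of the results already proved in this section, split according to whether $m$ or $n$ vanishes and then according to the Frobenius--Schur indicator $\nu_2(g)$.

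\emph{Case $m=0$ or $n=0$.} Proposition \ref{notzero} forces $(m,n)\in\{(0,0),(1,0),(0,1),(2,0),(0,2)\}$. These pairs all appear in the list.

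\emph{Case $m\neq0$ and $n\neq0$.} By Lemma \ref{pseudounitary} we may assume $\cC$ is pseudo-unitary, so Propositions \ref{biao} and \ref{prop:TwoThetasAB} apply. By the remark following Proposition \ref{prop:TwoThetasAB}, we always have $\nu_2(g)=\nu_2(gh)$, and this common value is $\pm1$. This gives two subcases.
\begin{itemize}
\item If $\nu_2(g)=\nu_2(gh)=1$, Theorem \ref{thm1} gives $(m,n)\in\{(2,2),(4,6),(6,4)\}$, with no restriction on $m+n$ needed.
\item If $\nu_2(g)=\nu_2(gh)=-1$, Theorem \ref{thm2} shows that $q:=m+n$ satisfies $8+q^2=tv^2$ with $t\in\{2,3,6,11\}$ and $v>1$.
\end{itemize}

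In the second subcase I would list the small solutions of these four Pell equations directly:
\begin{itemize}
\item $t=2$: $q^2+8=2v^2$ gives $q\in\{4,8,44,\dots\}$.
\item $t=3$: $q\in\{2,10,38,\dots\}$.
\item $t=6$: $q\in\{4,\dots\}$, with the next solution $q=48$.
\item $t=11$: $q\in\{6,\dots\}$, with later solutions beyond $38$.
\end{itemize}
So for $q<38$ the candidates are $q\in\{2,4,6,8,10\}$, in agreement with Remark \ref{pell}. Now apply Remark \ref{rem:K2abBetaConditions}:
\begin{itemize}
\item If $m\neq n$, then $q\ge10$, and $q=10$ forces $(m,n)\in\{(4,6),(6,4)\}$.
\item If $m=n$, then $4\mid q$, so $q\in\{4,8\}$ and $(m,n)\in\{(2,2),(4,4)\}$.
\item $q=2$ and $q=6$ are excluded: with $m,n\neq0$, $q=2$ forces $m=n=1$, which is not divisible by $4$, and $q=6$ is neither divisible by $4$ nor at least $10$.
\end{itemize}

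Taking the union over the cases gives exactly the stated list. The hypothesis $m+n<38$ enters only here, to cut off the next Pell solutions, since $(18,20)$ and $(20,18)$ satisfy every constraint derived so far. I expect the only real work to be checking that no Pell solution with $q<38$ has been missed; this is a finite verification of the continued-fraction recursion for each $t$, or simply a brute-force check of $q\le 36$ against $8+q^2\in\{2v^2,3v^2,6v^2,11v^2\}$.
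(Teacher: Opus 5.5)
Your proof is correct and is essentially the paper's argument. It uses Proposition \ref{notzero} when $m$ or $n$ vanishes, Theorem \ref{thm1} when $\nu_2(g)=\nu_2(gh)=1$, and Theorem \ref{thm2} with Remark \ref{rem:K2abBetaConditions} when $\nu_2(g)=\nu_2(gh)=-1$, writing out explicitly the Pell enumeration that the paper cites as Remark \ref{pell}. One small slip does not affect the conclusion: $q=4$ and $q=44$ solve $8+q^2=6v^2$, not $8+q^2=2v^2$ (whose solutions are $q=0,8,48,280,\dots$), so the next $t=6$ solution after $4$ is $44$, not $48$; the union of solutions below $38$ is still $\{2,4,6,8,10\}$.
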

\begin{proof}
If either $m$ or $n$ is zero, then  $(m,n)$ must be one of $(0,0),(0,1),(1,0),(0,2),(2,0)$ by Proposition \ref{notzero}.
Otherwise, if $\nu_2(g)=\nu_2(gh)= 1$, then $(m,n)$ must be one of $(2,2),(4,6),(6,4)$ by Theorem \ref{thm1};
 and if $\nu_2(g)=\nu_2(gh)= -1$ and $m+n<38$,   then  $(m,n)$ must be one of  $(2,2),(4,4),(4,6),(6,4)$ by Theorem \ref{thm2}, Remark \ref{rem:K2abBetaConditions} and Remark \ref{pell}.
\end{proof}


\section{The Non-Self-Dual Case}\label{section-nonself-dual}

Given a non-self-dual $\bbZ/2\bbZ\times\bbZ/2\bbZ$-quadratic fusion category $\cC$ of rank six, let $\cO(\cC)=\{\unit_{\cC},g,h,gh,X,X^*\}$ and fusion rules are determined by
\begin{align}
g^2= h^2 = \unit_{\cC}, \quad g \otimes X = X^*= X \otimes g,\quad
X \otimes X^* = \unit_{\cC} \oplus h \oplus m X \oplus m X^*= X^* \otimes X.  \label{notselfdual} \tag{$R'(m)$}
\end{align}
Let us write \ref{notselfdual} for such a fusion ring.  
Set $d:= \dim(X)$. Then $d^2 = 2+2md$, $d = m + \sqrt{2 + m^2}$
and the formal codegrees of $\cC$ are:
\begin{equation}
f_1 = 8 + 4m^2 + 2m\sqrt{8 + 4m^2},\;
f_2 = 8 + 4m^2 - 2m\sqrt{8 + 4m^2},\;
f_3 =f_4= 8,\;
f_5 = f_6=4.
\end{equation}

Same as the self-dual case, we have the following lemma. 
\begin{lem}
 There are distinct simple objects 
$ 1_{\mathcal{Z}(\cC)},A_1,A_2,A_3,A_4,A_5 \in \cO(\cZ(\cC))$
such that
$
\cI(\unit_\cC) = \unit_{\mathcal{Z}(\cC)} \oplus A_1\oplus A_2\oplus A_3\oplus A_4\oplus A_5$ with \begin{equation*}
\dim(A_1) = 1 + md,\;\;
\dim(A_2) = 
\dim(A_3) =1+\frac{m}{2}d,\;\;
\dim(A_4)=\dim(A_5) = 2 + md.
\end{equation*}
\end{lem}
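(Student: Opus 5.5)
The plan is to mirror the proof of Lemma \ref{lem:Ostrik2.13}. By Lemma \ref{pseudounitary}, $\cC$ may be taken pseudo-unitary, so $\dim(\cC)=\FPdim(\cC)=f_1$ and $d=\FPdim(X)=m+\sqrt{2+m^2}$. Since $\cK_0(\cC)$ is commutative, every irreducible representation $\chi$ is one-dimensional. By \cite[Theorem 2.13]{ostrik2015pivotal}, each $\chi$ then gives a simple object $A_\chi\in\cO(\cZ(\cC))$ with $[\cI(\unit_\cC),A_\chi]=1$ and $\dim(A_\chi)=f_1/f_\chi$. The trivial-type character $\FPdim$ gives $\unit_{\cZ(\cC)}$. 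Distinct characters give non-isomorphic $A_\chi$ because the correspondence of \cite[Theorem 2.13]{ostrik2015pivotal} is injective. We also have $\dim\cI(\unit_\cC)=\dim(\cC)=\sum_\chi f_1/f_\chi$. Hence $\cI(\unit_\cC)$ is the multiplicity-free sum of the six objects $\unit_{\cZ(\cC)},A_1,\dots,A_5$ attached to the codegrees $f_1,\dots,f_6$.

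The key step is computing the ratios $f_1/f_i$. First justify the list of formal codegrees. As in Lemma \ref{pseudounitary}, compute the multiplication matrix of $\sum_V V V^*$ and take its characteristic polynomial via \cite[Lemma 2.6]{ostrik2009formal}. Equivalently, list the six characters of $R'(m)$ directly. On the group part, $g,h$ act by signs. $X$ must satisfy $\chi(X^*)=\chi(g)\chi(X)$, and $\chi(X)\chi(X^*)=1+\chi(h)+m(\chi(X)+\chi(X^*))$.
\begin{itemize}
\item For $\chi(g)=1$ and $\chi(h)=1$, we get $x^2=2+2mx$, which gives $f_1,f_2$.
\item For $\chi(g)=-1$ and $\chi(h)=1$, we get $-x^2=2$, so $x=\pm i\sqrt2$ and $f=8$.
\item For $\chi(h)=-1$, consistency forces $x=0$, and $f=4$.
\end{itemize}
Then compute $\dim(A_1)=f_1/f_2$. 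Using $f_1f_2=(8+4m^2)^2-4m^2(8+4m^2)=8(8+4m^2)/ (\cdot)$, simplify to $f_1^2/(64+32m^2-\dots)$. More directly, $f_1=2d^2+4$ after checking $f_1=\FPdim(\cC)=4+2d^2$, and use $d^2=2+2md$ to reduce $f_1/f_2$ to $1+md$. Similarly $f_1/8=(4+2d^2)/8=(8+4md)/8=1+\tfrac{m}{2}d$ and $f_1/4=2+md$.

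The only real obstacle is the algebra showing $f_1/f_2=1+md$. I would handle it by noting $f_1f_2=(8+4m^2)^2-4m^2(8+4m^2)=8(8+4m^2)$. Then
\[
\frac{f_1}{f_2}=\frac{f_1^2}{8(8+4m^2)},
\]
and I would verify it equals $1+md=1+m^2+m\sqrt{2+m^2}$ by expanding $f_1=8+4m^2+4m\sqrt{2+m^2}$. The remaining checks are to confirm the $A_i$ are nonunit, which holds since their dimensions exceed $1$, and to label them as in the statement.
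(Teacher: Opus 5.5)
Your proposal is correct and is essentially the paper's argument: pseudo-unitarity from Lemma \ref{pseudounitary}, the six one-dimensional characters of $R'(m)$ with codegrees $f_1,f_2,8,8,4,4$, and \cite[Theorem 2.13]{ostrik2015pivotal} giving $\dim(A_\chi)=f_1/f_\chi$, where your identity $f_1f_2=8(8+4m^2)$ correctly yields $f_1/f_2=1+md$. Two small points. First, the vanishing $\chi(X)=0$ when $\chi(h)=-1$ comes from $h\otimes X\cong X$; the relation for $X\otimes X^*$ alone would also allow $\chi(X)=2m$ when $\chi(g)=1$. Second, for $m=0$ one has $\dim(A_1)=\dim(A_2)=\dim(A_3)=1$, so distinctness from $\unit_{\cZ(\cC)}$ must come from the bijectivity in Ostrik's theorem, which you already invoke, rather than from the dimensions exceeding $1$.
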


\begin{rem} \label{remark-m}
\label{K2abBetaConditions}
Since $\dim(A_2)$ and $\dim(A_3) \in \bbZ[d]$,  $2\mid m$. 
\end{rem}
If $m=0$, $R'(0)$ can be categorified by Lemma \ref{R(0)}. In what follows, we always assume $m\neq 0$. Since the decomposition of induction functor is identical  to the self-dual case,  we omit the proof here and use the same notations as Subsection \ref{subsection-decomposition}.
 
\subsection{The case where $\nu_2(g)=\nu_2(gh)= 1$}

\begin{prop}
If $\nu_2(g)=\nu_2(gh)= 1$, then we have the following equality:
\begin{align}
-\sum\nolimits_s\theta_s^2(z_s{+}z_s')^2=16+\frac{16m}{d}+16m^2-\sum\nolimits_s(z_s{+}z_s')^2. \label{11}
\end{align}
\end{prop}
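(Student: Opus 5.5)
The plan is to imitate the proof of the analogous identity \eqref{eqn:UsefulIndicatorAB2} in the self-dual case, with one simplification: since $X$ is not self-dual, $\nu_2(X)=\nu_2(X^*)=0$. Hence by \cite[Theorem 4.1]{ngschauenburg},
\[
\Tr_{\cZ(\cC)}(\theta^2_{\cI(X)})=\Tr_{\cZ(\cC)}(\theta^2_{\cI(X^*)})=0,
\]
so the term $\Delta$ of the self-dual case is simply $0$ here. Throughout I use the notation of Proposition \ref{biao}, with $y_i,z'_s$ now the multiplicities of $X^*$.

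\textbf{Step 1: the main equation.} First I establish the non-self-dual analogue of \eqref{mainequal}. By \eqref{FIY}, $\cF(\cI(X))=\bigoplus_V V\otimes X\otimes V^*$. The invertible objects conjugate $X$ to itself: the stabilizer of $X$ has order $2$, $hX=X$, and $gXg=X$. They therefore contribute $4X$. The remaining terms are $XXX^*+X^*XX^*$. Using $X^*=gX$ and $X^2=g+gh+mX+mX^*$, a short computation gives $X X^*X=2X+m(\unit+g+h+gh)+2m^2(X+X^*)$. From this,
\[
[\cI(X),\cI(X)]=[\cI(X^*),\cI(X^*)]=8+4m^2 \quad\text{and}\quad [\cI(X),\cI(X^*)]=4m^2 .
\]
Adding the first two and twice the third gives
\[
\sum_{i=1}^{19}(x_i+y_i)^2+\sum\nolimits_s(z_s+z_s')^2=16+16m^2 .
\]

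\textbf{Step 2: sum the two trace identities.} By pseudo-unitarity (Lemma \ref{pseudounitary}), $\dim(A_i)=\FPdim(\cF(A_i))=c_i+(x_i+y_i)d$, where $c_i\in\{1,2\}$ is the integer part read off from the first four rows of $W$; similarly $\dim(Z_s)=(z_s+z_s')d$. When $\nu_2(g)=\nu_2(gh)=1$, Proposition \ref{prop:TwoThetasAB} (whose proof carries over verbatim) gives $\theta_i^2=1$ for all $1\le i\le 19$. Adding $\Tr(\theta^2_{\cI(X)})=0$ and $\Tr(\theta^2_{\cI(X^*)})=0$ then yields
\[
0=\sum_{i=1}^{19}c_i(x_i+y_i)+d\Bigl(\sum_{i=1}^{19}(x_i+y_i)^2+\sum\nolimits_s\theta_s^2(z_s+z_s')^2\Bigr).
\]

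\textbf{Step 3: evaluate the integer part.} The integer part $\sum_i c_i(x_i+y_i)$ equals $[\cI(\unit_\cC)\oplus\cI(g)\oplus\cI(h)\oplus\cI(gh),\ \cI(X)\oplus\cI(X^*)]$. Each summand $[\cI(u),\cI(X)\oplus\cI(X^*)]=[\cF\cI(u),X\oplus X^*]$ equals $4m$: the coefficient of $X$ and of $X^*$ in $\cF\cI(u)$ is $2m$ each, as in the computations of Proposition \ref{biao} with $m=n$. So the integer part is $16m$.

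\textbf{Step 4: conclude.} Substitute the identity from Step 1 into the equation from Step 2 and divide by $d$. This gives
\[
\frac{16m}{d}+16+16m^2-\sum\nolimits_s(z_s+z_s')^2+\sum\nolimits_s\theta_s^2(z_s+z_s')^2=0,
\]
which is \eqref{11}.

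The main obstacle is bookkeeping rather than any new idea. One must check carefully that the decomposition of $\cI$ and the twist table of Proposition \ref{prop:TwoThetasAB} transfer from the self-dual case. Of the hom-dimension computations, the only delicate one is $XX^*X$, where the rule $hX=X$ has to be tracked.
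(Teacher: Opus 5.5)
Your proposal is correct and follows essentially the same route as the paper. Both arguments add the Ng--Schauenburg identities $\Tr_{\cZ(\cC)}(\theta^2_{\cI(X)})=\Tr_{\cZ(\cC)}(\theta^2_{\cI(X^*)})=0$, use $\theta_i^2=1$ for $1\le i\le 19$, evaluate the constant term as $16m$, substitute the hom-space identity $\sum_{i}(x_i+y_i)^2+\sum_s(z_s+z_s')^2=16+16m^2$, and divide by $d$; the paper instead splits off $\sum_{i=1}^5(x_i+y_i)^2=\frac72m^2$ and quotes \eqref{mainequal} with $m=n$. One small slip: the $V=X^*$ summand of $\cF(\cI(X))$ is $X^*\otimes X\otimes X$, not $X^*XX^*$, but the values $8+4m^2$ and $4m^2$ you state are the correct ones.
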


\begin{proof}
It follows from  \cite[Theorem 4.1]{ngschauenburg} that the second Frobenius-Schur indicators  of  $X$ and $X^*$ are:
\begin{align*}
0=\nu_2(X)\dim(\cC)&=\Tr_{\mathcal{Z}(\cC)}(\theta_{\cI(X)}^2)
=x_1(1+md)+x_2\Bigl(1+\frac{m}{2}d\Bigr)+x_3\Bigl(1+\frac{m}{2}d\Bigr)+x_4(2+md)+x_5(2+md)\\
&\quad+\sum_{i=6}^9x_i\theta_{i}^2(1+(x_i+y_i)d) +\sum_{i=10}^{11}x_i\theta_{i}^2(2+(x_i+y_i)d) 
   +\sum_{i=12}^{15}x_i(1+(x_i+y_i)d)\\
  &\quad +\sum_{i=16}^{19}x_i\theta_{i}^2(1+(x_i+y_i)d)  + \sum\nolimits_s \theta_s^2 z_s (z_s+z_s')d, 
\end{align*}
\vspace{-0.5cm}
\begin{align*}
0=\nu_2(X^*)\dim(\cC)&=\Tr_{\mathcal{Z}(\cC)}(\theta_{\cI(X^*)}^2)
= y_1(1+md)+y_2\Bigl(1+\frac{m}{2}d\Bigr)+y_3\Bigl(1+\frac{m}{2}d\Bigr)+y_4(2+md)+y_5(2+md)\\
  &\quad+\sum_{i=6}^9y_i\theta_{i}^2(1+(x_i+y_i)d) +\sum_{i=10}^{11}y_i\theta_{i}^2(2+(x_i+y_i)d) 
   +\sum_{i=12}^{15}y_i(1+(x_i+y_i)d)\\
  &\quad +\sum_{i=16}^{19}y_i\theta_{i}^2(1+(x_i+y_i)d)  + \sum\nolimits_s \theta_s^2 z'_s (z_s+z_s')d.
\end{align*}
Adding the above equations together, using  Equations \eqref{eqn:HorizontalSumsAB4}, \eqref{eqn:HorizontalSumsAB3}, \eqref{eqn:VerticalSumsAB2}, \eqref{eqn:VerticalSumsAB3} and \eqref{eqn:VerticalSumsAB4}, and noticing that $\theta_i=\pm1$ for $6\leq i\leq 11$ and $16\leq i\leq 19$ when $\nu_2(g)=\nu_2(gh)= 1$. We have
\begin{equation*}
0= 16m+\frac{7}{2}m^2d
+\sum_{i=6}^{19}(x_i+y_i)^2d +\sum\nolimits_s \theta_s^2(z_s{+}z_s')^2d,
\end{equation*}
using Equation \eqref{mainequal} and dividing equation by $d$, we get the Equation \eqref{11}.
\end{proof}

\begin{thm}\label{thm3}
If $\nu_2(g)=\nu_2(gh)= 1$, then $R'(m)$ can not be categorified for all $m>0$.
\end{thm}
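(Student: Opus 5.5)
We argue from Equation \eqref{11}, just as Theorem \ref{thm1} argued from Equation \eqref{eqn:UsefulIndicatorAB2}. Since $d=m+\sqrt{2+m^2}$ we have $\frac{1}{d}=\frac{\sqrt{2+m^2}-m}{2}$, so $\frac{16m}{d}=8m\sqrt{2+m^2}-8m^2$. Moving $\sum_s(z_s+z_s')^2$ to the left-hand side, Equation \eqref{11} becomes
\begin{equation*}
\sum\nolimits_s(1-\theta_s^2)(z_s+z_s')^2=16+8m^2+8m\sqrt{2+m^2}.
\end{equation*}
Here $m\geq 2$ is even by Remark \ref{remark-m}, and $\sqrt{2+m^2}\notin\bbQ$ because $2+m^2$ lies strictly between $m^2$ and $(m+1)^2$. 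The left-hand side is a sum of $2\sum_s(z_s+z_s')^2$ roots of unity with integer coefficients (each $(z_s+z_s')^2$ counted twice, once for $1$ and once for $-\theta_s^2$), so it is bounded by the number of roots of unity needed to write the right-hand side.

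Next we need an upper bound on $\sum_s(z_s+z_s')^2$ in the non-self-dual setting. We redo Equation \eqref{mainequal} using the new dimensions: $x_1+y_1=x_4+y_4=x_5+y_5=m$ and $x_2+y_2=x_3+y_3=\frac{m}{2}$. We have $[\cI(X),\cI(X)]=[\cI(X^*),\cI(X^*)]=8+4m^2$ and $[\cI(X),\cI(X^*)]=4m^2$, since $X\otimes X^*\otimes X^{*}$ gives the relevant count. Summing as before gives
\[
\sum_{i=6}^{19}(x_i+y_i)^2+\sum\nolimits_s(z_s+z_s')^2=16+16m^2-3m^2-\tfrac{m^2}{2}.
\]
The Cauchy--Schwarz estimates of Inequality \eqref{eqn:SquaresUpperBoundAB}, with $m+n$ replaced by $2m$ and Lemma \ref{m+n}, then give $\sum_{i\geq 6}(x_i+y_i)^2\geq 5m^2$. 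This yields a bound of the form $\sum_s(z_s+z_s')^2\leq 16+cm^2$ with $c\leq \frac{15}{2}$, the analogue of $\frac{15}{8}(m+n)^2$.

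Finally we count roots of unity. Write $2+m^2=v^2t$ with $t$ square-free. Since $m$ is even, $2+m^2\equiv 2\pmod 4$, so $v$ is odd and $t$ is even. If $v=1$, then the coefficient $8m\sqrt{t}$ with $t\geq 3$ (indeed $t\equiv 2\pmod 4$ and $t>2$, so $t\geq 6$) requires at least $8m\,\varphi(2t)\geq 8m\sqrt{t}\cdot\frac{\varphi(2t)}{\sqrt t}$ roots of unity, by \cite[Theorem 5.7]{larson} together with \cite[Lemma 4.2]{schopieray2}. If $v>1$ the same count gives $8mv\varphi(2t)$. Either way we need
\[
32+2cm^2\geq 8m\sqrt{2+m^2}\,\frac{\varphi(2t)}{\sqrt t}.
\]
The case $t=2$ would need $m^2=2v^2-2$, and would be handled by \cite[Proposition 5.6]{larson}. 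Comparing leading terms, $8\cdot\frac{\varphi(2t)}{\sqrt t}\geq \frac{8\cdot 2}{\sqrt 6}>2c$ for the relevant $t$, which forces $m$ to be small. The remaining small even $m$ (for example $m=2,4$, where $2+m^2=6,18$) are then excluded by direct evaluation.

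The main obstacle is making the constant $c$ from the second step explicit and small enough, checking that it is beaten by $8\varphi(2t)/\sqrt t$ uniformly, and disposing of the few small $m$ and the $t=2$ subcase by hand. If the bound is not strong enough, I would sharpen it by using the exact values from Lemma \ref{m+n} rather than the Cauchy--Schwarz estimate.
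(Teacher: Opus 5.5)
Your setup matches the paper. The identity $\sum_s(1-\theta_s^2)(z_s+z_s')^2=16+8m^2+8m\sqrt{2+m^2}$ is correct. So are the values $[\cI(X),\cI(X)]=8+4m^2$ and $[\cI(X),\cI(X^*)]=4m^2$, and the bound $\sum_s(z_s+z_s')^2\le 16+\frac{15}{2}m^2$, with $c=\frac{15}{2}$ exactly.

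The gap is in the final counting step, and as written it is fatal. You keep only the irrational coefficient $8mv$ of the right-hand side, and you charge the left-hand side $2\sum_s(z_s+z_s')^2$ roots of unity. This leads to $32+15m^2\ge 8m\sqrt{2+m^2}\,\varphi(2t)/\sqrt t$. Your comparison $8\varphi(2t)/\sqrt t>2c=15$ is false for $t=6$: there $8\varphi(12)/\sqrt6=32/\sqrt6\approx 13.06<15$, and your $16/\sqrt6$ is smaller still.

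The equation $m^2-6v^2=-2$ has infinitely many even solutions $m=2,22,218,\dots$. On that whole family your inequality gives no contradiction. For instance, at $m=2$ it reads $92\ge 64$, so your ``direct evaluation'' does not dispose of $m=2$ either. Sharpening the Cauchy--Schwarz estimate will not rescue this. You would need $c<16/\sqrt6$, i.e.\ roughly $\sum_{i\ge 6}(x_i+y_i)^2\ge 6m^2$, and nothing available (Lemma \ref{m+n} included) gives that.

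The paper avoids both losses.

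First, it uses the rational part you discarded. Every root of unity has modulus $1$, so $16+8m^2+8m\sqrt{2+m^2}\le 2\sum_s(z_s+z_s')^2\le 32+15m^2$. A direct check shows this forces $m\le 2$, hence $m=2$, $v=1$, $t=6$.

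Second, for $m=2$ it applies \cite[Theorem 5.7]{larson} to Equation \eqref{delta0_11} \emph{before} moving $\sum_s(z_s+z_s')^2$ across. There the left-hand side $-\sum_s\theta_s^2(z_s+z_s')^2$ is a sum of only $\sum_s(z_s+z_s')^2$ roots of unity. The right-hand side still has irrational part $8mv\sqrt t$, and its integer part $16+8m^2-\sum_s(z_s+z_s')^2$ plays no role in Larson's bound. This gives $\sum_s(z_s+z_s')^2\ge 8mv\varphi(2t)=64>46$, the desired contradiction.

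Your rearrangement into $\sum_s(1-\theta_s^2)(z_s+z_s')^2$ is exactly what costs the factor $2$ that breaks your count.
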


\begin{proof}
By Equation \eqref{11} and $\frac{1}{d} = \frac{\sqrt{2+m^{2}} - m}{2}
$, we get
\begin{equation}
-\sum\nolimits_s\theta_s^2(z_s{+}z_s')^2=16+\frac{16m}{d}+16m^2-\sum\nolimits_s(z_s{+}z_s')^2=16+8m^2+8m\sqrt{2+m^2}-\sum\nolimits_s(z_s{+}z_s')^2,\label{delta0_11}
\end{equation}
which then implies
\begin{equation}
\sum\nolimits_s(1-\theta_s^2)(z_s{+}z_s')^2=16+8m^2+8m\sqrt{2+m^2}.\label{delta0_22}
\end{equation}
Notice that $2\sum_s(z_s{+}z_s')^2\geq | \sum_s(1-\theta_s^2)(z_s{+}z_s')^2|$. Using Equation \eqref{delta0_22} and Inequality \eqref{eqn:SquaresUpperBoundAB}, we get
\begin{equation*}
32 + 15m^2 \geq 16+8m^2+8m\sqrt{2+m^2},
\end{equation*}
which implies $m \leq 2$, so  $m=2$ by Remark \ref{remark-m}.

Assume $2 + m^2=v^2t$ where $v,t$ are integers with $v>0$ and $t$ is square free. 
Then by Theorem \cite[Theorem 5.7]{larson}, it requires at least 
$8mv\varphi(2t)$ roots of unity to write the right hand side of Equation \eqref{delta0_11}. 
Now by Inequality \eqref{eqn:SquaresUpperBoundAB}, we see
\begin{equation*}
16+ \frac{15}{2}m^2 
\geq 
\sum\nolimits_s (z_s+z_s')^2 
\geq 
8mv\varphi(2t)
= 
8m \sqrt{\frac{2 + m^2}{t}}\varphi(2t).
\end{equation*}
But $m=2$ does not satisfy this inequality. This completes the proof of the theorem.
 \end{proof}

\subsection{The case where $\nu_2(g)=\nu_2(gh)= -1$}

\begin{prop}\label{-1double}
If $\nu_2(g)=\nu_2(gh)= -1$, then we  have the following equality:
\begin{equation}
 -\sum\nolimits_s (\theta_s+\overline{\theta_s})(z_s+z'_s)^2 
  =  3m^2 + 2m\sqrt{8+4m^2}-2\sum_{i=12}^{15}(x_i+y_i)^2.\label{eqn:UsefulIndicatorAB1}
  \end{equation}\end{prop}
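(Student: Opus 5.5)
I will mirror the proof of Equation \eqref{eqn:UsefulIndicatorAB} from the self-dual case, now in the non-self-dual setting. The starting point is the first Frobenius--Schur indicators of $X$ and $X^*$, computed via \cite[Theorem 4.1]{ngschauenburg}. Since $\nu_1(X)=\nu_1(X^*)=0$, we get $0=\Tr_{\cZ(\cC)}(\theta_{\cI(X)})$ and $0=\Tr_{\cZ(\cC)}(\theta_{\cI(X^*)})$.

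Each trace is expanded using the matrix $W$ of the forgetful functor, with the same notation as Subsection \ref{subsection-decomposition}. The dimensions of $A_1,\dots,A_5$ are those given in the lemma above: $1+md$, $1+\frac m2 d$ (twice) and $2+md$ (twice). The remaining $A_i$ have $\dim(A_i)=1+(x_i+y_i)d$, or $2+(x_i+y_i)d$ for $i=10,11$, and $\dim(Z_s)=(z_s+z_s')d$. The twists are fixed as in Proposition \ref{prop:TwoThetasAB}: $\theta_i=1$ for $i\le5$, $\theta_i=-1$ for $12\le i\le15$, and, since $\nu_2(g)=\nu_2(gh)=-1$, $\theta_i=\pm \mathrm{i}$ for $6\le i\le 11$ and $16\le i\le 19$.

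I then add the two traces to their complex conjugates. The contributions from $A_6,\dots,A_{11}$ and $A_{16},\dots,A_{19}$ cancel, because $\theta_i+\overline{\theta_i}=0$ for those indices. What survives comes from three groups.

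\begin{itemize}
\item \emph{The terms from $A_1,\dots,A_5$.} Multiplied by $2$, these give the constant part $2\sum_{i\le 5}(x_i+y_i)\cdot(\text{integer part})$. Using \eqref{eqn:HorizontalSumsAB4} and \eqref{eqn:HorizontalSumsAB3} with $x_1+y_1=x_4+y_4=x_5+y_5=m$ and $x_2+y_2=x_3+y_3=\frac m2$, this equals $2(m+\frac m2+\frac m2+2m+2m)=12m$.
\item \emph{The $d$-parts from $A_1,\dots,A_5$.} These give $2d\bigl(m\cdot m+2\cdot\frac m2\cdot\frac m2+2m\cdot m\bigr)=2d\bigl(3m^2+\tfrac{m^2}{2}\bigr)=7m^2d$.
\item \emph{The terms from $A_{12},\dots,A_{15}$.} These contribute $-2\sum_{i=12}^{15}(x_i+y_i)-2d\sum_{i=12}^{15}(x_i+y_i)^2$. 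By \eqref{eqn:VerticalSumsAB3} the constant part equals $-2(4m-2m)=-4m$.
\end{itemize}

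The $Z_s$ contribute $\sum_s(\theta_s+\overline{\theta_s})(z_s+z_s')^2d$. Summing everything, I expect
\[0=8m+7m^2d-2d\sum_{i=12}^{15}(x_i+y_i)^2+d\sum\nolimits_s(\theta_s+\overline{\theta_s})(z_s+z_s')^2.\]
Dividing by $d$ and using $\frac1d=\frac{\sqrt{2+m^2}-m}{2}$ gives $\frac{8m}{d}=4m\sqrt{2+m^2}-4m^2=2m\sqrt{8+4m^2}-4m^2$. Rearranging then yields $3m^2+2m\sqrt{8+4m^2}-2\sum_{i=12}^{15}(x_i+y_i)^2$ on the right-hand side, which is \eqref{eqn:UsefulIndicatorAB1}.

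The main obstacle is bookkeeping: I must use exactly the right marginal sums in the non-self-dual setting, where the column sums come from $\cF(\cI(\unit_\cC))=6\cdot\unit_\cC\oplus2h\oplus2mX\oplus2mX^*$ and so are all $2m$. I also need to check that the constant terms really combine to $8m$, so that $\frac{8m}{d}$ produces precisely $-4m^2+2m\sqrt{8+4m^2}$. If the constants come out differently, I would recheck the integer parts $1$ versus $2$ attached to $A_4,A_5,A_{10},A_{11}$ against Lemma \ref{m+n}.
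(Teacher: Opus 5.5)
Your proposal is correct and follows the paper's proof essentially verbatim. You sum the expansions of $\Tr_{\cZ(\cC)}(\theta_{\cI(X)})$ and $\Tr_{\cZ(\cC)}(\theta_{\cI(X^*)})$ with their complex conjugates and kill the $\theta_i=\pm i$ terms, which gives the same identity $0=8m+7m^2d-2d\sum_{i=12}^{15}(x_i+y_i)^2+d\sum_s(\theta_s+\overline{\theta_s})(z_s+z_s')^2$. Your explicit checks that the constant is $8m$ and that $\frac{8m}{d}=2m\sqrt{8+4m^2}-4m^2$ supply arithmetic the paper leaves implicit.
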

\begin{proof}
We first consider  the  the first Frobenius-Schur indicators  of  simple objects $X$ and $X^*$ \cite{ngschauenburg}, we have
\begin{align*}
0=\nu_1(X)=&\Tr_{\mathcal{Z}(\cC)}(\theta_{\cI(X)})
= x_1(1+md)+x_2\Bigl(1+\frac{m}{2}d\Bigr)+x_3\Bigl(1+\frac{m}{2}d\Bigr)+ x_4(2+md)+x_5(2+md) \\
  &\quad+\sum_{i=6}^9x_i\theta_{i}(1+(x_i+y_i)d) +\sum_{i=10}^{11}x_i\theta_{i}(2+(x_i+y_i)d) 
   -\sum_{i=12}^{15}x_i(1+(x_i+y_i)d)\\
  &\quad +\sum_{i=16}^{19}x_i\theta_{i}(1+(x_i+y_i)d)  + \sum\nolimits_s \theta_s z_s (z_s+z_s')d, 
\end{align*}
\vspace{-0.5cm}
 \begin{align*}
0=\nu_1(X^*)=&\Tr_{\mathcal{Z}(\cC)}(\theta_{\cI(X^*)})
= y_1(1+md)+y_2\Bigl(1+\frac{m}{2}d\Bigr)+y_3\Bigl(1+\frac{m}{2}d\Bigr)+ y_4(2+md)+y_5(2+md) \\
  &\quad+\sum_{i=6}^9y_i\theta_{i}(1+(x_i+y_i)d) +\sum_{i=10}^{11}y_i\theta_{i}(2+(x_i+y_i)d) 
   -\sum_{i=12}^{15}y_i(1+(x_i+y_i)d)\\
  &\quad +\sum_{i=16}^{19}y_i\theta_{i}(1+(x_i+y_i)d)  + \sum\nolimits_s \theta_s z'_s (z_s+z_s')d.
\end{align*}
Notice that $\theta_i=\pm i$ for $6\leq i\leq 11$ and $16\leq i\leq 19$ if $\nu_2(g)=\nu_2(gh)= -1$, together with Equations \eqref{eqn:HorizontalSumsAB4}, \eqref{eqn:HorizontalSumsAB3} and \eqref{eqn:VerticalSumsAB3},
we obtain
\begin{align*}
0=&\Tr_{\cZ(\cC)}(\theta_{\cI(X)})+\Tr_{\cZ(\cC)}(\theta_{\cI(X^*)})+\overline{\Tr_{\cZ(\cC)}(\theta_{\cI(X)})+\Tr_{\cZ(\cC)}(\theta_{\cI(X^*)})}\\
= &8m+7m^2d-2\sum_{i=12}^{15}(x_i+y_i)^2d+\sum\nolimits_s (\theta_s+\overline{\theta_s})(z_s{+}z_s')^2d.
\end{align*}
Dividing both sides of the above equation by 
$d$ and rearranging, we obtain Equation \eqref{eqn:UsefulIndicatorAB1}.
\end{proof}

\begin{thm}\label{thm4}
If  $\nu_2(g)=\nu_2(gh)= -1$ and 
$m\neq 0$, then $m$ satisfies one of  the following two Pell equations
\begin{equation*}
8+4m^2=2v^2,  \quad  8+4m^2=6v^2,
\end{equation*}
where $v$ is a positive integer greater than $1$.
\end{thm}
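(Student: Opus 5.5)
The argument will follow the proof of Theorem \ref{thm2}, now starting from Equation \eqref{eqn:UsefulIndicatorAB1}. Write $8+4m^2=v^2t$ with $t$ square-free and $v>0$. Since $m$ is even by Remark \ref{remark-m}, the quantity $8+4m^2$ is divisible by $8$ but is not a perfect square, because $(s-2m)(s+2m)=8$ has no solution with $m\neq 0$. Hence $t>1$.

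The right-hand side of \eqref{eqn:UsefulIndicatorAB1} is $3m^2-2\sum_{i=12}^{15}(x_i+y_i)^2 + 2mv\sqrt t$. The rational part is an integer, and the irrational part has coefficient $2mv$. By \cite[Theorem 5.7]{larson}, writing this as a sum of roots of unity requires at least $2mv\varphi(2t)$ terms. The left-hand side is a sum of terms $-(\theta_s+\overline{\theta_s})(z_s+z_s')^2$, so it uses at most $2\sum_s(z_s+z_s')^2$ roots of unity. Inequality \eqref{eqn:SquaresUpperBoundAB} with $m+n$ replaced by $2m$ bounds this by $32+15m^2$. This gives
\[
32+15m^2\ \ge\ 2m\sqrt{\tfrac{8+4m^2}{t}}\,\varphi(2t),\qquad\text{that is,}\qquad \frac{\varphi(2t)}{\sqrt t}\le \frac{32/m^2+15}{2\sqrt{2/m^2+4}}.
\]
The right-hand side decreases in $m$. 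For $m\ge 2$ it is at most $\frac{23}{2\sqrt{4.5}}\approx 5.42$, and it tends to $3.75$.

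Next I find the candidate values of $t$. A variant of Lemma \ref{ineq} (prime divisors $p$ with $(p-1)/\sqrt p$ above the bound, so $p\ge 37$ or so for small $m$) limits the primes dividing $t$ to a small set, and a direct check then lists the finitely many square-free $t$ satisfying the bound. Most of these are removed by congruences. Put $m=2u$, so that $8+16u^2=v^2t$. If $t$ is odd, then $8\mid v^2$, so $4\mid v$. Writing $v=4w$ gives $1+2u^2=2tw^2$, which is impossible by parity. Hence $t$ is even. Write $t=2t'$ with $t'$ odd. Then $4+8u^2=v^2t'$ forces $v=2w$, and $1+2u^2=t'w^2$. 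For every odd prime $p\mid t'$ this means $-2$ is a square modulo $p$, so $p\equiv 1,3\pmod 8$. This already excludes $5$, $7$, $13$ and similar primes.

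The remaining even candidates should be $t\in\{2,6,22,34,66,\dots\}$. To dispose of $t=22,34,66$, I will use the sharper bound for the specific $m$ at hand together with a congruence. For example, $t'\equiv 1\pmod 8$ forces $w$ odd and $1+2u^2\equiv 1\pmod 8$, so $u\equiv 0\pmod 2$. If congruences alone do not suffice, I will use $\varphi(2t)/\sqrt t$ directly: for $t=22$ it is $10/\sqrt{22}\approx 2.13$, which survives the bound, so a congruence is needed. Modulo $11$, the condition that $-2$ is a square holds, so I would instead look for solvability obstructions of $1+2u^2=11w^2$ modulo $8$ or modulo $3$.

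I expect this elimination of $t\in\{22,34,66\}$ to be the main obstacle. If no elementary obstruction exists, I would fall back on the quantitative bound. Solutions of these Pell-type equations grow quickly, so for large $m$ the bound tends to $3.75$. I would then compare $3.75$ with the ratios $\varphi(2t)/\sqrt t$ for these $t$, and check the few small solutions $m$ individually. The conclusion is then $t\in\{2,6\}$, which is exactly the two Pell equations in the statement, with $v>1$ because $m\neq 0$.
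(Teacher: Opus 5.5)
Your route is the paper's: apply Larson's bound to the right side of \eqref{eqn:UsefulIndicatorAB1}, compare with $2\sum_s(z_s+z_s')^2\le 32+15m^2$, then prune $t$ by congruences. Your congruence step is correct. It gives $t=2t'$ with every odd prime of $t'$ congruent to $1$ or $3 \pmod 8$, which is a bit sharper than the paper's mod $5$ and mod $7$ checks.

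The gap is the step you leave open, namely eliminating $t=22,34,66$, and the plan you propose for it cannot work.
\begin{enumerate}
\item At $t=22$ you need $\varphi(2t)=\varphi(44)=20$, not $10$. So $\varphi(44)/\sqrt{22}\approx 4.26$, not $2.13$.
\item There is no congruence obstruction to find, because these values of $t$ actually occur for admissible even $m$. The value $m=14$ gives $8+4m^2=792=6^2\cdot 22$, i.e.\ $1+2u^2=11w^2$ with $u=7$, $w=3$. The value $m=8$ gives $8+4m^2=264=2^2\cdot 66$, i.e.\ $1+2u^2=33w^2$ with $u=4$, $w=1$.
\item Only the quantitative bound can exclude them. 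With your value $2.13$, your fallback comparison against $3.75$ would not exclude $t=22$ either.
\end{enumerate}

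The missing ingredient is the paper's split by the size of $m$, together with the correct values of $\varphi(2t)/\sqrt t$.
\begin{itemize}
\item For $m=2$ and $m=4$, compute $t$ directly: $24=2^2\cdot 6$ and $72=6^2\cdot 2$.
\item For $m\ge 6$, i.e.\ $q=2m\ge 12$, the bound satisfies $\frac{32+15m^2}{q\sqrt{8+q^2}}<\frac{32}{q^2}+\frac{15}{4}\le\frac{143}{36}<4$.
\item Lemma \ref{ineq} then restricts the primes dividing $t$ to at most $17$. A finite check lists the square-free $t$ with $\varphi(2t)/\sqrt t<4$.
\item The values $22,34,66$ are not on that list, since their ratios are about $4.26$, $5.49$ and $4.92$. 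Intersecting the list with your congruence condition leaves $t\in\{2,6\}$.
\end{itemize}

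There is also a minor slip in your displayed bound. Dividing by $m^2$ gives $2\sqrt{8/m^2+4}$ in the denominator, not $2\sqrt{2/m^2+4}$. The correct bound is $\approx 4.69$ at $m=2$ and $\approx 4.01$ at $m=4$. Your version only weakens the inequality, so it is harmless, but it is another reason to handle small $m$ separately.
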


\begin{proof}
Let $8 + 4m^2=v^2t$, where $t,v$ are integers with $v>1$ and $t$ is square free.
Then it requires at least 
$2mv\varphi(2t)$ roots of unity to write the right hand side of Equation \eqref{-1double} by Theorem \cite[Theorem 5.7]{larson}. 
Now by Inequality \eqref{eqn:SquaresUpperBoundAB}, we have
\begin{equation*}
32+ 15m^2
\geq 
2\sum\nolimits_s (z_s+z_s')^2 
\geq 
2mv\varphi(2t)
= 
2m \sqrt{\frac{8 + 4m^2}{t}}\varphi(2t).
\end{equation*}
Let \(q:=2m\), then
$
q^2+8=v^2t
$.
If \(q=4\), then \(t=6\); if \(q=8\), then \(t=2\). Hence, in both cases,
$
t\in\{2,6\}$, it suffices to consider the case \(q\ge12\). As $\frac{32}{q^2}\le\frac{2}{9}$ and $\sqrt{1+\frac8{q^2}}>1$,
  we have 
\begin{equation*}
\frac{\varphi(2t)}{\sqrt t}
\leq \frac{32+15m^2}{2m\sqrt{8+4m^2}}<\frac{32+\frac{15q^2}{4}}{q^2}\leq
\frac{143}{36}
<4.
\end{equation*}
By using the same argument as Theorem \ref{thm2},  we know that prime divisors of $t$ belong  to $\{2,3,5,7,11,13,17\}$ and  that the set of square-free integers satisfying $\frac{\varphi(2t)}{\sqrt t}
<4 $ is 
\begin{equation}
\{1,2,3,5,6,7,10,11,13,14,15,17,21,30,33,39,42\}.
\label{s2}    
\end{equation}
Since  \(4\mid q\), write $q=4u$, and 
\begin{equation}
16u^2-tv^2=-8.
\label{s4}    
\end{equation}
If $t$ is odd, then reducing Equation \eqref{s4} modulo \(8\), we obtain
$-tv^2\equiv0\pmod8$, that is, $v^2\equiv0\pmod8$ as $t$ is odd,  so $v$ is even. Let  $v=2w$, then
$4u^2-tw^2=-2$, reducing it modulo \(4\), we get
$-tw^2\equiv2\pmod4$. However,   quadratic residues modulo \(4\) are \(0,1\), the left-hand side can only be congruent to \(0,1,\) or \(3\pmod4\), it is a contradiction. Hence \(t\) is even.
Thus, $t\in\{2,6,10,14,30,42\}$.

If \(5\mid t\), then  
$q^2\equiv-8\equiv2\pmod5$, 
it is impossible as \(2\) is not a quadratic residue modulo \(5\); a similar argument shows $7\nmid t$.  Therefore
$t\in\{2,6\}$.
\end{proof}

\begin{rem}\label{pell2}
Each of the two Pell equations in Theorem \ref{thm4} has infinitely many solutions.
The set of all solutions with 
$m\leq 1000$ is $\{2,4,22,24,140,218,816\}.$ 
\end{rem}

\begin{thm}\label{thm34}
\label{thm:PseudounitaryBoundnonselfdual}
Let $\cC$ be a  fusion category with the fusion rules \ref{notselfdual}.
Then 
$(m,m)\in\{(0,0),(2,2),(4,4)\}$ if $m<22$.
\end{thm}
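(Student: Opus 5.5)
The plan is to assemble the earlier case analysis, exactly as in the proof of Theorem \ref{thm:PseudounitaryBoundselfdual}. First, if $m=0$ there is nothing to show, since $(0,0)$ is on the list; by Lemma \ref{R(0)} this ring is in fact categorifiable. So assume $m\neq0$. Remark \ref{remark-m} then gives $2\mid m$.

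Next, by the analogue of Proposition \ref{prop:TwoThetasAB} in the non-self-dual setting, we are in one of two cases: $\nu_2(g)=\nu_2(gh)=1$ or $\nu_2(g)=\nu_2(gh)=-1$. This analogue holds because the decomposition of $\cI$ and the twist analysis carry over verbatim. In the first case, Theorem \ref{thm3} says $R'(m)$ admits no categorification for $m>0$, so this case contributes nothing.

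In the second case, Theorem \ref{thm4} forces $m$ to solve $8+4m^2=2v^2$ or $8+4m^2=6v^2$ with $v>1$. It remains to list the solutions with $m<22$.
\begin{itemize}
\item The first equation is equivalent to $v^2-2m^2=4$. Writing $v=2w$ and $m=2k$ turns it into the Pell equation $w^2-2k^2=1$.
\item The second equation is equivalent to $3v^2-2m^2=4$, whose small solutions can be read off directly.
\end{itemize}
Rather than grinding through the recurrences, I would invoke Remark \ref{pell2}: every solution with $m\le1000$ lies in $\{2,4,22,24,\dots\}$. Hence $m<22$ leaves only $m=2$ (from $t=6$, since $8+16=24=6\cdot 4$) and $m=4$ (from $t=2$, since $8+64=72=2\cdot36$). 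These are exactly the pairs $(2,2)$ and $(4,4)$.

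The only point needing care is justifying that the two cases $\nu_2(g)=\nu_2(gh)=\pm1$ are exhaustive in the non-self-dual setting. I would argue this as in Proposition \ref{prop:TwoThetasAB}. Since $g^2=\unit_\cC$, all the $\theta_j^2$ for $6\le j\le 11$ coincide. The $\nu_2(g)$ computation then forces $\theta_j^2=\pm1$. Finally, the same reasoning applied to $gh$, together with the shared summands $A_{10},A_{11}$, forces $\nu_2(gh)=\nu_2(g)$.

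Beyond that, the proof is a short combination of Theorems \ref{thm3} and \ref{thm4} with Remark \ref{pell2}, plus the small enumeration of Pell solutions.
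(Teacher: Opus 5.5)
Your proposal is correct and follows the paper's proof essentially verbatim. You dispose of $m=0$, rule out the case $\nu_2(g)=\nu_2(gh)=1$ by Theorem \ref{thm3}, and in the case $\nu_2(g)=\nu_2(gh)=-1$ combine Theorem \ref{thm4} with the enumeration in Remark \ref{pell2}. Your extra justification that these two $\nu_2$-cases are exhaustive is just the content of Proposition \ref{prop:TwoThetasAB} and the remark following it, which the paper uses implicitly in the non-self-dual setting.
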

\begin{proof}
If $m=0$, $R'(0)$ can be categorified by Lemma \ref{R(0)}. Assume $m>0$ below. If $\nu_2(g)=\nu_2(gh)= 1$, then $(m,m)$ is empty
 by Theorem \ref{thm3}; if $\nu_2(g)=\nu_2(gh)= -1$ and $m<22$, then $(m,m)$ must be one of $(2,2),(4,4)$ by Theorem \ref{thm4} and Remark \ref{pell2}.
\end{proof}

\section{Categorifications of fusion rings}\label{section-categorification}
In this section, we give categorification of fusion rings $R(m,n)$ and $R'(m)$. We begin with the self-dual case.

Let $\mathcal{C}$ be a modular fusion category. Assume that $\mathcal{C}$ contains  a Tannakian fusion subcategory $\Rep(G)$, then there is another modular fusion category, denoted by $\cC_G^0$, obtained from the de-equivariantization of centralizer of $\Rep(G)$ in $\mathcal{C}$ by $\Rep(G)$. The dimension of  $\mathcal{C}_G^0$ is given by $\dim(\mathcal{C}_G^0)=\frac{\dim(\mathcal{C})}{|G|^2}$ \cite{drinfeld2010braided}. 
In addition, let $A=\Fun(G)\in \Rep(G)$ be the regular algebra of $\Rep(G)$, then it is a connected \'{e}tale algebra in sense of \cite{dmno,kirillovostrik}, and   the category of local  modules $\mathcal{C}_A^0$ over  the \'{e}tale algebra $A$  in $\mathcal{C}$ is exactly the modular fusion category $\mathcal{C}_G^0$.

Let $k$ be a positive integer. Let $\cC(\mathfrak{sl}_4,k)$ be the modular fusion category obtained from semisimplification of the category $\Rep(U_q(\mathfrak{sl}_4))$ of  finite-dimensional representations of $U_q(\mathfrak{sl}_4)$ at root of unity $q=e^\frac{2\pi i}{4(k+4)}$, we refer the readers to \cite{bakalovkirillov,egno2015} for a detailed construction of modular fusion categories $\cC(\mathfrak{g},k)$, where $\mathfrak{g}$ is a semisimple Lie algebra over $\bbK$. The  modular fusion category $\cC(\mathfrak{sl}_4,8)$ contains a Tannakian subcategory $\Rep(\bbZ/4\bbZ)$, and there is a braided tensor equivalence  
\begin{align*}\cZ(\cC_{4,8,4}) \cong \cC(\mathbb Z/2\mathbb Z\times \mathbb Z/2\mathbb Z ,\eta)\boxtimes \cC(\mathfrak{sl}_4,8)^0_{\mathbb Z/4\mathbb Z},
\end{align*}
where $\cC_{4,8,4}$ categorifies the fusion ring $R(4,6)$ or $R(6,4)$ by \cite[Theorem 5.2]{edie-michell}   and $\cC(\mathbb Z/2\mathbb Z\times \mathbb Z/2\mathbb Z ,\eta)$ is a modular   fusion category determined   by the metric group $\cC(\mathbb Z/2\mathbb Z\times \mathbb Z/2\mathbb Z ,\eta)$ \cite[Appendix A]{drinfeld2010braided}.  

Combining the above arguments with Proposition \ref{notzero}, we have the following:
\begin{prop}\label{selfdual-categorification}
    For any $(m,n)\in\{(0,0),(0,1),(1,0),(0,2),(2,0),(4,6),(6,4)\}$, then there exists a fusion category $\cC$ such that $\cK_0(\cC)=R(m,n)$.
\end{prop}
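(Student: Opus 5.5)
The proposition is proved by exhibiting an explicit categorification for each listed pair. We split into the pairs with a zero entry and the pairs $(4,6),(6,4)$.

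\textbf{Pairs with a zero entry.} For $(m,n)$ with $m=0$ or $n=0$, take $\cC=\mathrm{Vec}_{\bbZ/2\bbZ}\boxtimes\cA$, where $\cA$ is a near-group fusion category of type $\bbZ/2\bbZ+l$ with $l\in\{0,1,2\}$. Such $\cA$ exist: for $l=0$ take the Ising category (Tambara--Yamagami for $\bbZ/2\bbZ$); for $l=1$ take $\Rep(S_3)$; for $l=2$ take the even part of $\cC(\mathfrak{sl}_2,4)$, equivalently $\cC(\mathfrak{so}_3,\cdot)$ with $X^2=\unit\oplus h\oplus 2X$. Write the generator of $\mathrm{Vec}_{\bbZ/2\bbZ}$ as $g$, and let $h$ and $Z$ be the invertible and non-invertible simples of $\cA$. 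Then:
\begin{itemize}
\item the invertible objects are $\{\unit,g,h,gh\}\cong\bbZ/2\bbZ\times\bbZ/2\bbZ$;
\item the non-invertible simples are $X:=\unit\boxtimes Z$ and $Y:=g\boxtimes Z=g\otimes X$;
\item $X^2=\unit\oplus h\oplus lX$ and $Y^2=g^2\boxtimes Z^2=X^2$.
\end{itemize}
This is exactly $R(l,0)$. Choosing instead $X:=g\boxtimes Z$ swaps the roles of $X$ and $Y$, since now $X^2=\unit\oplus h\oplus lY$; this gives $R(0,l)$. Together these cover all five pairs with a zero entry, and this is the content of Proposition~\ref{notzero}.

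\textbf{The pairs $(4,6)$ and $(6,4)$.} Invoke \cite[Theorem 5.2]{edie-michell}, which constructs $\cC_{4,8,4}$ with center $\cC(\bbZ/2\bbZ\times\bbZ/2\bbZ,\eta)\boxtimes\cC(\mathfrak{sl}_4,8)^0_{\bbZ/4\bbZ}$ and Grothendieck ring $R(4,6)$ or $R(6,4)$. The two rings are isomorphic via $X\leftrightarrow Y$. Indeed, with $X'=Y$ and $Y'=X$ we have $X'^2=Y^2=\unit\oplus h\oplus mX\oplus nY=\unit\oplus h\oplus nX'\oplus mY'$. So the same category categorifies both labelings, just as in the zero case.

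\textbf{Main obstacle.} The only nontrivial point is confirming that Edie-Michell's category has exactly the stated fusion rules, in particular that the coefficients are $4$ and $6$ rather than some other pair. I would check this against the formal codegrees: the global dimension $8+100+10\sqrt{108}$ from Lemma~\ref{pseudounitary} should match $\dim\cC(\mathfrak{sl}_4,8)^0_{\bbZ/4\bbZ}\cdot 4/\dim\cC$ consistency, that is, $\dim\cZ(\cC)=\dim(\cC)^2$. Beyond that, I would simply cite the reference.
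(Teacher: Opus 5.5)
Your strategy is the same as the paper's. For the pairs with a zero entry you use $\mathrm{Vec}_{\bbZ/2\bbZ}\boxtimes\cA$ with $\cA$ near-group of type $\bbZ/2\bbZ+l$. For $(4,6)$ and $(6,4)$ you cite \cite[Theorem 5.2]{edie-michell}, together with $R(m,n)\cong R(n,m)$ via $X\leftrightarrow Y$. Your fusion-rule computation for $\mathrm{Vec}_{\bbZ/2\bbZ}\boxtimes\cA$ and the relabeling argument are both correct.

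The concrete error is your witness for $l=2$. The even part of $\cC(\mathfrak{sl}_2,4)$ has simples $V_0,V_2,V_4$ of dimensions $1,2,1$ with $V_2\otimes V_2=V_0\oplus V_2\oplus V_4$. That is the $l=1$ ring again (the fusion ring of $\Rep(S_3)$), not $l=2$.

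Dimensions alone show this. For $l=2$ the non-invertible simple satisfies $d^2=2+2d$, so $d=1+\sqrt{3}$ is irrational, whereas $\cC(\mathfrak{sl}_2,4)_{\mathrm{ad}}$ is integral. Your alternative ``$\cC(\mathfrak{so}_3,\cdot)$'' does not produce $l=2$ either. As written, the pairs $(0,2)$ and $(2,0)$ are therefore not covered.

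Replace the witness by the rank-three category usually called $\tfrac12 E_6$: the even part of the $E_6$ subfactor. Equivalently, it is the category of modules over the conformal-embedding algebra $V_0\oplus V_6$ inside the adjoint subcategory of $\cC(\mathfrak{sl}_2,10)$. It is a near-group category of type $\bbZ/2\bbZ+2$, and its existence is known (see e.g.\ Ostrik's rank-three classification \cite{ostrik2015pivotal}).

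A minor point: your sanity check $\dim\cZ(\cC)=\dim(\cC)^2$ for $\cC_{4,8,4}$ only detects $m+n=10$, because $\dim(\cC)$ depends only on $m+n$. It does not by itself single out $\{4,6\}$. Once the ring is known to be some $R(m,n)$, Remark~\ref{rem:K2abBetaConditions} does force $\{m,n\}=\{4,6\}$. Either way, as in the paper, it is the citation that actually carries this case.
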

Next we consider the categorifications of fusion rings $R'(m)$. 
\begin{lem}\label{R(0)}
There exists a fusion category $\cB$ such that $\cK_0(\cB)=R'(0)$. 
\end{lem}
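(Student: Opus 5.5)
We plan to realize $R'(0)$ explicitly as a fusion subcategory of a Deligne product, rather than through an abstract extension-theoretic argument. Setting $m=0$ in \ref{notselfdual}, the ring $R'(0)$ has basis $\unit,g,h,gh,X,X^*$ with
\begin{equation*}
g\otimes X=X^*,\qquad X\otimes X^*=\unit\oplus h,\qquad X\otimes X=g\otimes(X\otimes X^*)=g\oplus gh,
\end{equation*}
and $\FPdim(X)=\sqrt2$. These are Ising-type relations in which the role of the self-duality of $\sigma$ is "shifted" by a group element. This suggests tensoring the Ising category with a cyclic group of order $4$, so that squaring the odd generator lands in a nontrivial graded piece.

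\textbf{Construction.} Let $\mathcal{I}$ be an Ising category, with simple objects $\unit,\psi,\sigma$ and fusion rules $\psi^2=\unit$, $\psi\sigma=\sigma$, $\sigma^2=\unit\oplus\psi$. Consider $\mathcal{I}\boxtimes \mathrm{Vec}_{\bbZ/4\bbZ}$, and write $a^k$ ($k\in\bbZ/4\bbZ$) for the simple objects of $\mathrm{Vec}_{\bbZ/4\bbZ}$. Define $\cB$ to be the fusion subcategory generated by $X:=\sigma\boxtimes a$. Its simple objects are
\begin{equation*}
\unit\boxtimes a^0,\quad \psi\boxtimes a^0,\quad \unit\boxtimes a^2,\quad \psi\boxtimes a^2,\quad \sigma\boxtimes a,\quad \sigma\boxtimes a^3 .
\end{equation*}
To confirm this list, we check that this set of six objects is closed under tensor products and duals. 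This is immediate from the Ising fusion rules together with the parity grading: $\sigma$ pairs only with odd powers of $a$, and $\unit,\psi$ only with even powers.

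\textbf{Matching the fusion rules.} We put
\begin{equation*}
g:=\unit\boxtimes a^2,\qquad h:=\psi\boxtimes a^0,\qquad gh=\psi\boxtimes a^2 .
\end{equation*}
These three objects together with the unit form a group $\bbZ/2\bbZ\times\bbZ/2\bbZ$, since $\psi^2=\unit$ and $(a^2)^2=a^0$. Because $\sigma$ is self-dual in $\mathcal{I}$, we have $X^*=\sigma\boxtimes a^3=g\otimes X$. The remaining relations are direct computations:
\begin{equation*}
X\otimes X^*=(\unit\oplus\psi)\boxtimes a^0=\unit\oplus h,\qquad X\otimes X=(\unit\oplus\psi)\boxtimes a^2=g\oplus gh .
\end{equation*}
Moreover $X\not\cong X^*$, since $a\neq a^3$. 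Hence $\cK_0(\cB)=R'(0)$.

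For comparison with the statement of the main theorem, we also note two structural facts. The $\bbZ/4\bbZ$-degree modulo $2$ gives a faithful $\bbZ/2\bbZ$-grading on $\cB$. Its trivial component is pointed with group $\bbZ/2\bbZ\times\bbZ/2\bbZ$, so it is of the form $\mathrm{Vec}^{\omega}_{\bbZ/2\bbZ\times\bbZ/2\bbZ}$ for the restricted associator $\omega$. Thus $\cB$ is the faithful $\bbZ/2\bbZ$-extension described there.

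\textbf{Expected difficulty.} The only real obstacle is finding the construction; once it is in hand, every verification is a routine check of fusion rules. If one instead wanted to identify the associator $\omega$, or to classify all categorifications of $R'(0)$, that would require extension theory (Etingof–Nikshych–Ostrik) and is beyond what the lemma asks.
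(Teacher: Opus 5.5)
Your proof is correct, and it takes a genuinely different route from the paper's. You realize $R'(0)$ concretely as the fusion subcategory of $\mathcal{I}\boxtimes\mathrm{Vec}_{\bbZ/4\bbZ}$ generated by $\sigma\boxtimes a$. Because the associativity constraint is inherited from the ambient category, the only things to check are these:
\begin{itemize}
\item the six listed simple objects are closed under $\otimes$ and duals;
\item the relations $g\otimes X=X^*$, $X\otimes X^*=\unit\oplus h$ and $X\otimes X=g\oplus gh$ hold;
\item $X\not\cong X^*$.
\end{itemize}
Each of these is immediate from the Ising fusion rules and the $\bbZ/4\bbZ$-degree.

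The paper argues structurally instead. It shows that any categorification is weakly integral and contains a pointed subcategory of dimension $4$, so it is a faithful $\bbZ/2\bbZ$-extension of $\mathrm{Vec}^{\omega}_{\bbZ/2\bbZ\times\bbZ/2\bbZ}$. It then obtains existence by citing Vainerman--Vallin, both for a suitable invertible bimodule category and for the vanishing of the extension obstructions.

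That route places $R'(0)$ inside extension theory, which is the natural framework if you want to parametrize all categorifications (varying $\omega$, the invertible bimodule category, and the remaining extension data). As written, however, it leaves implicit that the extension it produces has exactly two non-self-dual odd simple objects of dimension $\sqrt{2}$, exchanged by $g$. Other invertible bimodule categories over $\mathrm{Vec}^{\omega}_{\bbZ/2\bbZ\times\bbZ/2\bbZ}$ would give a different odd component, for example a single simple object of dimension $2$ or four invertible ones.

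Your explicit model avoids both the heavy citations and this matching issue. It still recovers the description ``faithful $\bbZ/2\bbZ$-extension of $\mathrm{Vec}^{\omega}_{\bbZ/2\bbZ\times\bbZ/2\bbZ}$'' via the parity of the $\bbZ/4\bbZ$-degree. It also yields further categorifications with no extra work, by replacing $\mathrm{Vec}_{\bbZ/4\bbZ}$ with a twisted $\mathrm{Vec}^{\alpha}_{\bbZ/4\bbZ}$ or by using a different Ising category.
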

\begin{proof}
Assume that there is a fusion category $\cB$ such that $\cK_0(\cB)=R'(0)$. Notice that  $\cB$ is weakly integral and it contains a pointed fusion subcategory of dimension $4$, so $\cB=\oplus_{s\in\bbZ/2\bbZ}\cB_s$ is a faithful $\bbZ/2\bbZ$-extension of  $\cB_0=\mathrm{Vec}_{\bbZ/2\bbZ\times \bbZ/2\bbZ}^{\,\omega}$ by \cite[Proposition 3.5.3]{egno2015}, and $\cB_1$ is an invertible bimodule category of $\cB_0$, the existence of  such an invertible  bimodule category is guaranteed by 
\cite[Proposition 3.7(1)]{vainermanvallin}. Then it follows from \cite[Theorem 3.16]{vainermanvallin} that the fusion category $\cB$ exists as the extension obstructions vanish. 
\end{proof}
In the rest of this section,  we show that the fusion ring $R'(2)$ can be categoried by fusion category related to  near-group category. Let $\cC$  the near-group fusion category of type $\bbZ/2\bbZ \times \bbZ/4\bbZ+8$ discussed in \cite[section 3.2]{rowellsolomonzhang}. It follows from \cite[Theorem 3.2]{rowellsolomonzhang} that $\cZ(\cC)$ contains a Tannakian fusion category $\Rep(\bbZ/2\bbZ)$ and  $\cZ(\cC)_{\mathbb Z/2\mathbb Z}^0\cong \cD  \boxtimes \cC(\mathbb Z/2\mathbb Z,q)$ as braided fusion categories, where $\cD$ is a spin modular category of rank $18$, i.e., $\cD$ contains a super-modular fusion category $\cD_0$ of rank $10$,  $\cC(\bbZ/2\bbZ,q)$ is a pointed modular fusion category of dimension $2$ and $q$ is a non-degenerate quadratic of $\mathbb Z/2\mathbb Z$. 
Note $\dim(\cZ(\cC)_{\mathbb Z/2\mathbb Z}^0)=\frac{\dim(\cZ(\cC))}{4}=960+384\sqrt{6}$ and $\dim(\cD)=480+192\sqrt{6}=48(\chi_6^2)^2$, where $\chi_m^n:=n+\sqrt{m}$ following the notation in \cite{rowellsolomonzhang}.
\begin{lem}
 Let $\cC$ be a near-group fusion category of type $\bbZ/2\bbZ\times\bbZ/4\bbZ+8$. Then $\cC\cong\cA^{\bbZ/2\bbZ}$, where $\cA$ is fusion category of rank $6$ such that $G(\cA)=\bbZ/2\bbZ\times\bbZ/2\bbZ$; moreover, $\cK_0(\cA)$ is commutative.
\end{lem}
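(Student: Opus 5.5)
We exhibit $\cC$ as an equivariantization by finding a $\Rep(\bbZ/2\bbZ)$ subcategory whose regular algebra can be de-equivariantized. The near-group category $\cC$ of type $G+8$ with $G=\bbZ/2\bbZ\times\bbZ/4\bbZ$ has $9$ simple objects $g\in G$ and $X$, with $gX=X=Xg$ and $X^2=\sum_{g\in G}g+8X$. The first step is to pick the order-two element $z=(0,2)\in G$, the unique element of order $2$ which is a square, and show that $\cC$ contains a central copy of $\Rep(\bbZ/2\bbZ)$ generated by $z$. Concretely, we want a braided embedding $\Rep(\bbZ/2\bbZ)\to\cZ(\cC)$ whose composite with the forgetful functor is the pointed subcategory $\langle z\rangle$. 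For this we use \cite[Theorem 3.2]{rowellsolomonzhang}, recalled above, which gives a Tannakian subcategory $\Rep(\bbZ/2\bbZ)\subset\cZ(\cC)$. We then check via the bi-adjoint pair $(\cF,\cI)$ that the nontrivial invertible object $\delta$ of this subcategory satisfies $\cF(\delta)=z$. The alternative $\cF(\delta)=\unit$ is ruled out because $[\cI(\unit),\delta]=0$: the simple summands of $\cI(\unit)$ have dimensions given by the formal codegrees, and none of these is $1$ apart from $\unit$ itself. The possibility that $\cF(\delta)$ is another order-two element $w\neq z$ is handled by comparing twists, or by applying a group automorphism.

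Once $\cF(\delta)=z$ with $\delta$ a boson lifting $z$, standard theory gives the following \cite[\S4]{drinfeld2010braided}. The algebra $A=\unit\oplus z$ is a connected separable commutative algebra in $\cZ(\cC)$. The category $\cA:=\cC_A$ of right $A$-modules in $\cC$ is a fusion category carrying a $\bbZ/2\bbZ$-action, and $\cC\cong\cA^{\bbZ/2\bbZ}$. It remains to compute the simple objects of $\cA$ through the free module functor $V\mapsto V\otimes A$, using $[V\otimes A,W\otimes A]_A=[V,W\otimes A]$.

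For $g\in G$ we have $g\otimes A=g\oplus gz$, and this module is simple. Free modules on $g$ and $g'$ coincide exactly when $g'\in g\langle z\rangle$. This produces $4$ invertible objects, indexed by $G/\langle z\rangle\cong\bbZ/2\bbZ\times\bbZ/2\bbZ$. Since $zX=X$, we get
\[
[X\otimes A,X\otimes A]_A=[X,X\oplus X]=2,
\]
so $X\otimes A$ splits into two non-isomorphic simple modules $X_1,X_2$. The dimension count
\[
4+2\FPdim(X_1)^2=\tfrac{1}{2}\FPdim(\cC)
\]
confirms that there is nothing else, so $\cA$ has rank $6$ and $G(\cA)\cong\bbZ/2\bbZ\times\bbZ/2\bbZ$. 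The invertibles are distinct from $X_1,X_2$ since $\FPdim(X_i)=\FPdim(X)>1$.

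Commutativity of $\cK_0(\cA)$ then follows formally. The group $G(\cA)$ is abelian, and it acts on $\{X_1,X_2\}$ by left and by right multiplication. The products $X_iX_j$ are determined by $X\otimes X$, and $\cA$ is $\bbZ/2\bbZ\times\bbZ/2\bbZ$-quadratic of rank $6$, so the remark in Section~\ref{section-prels} applies: the fusion rules of such a category are automatically commutative.

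The main obstacle is the first step, namely that the central $\Rep(\bbZ/2\bbZ)$ provided by \cite{rowellsolomonzhang} restricts to $z$ with trivial twist and compatible half-braiding. The object $z$ is the natural candidate because it is a square in $G$, so the associated quadratic form on the near-group data is trivial on it. If the reference does not specify the restriction directly, we would instead compute $\cI(z)$ and locate a summand of dimension $1$ with twist $1$.
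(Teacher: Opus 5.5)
\textbf{The gap: which order-two element the boson lifts.} Your free-module analysis of $\cC_A$ is sound, and it is more explicit than the paper's argument. But it shows that the whole claim $G(\cA)\cong\bbZ/2\bbZ\times\bbZ/2\bbZ$ rests on the one step you leave open. Put $w:=\cF(\delta)$. You correctly rule out $w=\unit$, since the summands of $\cI(\unit_\cC)$ have dimensions $1$, $5+2\sqrt6$ and $6+2\sqrt6$. Hence $w\in\{(1,0),(0,2),(1,2)\}$, and your computation gives $G(\cA)\cong G/\langle w\rangle$ for whichever $w$ occurs. But $G/\langle(1,0)\rangle\cong G/\langle(1,2)\rangle\cong\bbZ/4\bbZ$. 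So unless you prove $w=(0,2)$, you have only shown that $\cA$ is a rank-six quadratic category for \emph{some} group of order $4$.

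Neither proposed fix does this. First, $(0,2)$ is the unique nonzero element of $2G$, so every automorphism of $G$ fixes it. No automorphism, even one lifting to an autoequivalence of $\cC$, can move $(1,0)$ or $(1,2)$ onto it. Second, ``a square is a boson'' fails for general quadratic forms: $q(k)=e^{2\pi i k^2/8}$ on $\bbZ/4\bbZ$ has $q(2)=-1$. To decide the question you would need the actual half-braidings and twists of the invertible objects of $\cZ(\cC)$, which you do not supply. The alternative of computing $\cI(z)$ and finding a boson in it is only a plan. The paper's proof of the lemma is silent on this point as well; it is settled only in the proof of Corollary~\ref{nonselfdual-categorification}.

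\textbf{How to close it.} You can use the same external input you already rely on, as the paper does in that corollary. We have $\cZ(\cA)\simeq\cZ(\cC)^0_{\bbZ/2\bbZ}\simeq\cD\boxtimes\cC(\bbZ/2\bbZ,q)$, whose invertible objects are $\{\unit,f\}\times\{\unit,b\}\cong\bbZ/2\bbZ\times\bbZ/2\bbZ$. The forgetful functor $\cZ(\cA)\to\cA$ is injective on these, because no nontrivial invertible object occurs in $\cI(\unit_\cA)$; equivalently, $\cA$ has trivial universal grading. To see this, a short dimension count bounds the grading group by $2$. A faithful $\bbZ/2\bbZ$-grading would make the trivial component a near-group category of type $\bbZ/2\bbZ+4$, which is excluded by \cite{ostrik2015pivotal}. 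Hence $\bbZ/2\bbZ\times\bbZ/2\bbZ$ embeds in $G/\langle w\rangle$, which forces $w=(0,2)$.

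\textbf{Smaller points.}
\begin{enumerate}
\item $\FPdim(X_i)=\FPdim(X)/2=\chi_6^2$, not $\FPdim(X)$; your own dimension count contradicts the latter.
\item The count is unnecessary anyway. Every simple $A$-module is a summand of a free one, and $X_i$ is not invertible because $[X\otimes A,g\otimes A]_A=[X,g\oplus gw]=0$.
\item Your commutativity argument (left and right stabilizers agree, and a group of order $4$ is abelian) is correct. It holds for either possible $G(\cA)$ and is more elementary than the paper's formal-codegree argument.
\end{enumerate}
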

\begin{proof}
Since $\cC$ is trivially graded, the Tannakian fusion category $\Rep(\bbZ/2\bbZ)$ of $\cZ(\cC)$ must be embedded into $\cC$ via the forgetful functor $\cF$, hence $\cC\cong\cA^{\bbZ/2\bbZ}$ by \cite[Proposition 2.10]{etingof2011weakly}. Meanwhile \cite[Proposition 4.26]{drinfeld2010braided} states that the dimension of simple object of $\cA$ are $1,\chi_6^2$, notice that the expression  $\dim(\cA)=24+8\sqrt{6}=4+2(\chi_6^2)^2$  is unique, so $\text{rank}(\cA)=6$. If $\cK_0(\cA)$ is not commutative, then there is a unique irreducible representation $\phi$ of dimension $2$, and  $\frac{1}{\dim(\cA)}+\frac{1}{\sigma(\dim(\cA))}+\frac{2}{f_\phi}=1$, where $f_\phi$ is the formal codegree corresponding to $\phi$, then $3\mid \dim(\phi)$, it is impossible. Thus, $\cK_0(\cA)$ must be commutative.
\end{proof}
Following \cite[Theorem 3.2]{rowellsolomonzhang}, the isomorphism classes of simple objects of $\cD_0$ and $\cD$ are denoted by 
 \begin{align*}
 \cO(\cD_0)=\{&F(\unit),F(Y_{(0,0)}),(Z_{(1,1),(1,3)})_1,(Z_{(1,1),(1,3)})_2,F(W_3),\\&F(X_{(1,0)}),F(Y_{(1,0)}),(Z_{(0,1),(0,3)})_1,(Z_{(0,1),(0,3)})_2,F(W_1)\},\\
\cO(\cD)=\cO(\cD_0)\cup\{
 &F(Z_{(0,0),(1,0)}),F(Z_{(0,0),(1,2)}),F(W_{21}),F(W_{23}),(W_{25})_1,(W_{25})_2,(W_{26})_1,(W_{26})_2\},
\end{align*}
where $F$ is the tensor functor determined by regular algebra of $\Rep(\bbZ/2\bbZ)$. Moreover, the dimensions and twists of these simple objects are given in \cite[Table 7]{rowellsolomonzhang}, we include it here for later use.
\begin{center}
\begin{tabular}{|c|c|c|}
\hline
\textbf{dim} & \textbf{Objects} & \textbf{Twists}  \\ \hline
1& $F(\unit)$, $F(X_{(1,0)})$&1,-1\\
\hline
$\chi_{24}^5$& $F(Y_{(0,0)})$,$ F(Y_{(1,0)})$&1,-1\\
\hline
$2\chi_6^3$& $F(Z_{(0,0),(1,0)}),F(Z_{(0,0),(1,2)})$&1,1\\
\hline
$\chi_6^3$&$(Z_{(1,1),(1,3)})_1,(Z_{(1,1),(1,3)})_2$,$(Z_{(0,1),(0,3)})_1,(Z_{(0,1),(0,3)})_2,$ &$i,i,-i,-i$\\
\hline 
$\chi_{24}^4$&$F(W_1),F(W_3)$, $F(W_{21}),F(W_{23})$, &$e^\frac{2\pi}{3},e^\frac{-\pi}{3}$,$e^\frac{11\pi}{12},e^\frac{11\pi}{12}$\\
\hline 
$\chi_6^2$& $(W_{25})_1,(W_{25})_2,(W_{26})_1,(W_{26})_2$& $e^\frac{\pi}{4},e^\frac{\pi}{4},e^\frac{\pi}{4},e^\frac{\pi}{4}$\\
\hline
\end{tabular}
\end{center}
In order to show the fusion category $\cA$ categorifies the fusion ring $R'(2)$,  we need to know more information (mainly the $S$-matrix) about modular fusion category $\cD$. 
 The $S$-matrix of fusion subcategory $\cD_0$ is given  in \cite{rowellsolomonzhang},  we determine the $S$-matrix of $\cD$ later. 
\begin{lem}
The simple objects $F(Z_{(0,0),(1,0)})$ and $F(Z_{(0,0),(1,2)})$ are self-dual. Moreover, 
\begin{align*}
 S_{F(Z_{(0,0),(1,0)}),(Z_{(1,1),(1,3)})_i}&=-S_{F(Z_{(0,0),(1,2)}),(Z_{(1,1),(1,3)})_j}=-2\chi_6^3,\\
 S_{F(Z_{(0,0),(1,0)}),F(W_k)_i}&=S_{F(Z_{(0,0),(1,2)}),F(W_k)_i}=0, \end{align*}
 $k\in\{25,26\}$, $i,j\in\{1,2\}$.
\end{lem}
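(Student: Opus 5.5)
We will derive the statement from the structure of $\cD$ as a spin modular category containing the super-modular subcategory $\cD_0$, using only balancing/Verlinde-type identities and the dimension and twist data in the table above.

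\textbf{Self-duality.} Duality preserves dimensions and twists. In the table, the only simple objects of dimension $2\chi_6^3$ are $F(Z_{(0,0),(1,0)})$ and $F(Z_{(0,0),(1,2)})$, and both have twist $1$. So either each is self-dual, or they are dual to each other. To exclude the second option, we use that $F$ is a tensor functor: the object $Z_{(0,0),(1,0)}$ in $\cZ(\cC)$ is self-dual, since its label is fixed under negation in $\bbZ/2\bbZ\times\bbZ/4\bbZ$ ($-(1,0)=(1,0)$ and $-(0,0)=(0,0)$). Its image under $F$ is therefore self-dual, and then so is the remaining object.

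\textbf{The $S$-entries.} We use the balancing equation
\[
S_{A,B}=\theta_A^{-1}\theta_B^{-1}\sum_C N_{A^*,B}^C\,\theta_C\dim(C).
\]
Set $A=F(Z_{(0,0),(1,0)})$, of dimension $2\chi_6^3$ and twist $1$, and $B=(Z_{(1,1),(1,3)})_i$, of dimension $\chi_6^3$ and twist $i$. The product $A\otimes B$ has dimension $2(\chi_6^3)^2=2(15+6\sqrt6)$. Its summands must be compatible with the grading: $\cD$ is graded by $\bbZ/2\bbZ$ with trivial component $\cD_0$, and $B\in\cD_0$ while $A\notin\cD_0$, so every summand lies in the nontrivial component $\{F(Z),F(W_{21}),F(W_{23}),(W_{25})_j,(W_{26})_j\}$. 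We will solve for the nonnegative integer combinations of these dimensions, namely $2\chi_6^3$, $\chi_{24}^4$ and $\chi_6^2$, that sum to $30+12\sqrt6$. For each candidate decomposition we compute the resulting $S$-value and require $|S_{A,B}|\le \dim A\dim B$. We also use the $\bbZ/2\bbZ$-symmetry given by fusion with the fermion $F(X_{(1,0)})$, which fixes $A$ up to swapping with $F(Z_{(0,0),(1,2)})$. We expect these constraints to force $S_{A,B}=\pm 2\chi_6^3$, with opposite signs for the two $Z$-objects.

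The sign pattern should come from orthogonality of rows of $S$ together with the known $S$-matrix of $\cD_0$ from \cite{rowellsolomonzhang}. The key identity is
\[
\sum_B S_{A,B}\overline{S_{A',B}}=\dim(\cD)\,\delta_{A,A'}.
\]
For the vanishing entries against $(W_{25})_j$ and $(W_{26})_j$, we apply the same balancing equation. Twist ratios such as $e^{\pi i/4}$ combined with the twists of the summands should give a sum whose only value consistent with the Galois action is $0$. Here we use equation \eqref{twentyseven} and the fact that $\hat\sigma$ permutes the objects of dimension $\chi_6^2$, which have norm $-2$, up to sign.

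The main obstacle is the second step: pinning down the fusion $A\otimes B$ precisely enough to evaluate the balancing sum. If direct dimension counting leaves several candidates, our first fallback is orthogonality. The column of $F(X_{(1,0)})$ forces $S_{A,B}$ to change sign under fusion with the fermion. Combined with $\sum_B |S_{A,B}|^2=\dim\cD$ and the prescribed entries of $A$ against objects of $\cD_0$, this should leave only the stated values.
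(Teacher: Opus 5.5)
Your self-duality step takes a different route from the paper, and it works once you justify that duality acts on the labels by negation. Since $[\cI(g),\cI(h)]=1$ for $g\neq h$, the object $Z_{g,h}$ is the unique common simple summand of $\cI(g)$ and $\cI(h)$. The forgetful functor commutes with duals, so $Z_{g,h}^*=Z_{-g,-h}$. Finally $F$ preserves duals, and self-duality passes to the $\cD$-factor because the nontrivial object of $\cC(\bbZ/2\bbZ,q)$ is self-dual. The paper instead argues by contradiction. It combines $\overline{S_{A,Z}}=S_{A^*,Z}$ with the real sums $\mp4\chi_6^3$ of the entries of $F(Z_{(0,0),(1,0)})$ and $F(Z_{(0,0),(1,2)})$ against the two halves of $Z_{(1,1),(1,3)}$, which it takes from \cite[Theorem 2.2]{rowellsolomonzhang}.

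The gap is in the $S$-entries. Put $A=F(Z_{(0,0),(1,0)})$, $A'=F(Z_{(0,0),(1,2)})$ and $B_j=(Z_{(1,1),(1,3)})_j$. Your dimension count is easy to finish. The equation $a(6+2\sqrt6)+b(4+2\sqrt6)+c(2+\sqrt6)=30+12\sqrt6$ forces $a=3$ and $2b+c=6$. So $A\otimes B_j$ has exactly three summands of dimension $2\chi_6^3$, each of twist $1$ in the table, and the remaining summands have total dimension $6\chi_6^2$. Whatever the twists of the $W$'s are,
\[ \operatorname{Re}\sum_C N_{A,B_j}^C\theta_C\dim C\geq 3\cdot 2\chi_6^3-6\chi_6^2=6>0, \]
while $\theta_A\theta_{B_j}=\pm i$. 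So for every candidate decomposition the balancing equation returns a non-real $S_{A,B_j}$. That contradicts the reality forced by your own first step, and it is never $-2\chi_6^3$.

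Your planned computation therefore cannot prove the statement; in fact it shows the tabulated twists are incompatible with it. The Galois/balancing sketch for the zero entries is not an argument either. Your orthogonality fallback is the right tool, but as written it is circular. The ``prescribed entries of $A$ against objects of $\cD_0$'' include the entries against $B_1,B_2$, which are exactly what you must determine.

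The missing input is the $S$-matrix of $\cZ(\cC)$ itself, from \cite[Theorem 2.2]{rowellsolomonzhang}. It gives $S_{A,B_1}+S_{A,B_2}=-4\chi_6^3$; this sum is fixed by the entry in $\cZ(\cC)$ of the unsplit fixed point. It also gives $S_{A,F(Y_{(0,0)})}=2\chi_6^3$ and $S_{A,A}=S_{A,A'}=S_{A,F(W_l)}=0$ for $l\in\{1,3,21,23\}$.

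Now use $S_{f\otimes W,A}=-S_{W,A}$ for $W\in\cD_0$, where $f=F(X_{(1,0)})$. The row norm $\sum_Y|S_{A,Y}|^2=\dim\cD=8(2\chi_6^3)^2$, after removing the four entries $\pm2\chi_6^3$ against $\unit, f, F(Y_{(0,0)}), F(Y_{(1,0)})$, gives $|S_{A,B_1}|^2+|S_{A,B_2}|^2\leq 8(\chi_6^3)^2$. Cauchy--Schwarz with the fixed sum gives the reverse inequality. Equality forces $S_{A,B_1}=S_{A,B_2}=-2\chi_6^3$ and exhausts the row, so the entries against $(W_{25})_j$ and $(W_{26})_j$ vanish. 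The same argument works for $A'$ with sum $+4\chi_6^3$. This is the paper's argument, and it uses neither fusion rules nor twists.
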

\begin{proof}
On the contrary, assume $F(Z_{(0,0),(1,0)})^*=F(Z_{(0,0),(1,2)})$, then for any simple object $Z$ of $\cD$, we have $\overline{S_{F(Z_{(0,0),(1,0)}),Z}}=S_{F(Z_{(0,0),(1,2)}),Z}$. However, it follows from \cite[Theorem 2.2]{rowellsolomonzhang} that \begin{align*}
S_{F(Z_{(0,0),(1,0)}),(Z_{(1,1),(1,3)})_1}+S_{F(Z_{(0,0),(1,0)}),(Z_{(1,1),(1,3)})_2}=-4\chi_6^3,\\
S_{F(Z_{(0,0),(1,2)}),(Z_{(1,1),(1,3)})_1}+S_{F(Z_{(0,0),(1,2)}),(Z_{(1,1),(1,3)})_2}=4\chi_6^3,
\end{align*}
it is a contradiction. So $F(Z_{(0,0),(1,0)})$ and $F(Z_{(0,0),(1,2)})$ are self-dual. Let $V\in\{F(Z_{(0,0),(1,0)}),F(Z_{(0,0),(1,2)}\}$. Then for  $i\in\{1,3,21,23\}$ and $V'\in\{F(Z_{(0,0),(1,0)}),F(Z_{(0,0),(1,2)}\}$,  \cite{rowellsolomonzhang} implies
\begin{align*}
S_{F(Y_{(0,0)}),V}=2\chi_6^3=-S_{F(Y_{(1,0)}),V}, \quad S_{V,V'}=S_{F(W_i),V}=0.
\end{align*}
Let $\sigma\in \mathrm{Gal}(\bbQ(\sqrt{6})/\bbQ))$ such that $\sigma(\sqrt{6})=-\sqrt{6}$, so \begin{align*}
  \hat{\sigma}(F(Z_{(0,0),(1,0)}))=F(Z_{(0,0),(1,0)}),\quad \hat{\sigma}(F(Z_{(0,0),(1,2)}))=F(Z_{(0,0),(1,2)}).
  \end{align*}
  Hence, $\frac{S_{F(Z_{(0,0),(1,0)}),Y}}{2\chi_6^2}\in\bbZ$ for all simple objects $Y$ of $\cD$. 
  
 Since $F(X_{(1,0)})$ generates $\text{sVec}$, $S_{F(X_{(1,0)})\otimes W,V}=-S_{W,V}$ for all $W\in\cD_0$ and  $V\notin \cD_0$ by \cite{bruillardetal2017fermion}. 
  Then the Verlinde formula implies   
 $(S_{F(Z_{(0,0),(1,0)}),(Z_{(1,1),(1,3)})_1})^2+(S_{F(Z_{(0,0),(1,0)}),(Z_{(1,1),(1,3)})_2})^2\leq 8(\chi_6^3)^2$,
 which  shows 
 \begin{align*}
 S_{F(Z_{(0,0),(1,0)}),(Z_{(1,1),(1,3)})_i}=-S_{F(Z_{(0,0),(1,2)}),(Z_{(1,1),(1,3)})_j}=-2\chi_6^3, \end{align*}
 where $i,j\in\{1,2\}$. Hence, $S_{F(Z_{(0,0),(1,0)}),F(W_k)_i}=S_{F(Z_{(0,0),(1,2)}),F(W_k)_i}=0$, $k\in\{25,26\}$, $i\in\{1,2\}$.
\end{proof}
 
\begin{lem}
    For simple objects $F(W_{1})$ and $F(W_{3})$, we have $S_{F(W_3),(W_k)_i}=-S_{F(W_1),(W_k)_j}=2\chi_6^2$, where  $k\in\{25,26\}$, $i,j\in\{1,2\}$.
\end{lem}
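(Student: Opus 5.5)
We will determine the entries $S_{F(W_1),(W_k)_j}$ and $S_{F(W_3),(W_k)_i}$ the same way the previous lemma handled $F(Z_{(0,0),(1,0)})$ and $F(Z_{(0,0),(1,2)})$: combine the part of the $S$-matrix already known from \cite[Theorem 2.2, Theorem 3.2]{rowellsolomonzhang} with orthogonality and Galois constraints.

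\textbf{Known $S$-entries of $F(W_1)$ and $F(W_3)$.} Since $F(W_1),F(W_3)\in\cD_0$, their $S$-entries against $\cD_0$ are given in \cite{rowellsolomonzhang}. By the previous lemma, their entries against $F(Z_{(0,0),(1,0)})$ and $F(Z_{(0,0),(1,2)})$ vanish. So the only unknowns in the rows of $F(W_1)$ and $F(W_3)$ are the entries against $(W_{25})_{1,2}$, $(W_{26})_{1,2}$ and against $F(W_{21}),F(W_{23})$.

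\textbf{Sums over the split pairs.} As in the proof of the previous lemma, \cite[Theorem 2.2]{rowellsolomonzhang} describes induction/restriction along the $\Rep(\bbZ/2\bbZ)$ de-equivariantization. It expresses each sum $S_{F(W_a),(W_k)_1}+S_{F(W_a),(W_k)_2}$, for $k\in\{25,26\}$, through the $S$-matrix of $\cZ(\cC)$, which is known for the near-group center. We expect these sums to be $\pm4\chi_6^2$: plus for $W_3$ and minus for $W_1$. The ratio $S_{F(W_a),V}/\dim V$ is a character value, hence an algebraic integer in $\bbQ(\sqrt6)$, and $\dim(W_k)_i=\chi_6^2$. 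Therefore each individual entry has the form $\chi_6^2\cdot\alpha$ with $\alpha\in\bbZ[\sqrt6]$.

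\textbf{Forcing equal halves.} Use unitarity of the normalized $S$-matrix, i.e.\ $\sum_Z|S_{F(W_a),Z}|^2=\dim(\cD)$. Subtract the known contributions from $\cD_0$. Also use the fermion relation $S_{F(X_{(1,0)})\otimes W,V}=-S_{W,V}$ of \cite{bruillardetal2017fermion}, which pairs $F(W_1)$ with $F(W_3)$ up to twist. We plan to show that the remaining budget equals exactly $4(2\chi_6^2)^2$ plus the contribution of $F(W_{21}),F(W_{23})$. Given the fixed sums $\pm4\chi_6^2$, the sum of squares of two real numbers with a fixed sum is minimized when they are equal. So the budget forces each entry to be $\pm2\chi_6^2$, with sign determined by the sum. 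Reality of these entries should follow from self-duality of $(W_k)_i$, or else from pairing them with their conjugates, and then the same argument runs on $|\cdot|^2$.

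\textbf{Galois conjugation.} To deal with the $\bbZ[\sqrt6]$-coefficients, we also apply $\hat\sigma$ for $\sigma(\sqrt6)=-\sqrt6$, using Equation \eqref{twentyseven}. It permutes the $\chi_6^2$-dimensional objects among themselves, so the inequality holds for both conjugates. This excludes non-rational $\alpha$ whose norm is small but whose conjugate is large.

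\textbf{Main obstacle.} The hard step is the exact bookkeeping of the norm budget. It needs the unknown entries against $F(W_{21}),F(W_{23})$, and we would first try to show they vanish or are determined, perhaps by the Verlinde formula applied to $F(W_1)\otimes F(W_1)$. If that fails, we will bound them below by zero and check that the inequality alone still pins down $\pm2\chi_6^2$.
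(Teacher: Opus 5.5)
\paragraph{Gap: the row-norm budget is not exact.}
Your mechanism is the same as the paper's. For each $k\in\{25,26\}$ the two entries $S_{F(W_3),(W_k)_1},S_{F(W_3),(W_k)_2}$ have fixed sum $S_{W_3,W_k}=4\chi_6^2$. The row norm $\sum_Z|S_{F(W_3),Z}|^2=\dim(\cD)=48(\chi_6^2)^2$ then leaves room only for the even split. This works only if the budget is exact, and your plan leaves exactly that point open.

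The six nonzero $\cD_0$-entries of the row contribute $24(\chi_6^2)^2$. That leaves $24(\chi_6^2)^2$ for the four split entries together with $S_{F(W_3),F(W_{21})}$ and $S_{F(W_3),F(W_{23})}$. The paper's term $8(2\chi_6^2)^2$ is those six $\cD_0$-entries plus $2(2\chi_6^2)^2$ coming from $F(W_{21}),F(W_{23})$. Only with that input does $a^2+(2-a)^2+b^2+(2-b)^2=4$ force $a=b=1$.

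Your fallback of bounding the $F(W_{21}),F(W_{23})$ contribution below by $0$ leaves a slack of $8(\chi_6^2)^2$, and the slack is realised. Take $S_{F(W_3),(W_{25})_1}=0$, $S_{F(W_3),(W_{25})_2}=4\chi_6^2$, $S_{F(W_3),(W_{26})_i}=2\chi_6^2$, and zero entries against $F(W_{21}),F(W_{23})$. This satisfies the norm identity, the pair sums, reality, and every integrality and $\hat\sigma$-condition you list. Your other option fails as well: those two entries do not vanish (both equal $-2\chi_6^2$).

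\paragraph{How to close it.}
There are two ways to supply the missing input.
\begin{itemize}
\item As the paper does: $F(W_3)$, $F(W_{21})$ and $F(W_{23})$ all come from simple objects of $\cZ(\cC)$ that stay simple after de-equivariantization. So \cite[Theorem 2.2]{rowellsolomonzhang} gives $S_{F(W_3),F(W_{2j})}$ directly from the $S$-matrix of $\cZ(\cC)$, exactly like the $\cD_0$-entries you already treat as known.
\item Use orthogonality of the $F(W_3)$-row with the row of $\unit$, i.e.\ $\sum_Z S_{F(W_3),Z}\dim(Z)=0$. The $\cD_0$-part is $2\cdot 2\chi_6^2(1-\chi_{24}^5+2\chi_6^2)=0$. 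The part from $F(Z_{(0,0),(1,0)}),F(Z_{(0,0),(1,2)})$ is $0$. The split part is $\chi_6^2\cdot 8\chi_6^2$. Hence $S_{F(W_3),F(W_{21})}+S_{F(W_3),F(W_{23})}=-4\chi_6^2$, and by Cauchy--Schwarz these two entries contribute at least $8(\chi_6^2)^2$. This closes the slack exactly and even recovers their values.
\end{itemize}

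\paragraph{Minor points.}
Once the budget is exact, your Galois/integrality paragraph is unnecessary: $|x|^2+|c-x|^2\ge |c|^2/2$, with equality iff $x=c/2$, holds over $\bbC$. As written it also overclaims. $\bbQ(S)$ contains $i,\sqrt2,\sqrt3$, so nothing places $S_{F(W_a),V}/\dim(V)$ in $\bbQ(\sqrt6)$.

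Reality of the entries comes from self-duality of $F(W_3)$ and $F(W_1)$, not of the $(W_k)_i$, which are in fact not self-dual.
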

\begin{proof}
  It is easy to see $F(W_3)$ is self dual, and $\hat{\sigma}(F(W_3))=F(W_3)$, where $\sigma\in \mathrm{Gal}(\bbQ(\sqrt{6})/\bbQ))$ such that  $\sigma(\sqrt{6})=-\sqrt{6}$. Since $F(W_{21})^*=F(W_{23})$,
$\frac{S_{F(W_3),(W_{25})_i}}{2\chi^2_6}\in \bbZ$ and $\frac{S_{F(W_3),(W_{26})_j}}{2\chi^2_6}\in \bbZ$.  Let $a:=\frac{S_{F(W_3),(W_{25})_i}}{2\chi^2_6}$ and $b:=\frac{S_{F(W_3),(W_{26})_j}}{2\chi^2_6}$. Since  $S_{W_3,W_{25}}=S_{W_3,W_{26}}=4\chi_6^2$ by \cite[Theorem 2.2]{rowellsolomonzhang}, we have
\begin{align*}
\dim(\cD)=\sum_X S_{X,F(W_3)}^2=8(2\chi_6^2)^2+(a2\chi^2_6)^2+((2-a)2\chi^2_6)^2+(b2\chi^2_6)^2+((2-b)2\chi^2_6)^2=48(\chi^2_6)^2,
\end{align*}
which means $a^2+(2-a)^2+b^2+(2-b)^2=4$, so $a=b=1$. Hence $S_{F(W_3),(W_{25})_i}=S_{F(W_3),(W_{26})_j}=2\chi^2_6,$ where $ i,j\in \{1,2\}$.  
\end{proof}

\begin{prop}\label{W21W23}
  For simple objects $F(W_{21})$ and $F(W_{23})$ and $j\in\{1,2\}$, we have   \begin{align*}S_{F(W_{21}),(Z_{(1,1),(1,3)})_j}=0, S_{F(W_{21}),(W_{25})_j}=-S_{F(W_{21}),(W_{26})_j}=2\chi_6^2i.
  \end{align*}
\end{prop}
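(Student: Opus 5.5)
The plan is to mimic the two preceding lemmas: pin down the unknown $S$-entries using the fermionic ($\mathrm{sVec}$) structure, a Galois symmetry, integrality, and the unitarity (orthogonality) of the $S$-matrix.

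\emph{Step 1: the entries against $(Z_{(1,1),(1,3)})_j$.} Since $F(W_{21})^*=F(W_{23})$, we have $S_{F(W_{23}),Z}=\overline{S_{F(W_{21}),Z}}$ for every simple $Z$. From \cite[Theorem 2.2]{rowellsolomonzhang}, the sum $S_{F(W_{21}),(Z_{(1,1),(1,3)})_1}+S_{F(W_{21}),(Z_{(1,1),(1,3)})_2}$ is the image of the corresponding $S$-entry in the equivariantization, and I expect it to vanish. Next use the fermion $F(X_{(1,0)})$: because $F(W_{21})\notin\cD_0$, we get $S_{F(X_{(1,0)})\otimes W,F(W_{21})}=-S_{W,F(W_{21})}$ for $W\in\cD_0$ \cite{bruillardetal2017fermion}. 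Tensoring with $F(X_{(1,0)})$ exchanges $(Z_{(1,1),(1,3)})_j$ with $(Z_{(0,1),(0,3)})_{j'}$. Combine this with the orthogonality of the row of $F(W_{21})$ against the rows of $F(Z_{(0,0),(1,0)})$ and $F(Z_{(0,0),(1,2)})$, which were computed in the previous lemma. This gives $S_{F(W_{21}),(Z_{(1,1),(1,3)})_1}=S_{F(W_{21}),(Z_{(1,1),(1,3)})_2}$, and together with the vanishing sum, both entries are $0$.

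\emph{Step 2: the entries against $(W_{25})_j$ and $(W_{26})_j$.} Let $\sigma(\sqrt6)=-\sqrt6$. Since $\hat\sigma$ respects dimensions via Equation \eqref{twentyseven}, it permutes the class $\{F(W_1),F(W_3),F(W_{21}),F(W_{23})\}$ among objects of dimension $\chi_{24}^4$. Following the argument of the previous lemma, I will show that $S_{F(W_{21}),(W_k)_j}/(2\chi_6^2)$ lies in $\bbZ[i]$; the entries lie in $\bbQ(\zeta_N)$, and the twists $e^{11\pi i/12}$ force a factor of $i$ through the balancing equation. Then impose the norm condition
\[
\dim(\cD)=\sum_Z |S_{F(W_{21}),Z}|^2=48(\chi_6^2)^2 .
\]
The contributions from $\cD_0$ and the $F(Z_{(0,0),\ast})$ rows are known: the $\cD_0$ part is read off from \cite{rowellsolomonzhang}, and Step 1 supplies the rest. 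The remaining budget then forces $|S_{F(W_{21}),(W_k)_j}|=2\chi_6^2$ for all four indices.

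\emph{Step 3: fixing the phases.} Two relations determine the phases. First, the sums $S_{F(W_{21}),(W_{25})_1}+S_{F(W_{21}),(W_{25})_2}$ and the analogous sum for $W_{26}$ come from the parent $S$-matrix entries $S_{W_{21},W_{25}}$ and $S_{W_{21},W_{26}}$ in \cite[Theorem 2.2]{rowellsolomonzhang}; these should equal $\pm4\chi_6^2 i$, which forces equality within each pair. Second, orthogonality with the rows of $F(W_1)$ and $F(W_3)$, computed in the previous lemma, gives
\[
\sum_j\Bigl(S_{F(W_{21}),(W_{25})_j}+S_{F(W_{21}),(W_{26})_j}\Bigr)=0
\]
after cancellation. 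Together these yield $S_{F(W_{21}),(W_{25})_j}=-S_{F(W_{21}),(W_{26})_j}=2\chi_6^2 i$; the overall sign is fixed by the convention or twist in the balancing formula.

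The main obstacle is Step 2. Showing that the entries are Gaussian-integer multiples of $2\chi_6^2$, and not merely bounded in modulus, requires a careful use of the balancing equation $\theta_a\theta_b S_{a,b}=\sum_c N_{a^*b}^c\theta_c\dim(c)$ with the twists $e^{\pi i/4}$ and $e^{11\pi i/12}$. If that fails, I will fall back on the Verlinde formula applied to $N_{F(W_{21}),F(W_{23})}^{\unit}=1$ with the known rows, to bound the entries' moduli directly.
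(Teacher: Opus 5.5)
Your Step 1 does not go through. The rows of $F(Z_{(0,0),(1,0)})$ and $F(Z_{(0,0),(1,2)})$ from the previous lemma take the \emph{same} value on $(Z_{(1,1),(1,3)})_1$ and $(Z_{(1,1),(1,3)})_2$: $-2\chi_6^3$ for the first row and $2\chi_6^3$ for the second. Both rows vanish outside $\cD_0$. The fermion rule only says that the second half of $\cD_0$ contributes exactly what the first half does, since both rows change sign. So orthogonality of the $F(W_{21})$ row against these two rows collapses to $S_{F(W_{21}),F(\unit)}+S_{F(W_{21}),F(Y_{(0,0)})}\mp\bigl(S_{F(W_{21}),(Z_{(1,1),(1,3)})_1}+S_{F(W_{21}),(Z_{(1,1),(1,3)})_2}\bigr)=0$. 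This sees only the sum, which you already know vanishes, and says nothing about the difference. The claimed equality of the two entries is therefore unsupported. What duality does give, via $F(W_{21})^*=F(W_{23})$ and $(Z_{(1,1),(1,3)})_1^*=(Z_{(1,1),(1,3)})_2$, is $S_{F(W_{21}),(Z_{(1,1),(1,3)})_2}=\overline{S_{F(W_{21}),(Z_{(1,1),(1,3)})_1}}$. Combined with the vanishing sum, this only makes the entry purely imaginary, $2\chi_6^2 a i$ with $a$ real; killing $a$ needs a quadratic relation. Your Step 2 budget uses Step 1, so it inherits the gap. Moreover, the budget alone does not force $|S_{F(W_{21}),(W_k)_j}|=2\chi_6^2$: one entry of modulus $4\chi_6^2$ and three zeros also fits. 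So Steps 2 and 3 cannot be run in the order you state.

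The paper keeps $a$, $b_1=S_{F(W_{21}),(W_{25})_1}/(2\chi_6^2)$ and $b_2=S_{F(W_{21}),(W_{26})_1}/(2\chi_6^2)$ as unknowns. The $(W_k)_2$ entries become $2i-b_1$ and $-2i-b_2$ by the parent sums. It then solves the norm equation of the $F(W_{21})$ row together with $\sum_Z S_{F(W_{21}),Z}^2=0$, which is orthogonality with $F(W_{23})$. Your own ingredients also suffice if you merge the three steps into one count. Three groups of entries are fixed by the parent and the previous lemmas: the six non-split entries on $\cD_0$, the zeros on $F(Z_{(0,0),\ast})$, and $S_{F(W_{21}),F(W_{21})}$, $S_{F(W_{21}),F(W_{23})}$. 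Your budget omits the last two. Together these account for $32(\chi_6^2)^2$ of $\dim(\cD)=48(\chi_6^2)^2$. Cauchy--Schwarz on the pair sums $4\chi_6^2 i$ and $-4\chi_6^2 i$ already demands at least $8(\chi_6^2)^2$ from each of the pairs $(W_{25})_{1,2}$ and $(W_{26})_{1,2}$. Equality then forces all four entries on $(Z_{(1,1),(1,3)})_j$ and $(Z_{(0,1),(0,3)})_j$ to vanish and each pair to be constant, which is exactly the statement. No $\bbZ[i]$-integrality is needed, so the step you flagged as the main obstacle can be skipped; the step you treated as routine is the one that fails.
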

\begin{proof}
  Let $\sigma\in \mathrm{Gal}(\bbQ(\sqrt{6})/\bbQ))$ such that  $\sigma(\sqrt{6})=-\sqrt{6}$, then $\hat{\sigma}(F(W_{21}))=F(W_{21})$ or  $\hat{\sigma}(F(W_{21}))=F(W_{23})$. If $\hat{\sigma}(F(W_{21}))=F(W_{23})$, let $S_{F(W_{21}),(W_{25})_1}=a$, $S_{F(W_{23}),(W_{25})_1}=b$, so  \cite[Theorem 2.2]{rowellsolomonzhang} implies \begin{align*}
   S_{F(W_{21}),(W_{25})_2}=4\chi^2_6i-a,\quad S_{F(W_{23}),(W_{25})_2}=-4\chi^2_6i-b, 
  \end{align*} then $\sigma(\frac{S_{F(W_{21}),(W_{25})_1}}{2\chi^2_6})=\frac{S_{F(W_{23}),(W_{25})_1}}{2\chi^2_6}$, it is impossible. So $\hat{\sigma}(F(W_{21}))=F(W_{23})$.

Note that $\frac{S_{F(W_{21}),X}}{2\chi^2_6}\in \bbZ[i]$ for all simple objects $X$ of $\cD$, and  $S_{F(W_{21})(Z_{(1,1),(1,3)})_1}+S_{F(W_{21}),(Z_{(1,1),(1,3)})_2}=0$ by \cite[Theorem 2.2]{rowellsolomonzhang}, so the real part of $S_{F(W_{21}),(Z_{(1,1),(1,3)})_1}$ must be zero. As $F(W_{21})^*=F(W_{23})$ and $((Z_{(1,1),(1,3)})_1)^*=(Z_{(1,1),(1,3)})_2$, we have \begin{align*}
ai:=\frac{S_{F(W_{21}),(Z_{(1,1),(1,3)})_1}}{2\chi^2_6}=\frac{S_{F(W_{23}),(Z_{(1,1),(1,3)})_2}}{2\chi^2_6}, \quad  -ai=\frac{S_{F(W_{21}),(Z_{(1,1),(1,3)})_1}}{2\chi^2_6}=\frac{S_{F(W_{23}),(Z_{(1,1),(1,3)})_2}}{2\chi^2_6}. 
\end{align*} Let $b_1:=\frac{S_{F(W_{21}),(W_{25})_1}}{2\chi^2_6}$, $b_1\in \bbZ[i]$, then
\begin{align*}
b_1=\frac{S_{F(W_{21}),(W_{25})_1}}{2\chi^2_6},\;2i-b_1=\frac{S_{F(W_{21}),(W_{25})_2}}{2\chi_6^2},\; b_2=\frac{S_{F(W_{21}),(W_{26})_1}}{2\chi^2_6},\; -2i-b_2=\frac{S_{F(W_{21}),(W_{26})_2}}{2\chi^2_6}.
\end{align*}
Applying the orthogonality condition for $S_{F(W_{21}),-}$ and $S_{F(W_{23}),-}$, we obtain 
\begin{align*}
4-2a^2+b_1^2+(2i-b_1)^2+b^2_2 +(-2i-b_2)^2= 0,\quad
2|a|^2+|b_1|^2+|2i-b_1|^2+|b_2|^2+|-2i-b_2|^2=6,
\end{align*}
thus we get the desired solution $b_1 = i$, $b_2 = -i$, $a = 0$.
\end{proof}

\begin{thm}\label{S-matrix}
  The S-matrix and $T$-matrix of modular category $\mathcal{D}$ are
\[
S=\begin{bmatrix}
\hat{S} & \hat{S} & B\\
\hat{S} & \hat{S} & -B \\
B^T & -B^T & C
\end{bmatrix},
 \text{where}\quad 
\hat{S}=
\begin{bmatrix}
1 & \chi^5_{24} & \chi^3_{6} & \chi^3_{6} & 2\chi^2_{6} \\
\chi^5_{24} & 1 & \chi^3_{6} & \chi^3_{6}  & -2\chi^2_{6}\\
\chi^3_{6}  & \chi^3_{6}  & (\sqrt{2}i-1)\chi^3_{6}  & -(\sqrt{2}i+1)\chi^3_{6} & 0\\
\chi^3_{6}  & \chi^3_{6}   & -(\sqrt{2}i+1)\chi^3_{6}   & (\sqrt{2}i-1)\chi^3_{6} & 0\\
2\chi^2_{6}  & -2\chi^2_{6} & 0 & 0 & 2\chi^2_{6}
\end{bmatrix},
\]\\
\[
B^T=\begin{bmatrix}
2\chi^3_{6} & 2\chi^3_{6} & -2\chi^3_{6} & -2\chi^3_{6} & 0\\
2\chi^3_{6} & 2\chi^3_{6} & 2\chi^3_{6} & 2\chi^3_{6} & 0\\
2\chi^2_{6} & -2\chi^2_{6} & 0 & 0  & -2\chi^2_{6}\\
2\chi^2_{6} & -2\chi^2_{6} & 0 & 0 & -2\chi^2_{6}\\
\chi^2_{6} & -\chi^2_{6} & u_1 & \overline{u_1} & 2\chi^2_{6}\\
\chi^2_{6} & -\chi^2_{6} & u_2 & \overline{u_2}& 2\chi^2_{6}\\
\chi^2_{6} & -\chi^2_{6} & \overline{u_1} & u_1 & 2\chi^2_{6}\\
\chi^2_{6} & -\chi^2_{6} & \overline{u_2} & u_2 & 2\chi^2_{6}
\end{bmatrix},
C=\setlength{\arraycolsep}{3pt}
\begin{bmatrix}
0 & 0 & 0& 0 & 0 & 0 &0 & 0\\
0 & 0 & 0& 0 & 0 & 0 &0 & 0\\
0 & 0  & -2\chi^2_{6}i &  2\chi^2_{6}i &  2\chi^2_{6}i & 2\chi^2_{6}i & -2\chi^2_{6}i & -2\chi^2_{6}i\\
0 & 0  &  2\chi^2_{6}i & -2\chi^2_{6}i & -2\chi^2_{6}i & -2\chi^2_{6}i &  2\chi^2_{6}i & 2\chi^2_{6}i\\
0 & 0  &  2\chi^2_{6}i & -2\chi^2_{6}i & u_3 & u_4& \overline{u_3}& \overline{u_4}\\
0 & 0  &  2\chi^2_{6}i & -2\chi^2_{6}i &  u_4& u_3&\overline{u_4} & \overline{u_3}\\
0 & 0  & -2\chi^2_{6}i &  2\chi^2_{6}i & \overline{u_3}&  \overline{u_4}&u_3 &u_4\\
0 & 0  & -2\chi^2_{6}i &  2\chi^2_{6}i & \overline{u_4}& \overline{u_3} &u_4 & u_3
\end{bmatrix},
\]
\[
T = \text{diag}\;(1,\; 1,\; -i,\; -i,\; e^{\frac{2\pi i}{3}},\; -1,\; -1,\; i,\; i,\; -e^{\frac{\pi i}{3}},\; 1,\; 1,\; e^{\frac{11\pi i}{12}}, \; e^{\frac{11\pi i}{12}}, \; e^{\frac{\pi i}{4}},  \; e^{\frac{\pi i}{4}},  \; e^{\frac{\pi i}{4}},  \; e^{\frac{\pi i}{4}}),
\]
and 
 $u_1=\pm (3\sqrt{2}+2\sqrt{3})i$,
$ u_3=\pm(\sqrt{3}+ i)\chi_6^2$, $u_2=-u_1, u_4=-\overline{u_3}$.
\end{thm}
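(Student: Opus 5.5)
Most of the matrix is already pinned down by the preceding lemmas, and the plan is to fill in the rest block by block. Order the simple objects as: first the ten objects of $\cD_0$, listed as $F(\unit),F(Y_{(0,0)}),(Z_{(1,1),(1,3)})_1,(Z_{(1,1),(1,3)})_2,F(W_3)$ and then their fermionic partners $F(X_{(1,0)}),F(Y_{(1,0)}),(Z_{(0,1),(0,3)})_1,(Z_{(0,1),(0,3)})_2,F(W_1)$; then the eight objects outside $\cD_0$. The $T$-matrix can be read off from the table of twists, matched to this ordering.

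\textbf{The $\cD_0$ block.} The $10\times10$ block comes from the $S$-matrix of $\cD_0$ given in \cite{rowellsolomonzhang}. Since $F(X_{(1,0)})$ generates $\mathrm{sVec}$ and $\cD$ is spin modular, the standard structure of \cite{bruillardetal2017fermion} applies: tensoring with the fermion is transparent within $\cD_0$. This yields the $\begin{bmatrix}\hat S&\hat S\\ \hat S&\hat S\end{bmatrix}$ shape. The same reference gives the sign flip $S_{F(X_{(1,0)})\otimes W,V}=-S_{W,V}$ for $V\notin\cD_0$, which produces the $\pm B$ pattern. So it suffices to determine $B$ (the first five rows against the eight new objects) and $C$.

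\textbf{The $B$ block.} Most entries of $B$ come straight from the three preceding lemmas and Proposition \ref{W21W23}:
\begin{itemize}
\item the rows of $F(Z_{(0,0),(1,0)})$ and $F(Z_{(0,0),(1,2)})$, including their vanishing against $(W_{25})_i$ and $(W_{26})_i$;
\item the entries $S_{F(W_3),(W_k)_i}=2\chi_6^2$;
\item the $F(W_{21})$ and $F(W_{23})$ entries, using $F(W_{21})^*=F(W_{23})$ and $S_{X^*,Y}=\overline{S_{X,Y}}$.
\end{itemize}
The entries against $F(\unit)$ and $F(Y_{(0,0)})$ are the dimensions, together with the values $S_{F(Y_{(0,0)}),V}$ already quoted. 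The only genuinely new quantities in $B$ are $S_{(Z_{(1,1),(1,3)})_1,(W_{25})_j}$ and $S_{(Z_{(1,1),(1,3)})_1,(W_{26})_j}$, which we call $u_1,u_2$. To find them:
\begin{itemize}
\item The sum rule of \cite[Theorem 2.2]{rowellsolomonzhang} over the pair $(Z_{(1,1),(1,3)})_{1,2}$, combined with the $B$ column of $(W_{25})_j$, gives $u_1+u_2=0$, after comparing against the $\cD_0$ parent entry.
\item Unitarity of $S$, i.e.\ orthonormality of the $(Z_{(1,1),(1,3)})_1$ row against itself and against the $F(Y_{(0,0)})$ row, fixes $|u_1|^2$ and forces $u_1$ to be purely imaginary.
\item Galois invariance under $\sqrt6\mapsto-\sqrt6$ and $\sqrt2\mapsto-\sqrt2$, using the integrality of $S_{V,W}/S_{\unit,W}$ as in the earlier lemmas, should pin down $u_1=\pm(3\sqrt2+2\sqrt3)i$.
\end{itemize}

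\textbf{The $C$ block.} The rows of the first four new objects against one another are forced by the same lemmas: zeros for the $Z$-objects, and $\pm2\chi_6^2 i$ for $F(W_{21})$ and $F(W_{23})$ via Proposition \ref{W21W23} and symmetry of $S$. The remaining $4\times4$ block among $(W_{25})_{1,2}$ and $(W_{26})_{1,2}$ is the main obstacle. The plan is as follows.
\begin{itemize}
\item Use duality. Here $(W_{25})_i^*$ should be a $(W_{26})_j$, determined by the fact that the $F(W_{21})$ entries are conjugate; this yields the conjugate pattern $u_3,u_4,\overline{u_3},\overline{u_4}$.
\item Use the sum rule over the two lifts $(W_{25})_{1,2}$ of each parent object, giving a linear relation with the $\cD_0$-parent entry; this should be $u_4=-\overline{u_3}$.
\item Impose orthogonality of the $(W_{25})_1$ row with itself and with the rows of $F(\unit)$ and $F(W_3)$. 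This gives $|u_3|^2+|u_4|^2$ and $\mathrm{Re}$-type relations, leaving finitely many candidates.
\item Use the balancing relation $S_{X,Y}=\theta_X^{-1}\theta_Y^{-1}\sum_Z N_{X^*,Y}^Z\theta_Z\dim Z$, or equivalently $(ST)^3=p^+S^2$, with all twists $e^{\pi i/4}$, to select $u_3=\pm(\sqrt3+i)\chi_6^2$.
\end{itemize}
A final check is that the resulting $S$ satisfies $S\overline S^T=\dim(\cD)I$ and that Verlinde gives nonnegative integer fusion coefficients.
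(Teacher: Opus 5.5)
Your scaffolding matches the paper's: the super-modular $\hat S$ block with the fermion sign flip giving $\pm B$, the earlier lemmas for most of $B$ and the first four rows of $C$, and $(W_{25})_j^*\in\{(W_{26})_1,(W_{26})_2\}$ read off from the $F(W_{21})$ entries. But the two steps that actually determine the unknowns do not work as written.

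\textbf{The entry $u_1$.} Orthogonality of the $(Z_{(1,1),(1,3)})_1$ row with the $F(Y_{(0,0)})$ row cannot force $u_1$ to be purely imaginary.
\begin{itemize}
\item The $F(Y_{(0,0)})$ row has the same entry $-\chi_6^2$ in all four columns $(W_{25})_j,(W_{26})_j$.
\item All other contributions cancel: in the $\hat S$ blocks because $\chi_6^3(\chi_{24}^5+1)=2(\chi_6^3)^2$, and in $B$ because the $\pm 4(\chi_6^3)^2$ terms cancel.
\item So this relation is just $\mathrm{Re}(u_1+u_2)=0$, which is empty once $u_2=-u_1$.
\end{itemize}
The Galois/integrality fallback does not repair this. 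The number $u_1/\chi_6^2$ is a cyclotomic integer all of whose conjugates have modulus $\sqrt3$, and there are inequivalent such numbers (e.g.\ $\sqrt{-3}$ and $1\pm\sqrt{-2}$, up to roots of unity). So integrality alone does not fix the phase.

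The missing relation is orthogonality of the $Z_1$ row with the row of its dual $Z_2$, i.e.\ $\sum_W S_{Z_1,W}^2=0$. With $u_2=-u_1$ this reads $8(\chi_6^3)^2+4\,\mathrm{Re}(u_1^2)=0$. Combined with the norm $|u_1|^2=2(\chi_6^3)^2$, it forces $\mathrm{Re}\,u_1=0$ and hence $u_1=\pm(3\sqrt2+2\sqrt3)i$, with no Galois input. The paper also gets $u_1+u_2=0$ purely from unitarity: the $F(W_3)$ row gives the real part and the $F(W_{21})$ row the imaginary part. Your sum-rule version instead needs the value of $S_{Z_{(1,1),(1,3)},W_{25}}$ in $\cZ(\cC)$, which you never supply.

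\textbf{The $(W_{25}),(W_{26})$ block.} The relations you list do not leave finitely many candidates.
\begin{itemize}
\item The norm of the $(W_{25})_1$ row, together with orthogonality against $F(\unit)$ and $F(W_3)$, yields only $|u_3|^2+|u_4|^2=8(\chi_6^2)^2$ and $\mathrm{Re}(u_3+u_4)=0$. That is two real equations in four real unknowns.
\item The sum rule adds a linear relation expressing $u_3+u_4$ through the parent entry $S_{W_{25},W_{25}}$. This is not $u_4=-\overline{u_3}$, which is an output of the computation, and it still leaves a one-parameter family.
\item Duality alone does not give $S_{(W_{25})_2,(W_{25})_2}=u_3$.
\item The balancing formula is circular here, since it needs the unknown multiplicities $N^Z_{(W_{26})_1,(W_{25})_1}$.
\item The relation $(ST)^3=p^+S^2$ is invoked without any computation.
\end{itemize}

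The paper closes the system with exactly the unitarity relations you postpone to a final check. Set $u_5:=S_{(W_{25})_2,(W_{25})_2}$ and $K=(\chi_6^2)^2$. Orthogonality of both lifts with $F(W_3)$ gives $u_3=a+bi$, $u_4=-a+ci$, $u_5=a+di$. Then:
\begin{itemize}
\item the norms of the two lift rows give $2a^2+b^2+c^2=8K$ and $b^2=d^2$;
\item orthogonality of $(W_{25})_1$ with its dual gives $2a^2-b^2-c^2=4K$;
\item orthogonality of $(W_{25})_1$ with the dual of $(W_{25})_2$ gives $2a^2+c(b+d)=8K$.
\end{itemize}
Hence $a^2=3K$, $d=b$ and $b=c=\pm\chi_6^2$. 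These quadratic relations, coming from orthogonality with dual rows, are the idea missing in both places.
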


\begin{proof} By Proposition \ref{W21W23}, we know that these simple objects $(W_{k})_j$ are not self-dual and $((W_{k})_j)^*\neq(W_k)_i$ for $k\in\{25,26\}$ and $i,j\in\{1,2\}$ . Without loss of generality, let $(W_{25})^*_1=(W_{26})_1$, $(W_{25})^*_2=(W_{26})_2$.

Let $u_1:=S_{(Z_{(1,1),(1,3)})_1,(W_{25})_1}$ and $u_2:=S_{(Z_{(1,1),(1,3)})_1,(W_{25})_2}$. Applying orthogonality conditions, we have the following equations:
\begin{itemize}
\item From $S_{(Z_{(1,1),(1,3)})_1,-}$  and $ S_{(Z_{(1,1),(1,3)})_2,-}$: $|u_1|^2+|u_2|^2=12(5+2\sqrt{6})$;
\item From $S_{(Z_{(1,1),(1,3)})_1,-}$ and $S_{F(W_{31}),-}$:  $u_1+u_2=-\overline{u_1+u_2}$;
\item From   $S_{(Z_{(1,1),(1,3)})_1,-}$ and $S_{F(W_{21}),-}$: $u_1+u_2=\overline{u_1+u_2}$;
\item From the  $S_{(Z_{(1,1),(1,3)})_1,-}$ and  $S_{(Z_{(1,1),(1,3)})_1,-}$: $2u_1^2+2\overline{u_1}^2+16(\chi^3_6)^2=0$;
\end{itemize}
Thus, we obtain $u_1=\pm(3\sqrt{2}+2\sqrt{3})i$.

Let $u_3:=S_{F(W_{25})_1,F(W_{25})_1}$, $u_4:=S_{F(W_{25})_1,F(W_{25})_2}$, $u_5:=S_{F(W_{25})_2,F(W_{25})_2}$. Again we apply orthogonality conditions, we have:
\begin{itemize}
    \item From  $S_{F(W_3),-}$ and $S_{(W_{25})_1,-}$: $u_3+u_4=-\overline{u_3+u_4}$;
    \item From   $S_{F(W_3),-}$ and $S_{(W_{25})_2,-}$: $u_4+u_5=-\overline{u_4+u_5}$.
    \item From $S_{(W_{25})_1,-}$ and $S_{(W_{26})_1,-}$,  $S_{(W_{25})_2,-}$ and $S_{(W_{26})_2,-}$:
$|u_3|^2+|u_4|^2=8(\chi^2_6)^2=|u_4|^2+|u_5|^2$;
\item From $S_{(W_{25})_1,-}$ and $S_{(W_{25})_2,-}$: $
16(\chi^2_6)^2+u_4(u_3+u_5)+\overline{u_4}( \overline{u_3} + \overline{u_5})=0;
$

\item From $S_{(W_{25})_1,-}$ and $S_{(W_{25})_1,-}$: 
$u_3^2+u_4^2+\overline{u_3}^2+\overline{u_4}^2=8(\chi^2_6)^2$;
\item From $S_{(W_{25})_2,-}$ and $S_{(W_{25})_2,-}$:
$u_4^2+u_5^2+\overline{u_4}^2+\overline{u_5}^2=8(\chi^2_6)^2$;
\end{itemize}
Assume $u_3=a+bi$, then $u_4=-a+ci$ and $u_5=a+di$ where $a,b,c,d\in\bbR$, then the  above equations implies 
\begin{align*}
2a^2+(b+d)c=8(\chi^2_6)^2,\;
4a^2-2b^2-2c^2=8(\chi^2_6)^2,\;
2a^2+b^2+c^2=8(\chi^2_6)^2.
\end{align*}
Thus $a=\pm\sqrt{3}\chi^2_6,\; b=c=d=\pm\chi^2_6$, and we  obtain  all the coefficients of the $S$-matrix. 
\end{proof}
\begin{cor}\label{nonselfdual-categorification}
  Let $\cC$ be a near-group fusion category of type $\bbZ/2\bbZ\times\bbZ/4\bbZ+8$, then $\cC\cong\cA^{\bbZ/2\bbZ}$ with $\cK_0(\cA)=R'(2)$.   
\end{cor}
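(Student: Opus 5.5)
The plan is to combine the preceding lemma, which gives $\cC\cong\cA^{\bbZ/2\bbZ}$ with $\cA$ of rank $6$, $G(\cA)=\bbZ/2\bbZ\times\bbZ/2\bbZ$, $\cK_0(\cA)$ commutative and simple dimensions $1,1,1,1,\chi_6^2,\chi_6^2$, with the classification of Sections \ref{section-selfdual} and \ref{section-nonself-dual}. The fusion ring is then pinned down using the modular data of $\cZ(\cA)$ computed in Theorem \ref{S-matrix}.

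\textbf{Step 1 (reduce to two candidates).} The two non-invertible simples of $\cA$ have the same dimension and form a single $G(\cA)$-orbit, so $\cA$ is $\bbZ/2\bbZ\times\bbZ/2\bbZ$-quadratic of rank six. Hence $\cK_0(\cA)$ is some $R(m,n)$ or $R'(m)$.
\begin{itemize}
\item In the self-dual case, $d=\frac12\bigl(m+n+\sqrt{8+(m+n)^2}\bigr)=2+\sqrt6$ forces $m+n=4$. Proposition \ref{notzero} excludes $(0,4)$ and $(4,0)$, and Theorem \ref{thm:PseudounitaryBoundselfdual} (equivalently Remark \ref{rem:K2abBetaConditions}) excludes $(1,3)$ and $(3,1)$. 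This leaves only $R(2,2)$.
\item In the non-self-dual case, $d=m+\sqrt{2+m^2}=2+\sqrt6$ gives $m=2$, i.e.\ $R'(2)$.
\end{itemize}
The formal codegrees do not separate these candidates: both are $24\pm8\sqrt6,8,8,4,4$. So the remaining task is to decide whether $X$ is self-dual.

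\textbf{Step 2 (duality via characters).} For a commutative fusion ring, $X^*\neq X$ exactly when some character takes a non-real value on $X$. In $R(2,2)$ all structure constants are symmetric and every simple object is self-dual, so all characters are real-valued. In $R'(2)$ the two characters with codegree $8$ are complex conjugate to each other.

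By \cite[Theorem 2.13]{ostrik2015pivotal}, the characters $\chi$ of $\cK_0(\cA)$ correspond to the simple summands $A_\chi$ of $\cI(\unit_\cA)$. The complex conjugate character $\bar\chi=\chi\circ(-)^*$ corresponds to the dual object $A_\chi^*$. So I will show that the two summands of $\cI(\unit_\cA)$ of dimension $\dim(\cA)/8=3+\sqrt6=\chi_6^3$ are mutually dual and not self-dual. This forces $\bar\chi_3=\chi_4\ne\chi_3$, so $X$ is not self-dual and $\cK_0(\cA)=R'(2)$.

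\textbf{Step 3 (locate these objects in $\cZ(\cA)$).} I expect this step to be the main obstacle. Since $\cC=\cA^{\bbZ/2\bbZ}$, we have $\cZ(\cA)\simeq\cZ(\cC)^0_{\bbZ/2\bbZ}\simeq\cD\boxtimes\cC(\bbZ/2\bbZ,q)$. Here $\cC(\bbZ/2\bbZ,q)$ is pointed modular of dimension $2$, so its nontrivial simple $s$ has twist $\pm i$.

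The summands of $\cI(\unit_\cA)$ have trivial twist by \cite[Theorem 2.5]{ostrik2015pivotal}. From the table of dimensions and twists for $\cD$, the only simples of dimension $\chi_6^3$ are $(Z_{(1,1),(1,3)})_{1,2}$, with twist $i$, and $(Z_{(0,1),(0,3)})_{1,2}$, with twist $-i$. After tensoring with $\unit$ or $s$, the only twist-$1$ objects of dimension $\chi_6^3$ are the pair $(Z_{(1,1),(1,3)})_{1,2}\boxtimes s$ or the pair $(Z_{(0,1),(0,3)})_{1,2}\boxtimes s$, depending on the sign of $\theta_s$. In either case, $A_3$ and $A_4$ are the two members of one such pair.

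By Theorem \ref{S-matrix}, the entries $S_{(Z_{(1,1),(1,3)})_1,-}$ include the non-real values $(\sqrt2 i-1)\chi_6^3$ and $u_1$. So these objects are not self-dual, and, as used in the proof of Proposition \ref{W21W23}, $((Z_{(1,1),(1,3)})_1)^*=(Z_{(1,1),(1,3)})_2$. The same holds for the $(0,1),(0,3)$ pair by the $\mathrm{sVec}$-symmetry of $\hat S$. Since $s$ is self-dual, $A_3^*=A_4\neq A_3$, which completes Step 2.

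The delicate points are checking that no other simple of $\cD$ can contribute an object of dimension $\chi_6^3$ with trivial twist, and confirming that the dual pairing read off from $S$ is the one stated. I would check the first by going through the table once more, and the second by testing $S_{V,W}=\overline{S_{V^*,W}}$ against the rows of $\hat S$.
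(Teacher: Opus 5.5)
Your argument is correct, but it detects non-self-duality by a different mechanism than the paper.

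\textbf{How the paper argues.} The paper never reduces to the two candidates $R(2,2)$ and $R'(2)$. Instead it:
\begin{itemize}
\item reads off $\cI(\unit_\cA)$ from twists and dimensions;
\item checks that the images under $\cF$ of $f:=F(X_{(1,0)})$, $b$ (your $s$) and $b\boxtimes f$ are distinct non-trivial invertibles;
\item notes that $\cF((W_{25})_1)$ is a non-invertible simple;
\item uses the full $S$-matrix of Theorem \ref{S-matrix} and the Verlinde formula to get $(W_{25})_1^{\otimes 2}=(Z_{(0,1),(0,3)})_1\oplus F(W_1)\oplus (Z_{(1,1),(1,3)})_2$.
\end{itemize}
Hence $\cF((W_{25})_1)^{\otimes 2}$ contains two non-trivial invertibles and no unit, which is the pattern of $R'(m)$.

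\textbf{How you argue, and what each route buys.} You move the question to the summands of $\cI(\unit_\cA)$ via the character dictionary. So you need only the twist/dimension table and the non-self-duality of the $\chi_6^3$-dimensional simples. That is already visible in the $\cD_0$-block: $\hat S_{33}=(\sqrt2 i-1)\chi_6^3\notin\bbR$, and a dual has the same dimension and twist. In particular you never touch the $C$-block entries $u_3,u_4$, which Theorem \ref{S-matrix} fixes only up to sign. In exchange, the paper's route tells you which simple of $\cZ(\cA)$ lifts $X$ and what its square is.

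You could also drop the appeal to Proposition \ref{notzero} and Remark \ref{rem:K2abBetaConditions}. There are exactly six twist-one simples in $\cZ(\cA)$, of total dimension $24+8\sqrt6$, so $\cI(\unit_\cA)$ is their sum. The non-self-dual pair among them then already forces a non-real character.

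\textbf{Two points you should make explicit.}

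First, \cite[Theorem 2.13]{ostrik2015pivotal}, as quoted in the paper, gives only $[\cI(\unit),A_\chi]=\dim\chi$ and $f_\chi=\dim\cA/\dim A_\chi$. Since $\chi_3$ and $\chi_4$ have the same codegree, this does not say which of $A_3,A_4$ goes with which, let alone that $A_{\bar\chi}=A_\chi^*$. You need the finer description: $\chi\circ\cF$ is the character $W\mapsto S_{W,A_\chi}/\dim A_\chi$ of $\cK_0(\cZ(\cA))$. Because dimensions are positive, $\overline{S_{W,A}}=S_{W^*,A}=S_{W,A^*}$, so $\bar\chi\circ\cF$ is the character attached to $A_\chi^*$. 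In particular $\chi=\bar\chi$ would force $A_\chi=A_\chi^*$, which is exactly what you use.

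Second, you take $G(\cA)\cong\bbZ/2\bbZ\times\bbZ/2\bbZ$ and the single-orbit property from the preceding lemma. That lemma's proof only establishes the rank, the dimensions and commutativity; the Klein four-group is actually proved inside the paper's proof of this corollary. Both facts are quick in your setup:
\begin{itemize}
\item The objects $f\boxtimes\unit$, $\unit\boxtimes s$ and $f\boxtimes s$ have twists $-1$, $\pm i$ and $\mp i$. So none is a summand of $\cI(\unit_\cA)$, and $\cF$ embeds this Klein four-group into $G(\cA)$.
\item If $G(\cA)$ fixed $X$, then $X\otimes X^*$ would contain all four invertibles. This gives $4+k(2+\sqrt6)=10+4\sqrt6$ for some integer $k$, which is impossible.
\end{itemize}
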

\begin{proof}
Let $\cF:\cZ(\cA)\to\cA$ be the forgetful functor and $\cI$ being the induction functor. Note $\theta_{\cI(\unit_\cA)}=1$,  by comparing the ribbons  and dimensions of simple objects, it follows from \cite[Theorem 2.13]{ostrik2015pivotal} that
\begin{align*}
\cI(\unit_\cA)=\unit\boxtimes\unit\oplus \unit\boxtimes F(Y_{(0,0)})\oplus \unit\boxtimes F(Z_{(0,0),(1,0)})\oplus \unit\boxtimes F(Z_{(0,0),(1,2)})\oplus b\boxtimes (Z_{(0,1),(0,3)})_1\oplus b\boxtimes (Z_{(0,1),(0,3)})_2,
\end{align*}
where $b$ is the unique non-trivial simple objects of $\cC(\bbZ/2\bbZ,q)$. Denote by $f:=F(X_{(1,0)})$. Then $\cF(f)\neq\cF(b)$, $\cF(f)\neq\unit_\cA$ and $\unit_\cA\neq \cF(b)$, otherwise $\cF(b\boxtimes f)=\unit_\cA$, then $b\boxtimes f$ is a direct summands of $\cI(\unit_\cA)$,  it is impossible. Hence $\cA$ contains $\text{Vec}_{\bbZ/2\bbZ\times \bbZ/2\bbZ}^\omega$ as fusion subcategory for some $3$-cocycle $\omega$. Since  the dimensions of simple objects of $\cA$ are either $1$ or $\chi_6^2$, $\cF((W_{25})_1)$ must be a simple object. By using the $S$-matrix obtainded in Theorem \ref{S-matrix},  the Verlinde formula (\ref{verlinde}) implies
\begin{align*}
 (W_{25})_1 \otimes (W_{25})_1 = (Z_{(0,1),(0,3)})_1 \oplus F(W_1) \oplus (Z_{(1,1),(1,3)})_2,    
\end{align*}
hence $\cF((W_{25})_1) \otimes \cF((W_{25})_1)$ contains exactly two non-trivial invertible  objects, which implies $\cF((W_{25})_1)$ is not self-dual. Therefore, $\cA$ is a $\bbZ/2\bbZ\times\bbZ/2\bbZ$-quadratic category of rank six with $\cK_0(\cA)=R'(2)$ by comparing the dimension of simple objects.
\end{proof}

 It is easy to see that  the fusion rings $R(2,2)$, $R(4,4)$ and $R'(4)$  satisfy $d$-number test, cyclotomic field test and pseudo-unitary inequality \cite{ostrik2009formal,ostrik2015pivotal}, we don't know whether these fusion rings are categorifiable, however. We suspect that these three fusion rings are related to near-group fusion categories if they are categorifiable,  and we end this section with the following conjecture.
\begin{conjecture} 
Suppose that $\cC$ is a  $\bbZ/2\bbZ\times\bbZ/2\bbZ$-quadratic fusion category of rank six with the fusion rules determined by \ref{selfdual} or \ref{notselfdual}.
Then $(m,n)\in\{(0,0),(1,0),(0,1),(2,0),(0,2),(2,2),(4,4),(4,6),(6,4)\}$ if $\cC$ is self-dual and
$(m,m)\in\{(0,0),(2,2),(4,4)\}$ if $\cC$ is not self-dual.
 \end{conjecture}
 
\section*{Acknowledgements}
 Z. Yu thanks A. Schopieray for suggesting this topic and communications on modular fusion categories. The authors are grateful to Q. Zhang for help in computing the $S$-matrix of   Drinfeld center of near-group fusion category of type $\bbZ/2\bbZ \times \bbZ/4\bbZ+8$. Z.Yu was supported by NSFC (no. 12571041) and  Qinglan Project of Yangzhou University.

\author{Yue Meng\\ \thanks{Email:\,DX120250050@stu.yzu.edu.cn}
\\{\small School  of Mathematics,  Yangzhou University,
Yangzhou 225002, China}
}\\

\noindent \author{Zhiqiang Yu\\ \thanks{Email:\,zhiqyumath@yzu.edu.cn}
\\{\small School  of Mathematics,  Yangzhou University,
Yangzhou 225002, China}
}

\end{document}